\title{The Podle\'s sphere as a spectral metric space}
\author{Konrad Aguilar and Jens Kaad}
\address{School of Mathematical and Statistical Sciences, 
Arizona State University, 
901 S. Palm Walk, Tempe, AZ 85287-1804}
\email{konrad.aguilar@asu.edu}
\address{Department of Mathematics and Computer Science,
The University of Southern Denmark,
Campusvej 55, DK-5230 Odense M,
Denmark}
\email{jenskaad@hotmail.com}
\subjclass[2010]{58B32,58B34;46L89,46L30} 
\keywords{Quantum metric spaces, Lip-Norms, Podle\'s sphere, spectral triples}
\begin{document}

\begin{abstract}
We study the spectral metric aspects of the standard Podle\'s sphere, which is a homogeneous space for quantum $SU(2)$. The point of departure is the real equivariant spectral triple investigated by D\polhk{a}browski and Sitarz. The Dirac operator of this spectral triple interprets the standard Podle\'s sphere as a $0$-dimensional space and is therefore not isospectral to the Dirac operator on the $2$-sphere. We show that the seminorm coming from commutators with this Dirac operator provides the Podle\'s sphere with the structure of a compact quantum metric space in the sense of Rieffel.
%
\end{abstract}

\maketitle

\tableofcontents

\section{Introduction}
In the work of Podle\'{s} a two parameter family of deformations of the $2$-sphere is introduced and interpreted as homogeneous spaces for Woronowicz' quantum group versions of the Lie group $SU(2)$, \cite{Podles87,Woro87}. Since these Podle\'s spheres are deformations of a $2$-sphere attention has been given to the interpretation of these objects within the framework of noncommutative differential geometry, \cite{DaSi03,DALW07,NeTu05,ScWa04,ReSe14,KrWa13}. An important aspect of this investigation is to understand the topology on the state space of the Podle\'s sphere in question. Following the program of Rieffel on  compact quantum  metric spaces, candidates for metrics on the state space come from seminorms on Podle\'s sphere and the question is whether the topology on the state space coming from this kind of metrics agrees with the weak $*$-topology, \cite{Rieffel98a}. Interesting examples of seminorms arise from commutators with an unbounded operator acting on a Hilbert space and this is what constitutes the link between Rieffel's ideas and Connes' program on noncommutative differential geometry, \cite{Con96, Connes}. Indeed, the unbounded operator in question might very well be the unbounded operator of a spectral triple, \cite{Connes89, Connes}, see also \cite{Bellissard10} where this kind of compact quantum metric spaces are called spectral metric spaces.

We are in this paper investigating the seminorm coming from a particular spectral triple over the standard Podle\'s sphere $S_q^2$. This spectral triple was analyzed in detail by D\polhk{a}browski and Sitarz and it has the property of being real and equivariant with respect to the coaction of quantum $SU(2)$, \cite{DaSi03}. The unbounded operator appearing in this spectral triple does not have the same spectrum as the Dirac operator on the $2$-sphere, indeed the corresponding spectral dimension of the standard Podle\'s sphere is $0$. In some sense this is the correct spectral dimension since the standard Podle\'s sphere (for $q \in (0,1)$) is isomorphic to the unitization of the compact operators on a separable Hilbert space.

Our main result is that the seminorm $L$ coming from the D\polhk{a}browski-Sitarz spectral triple over the standard Podle\'s sphere provides a metric on the state space and that this metrizes the weak $*$-topology. In other words, we prove that $(S_q^2,L)$ is a compact quantum metric space in the sense of Rieffel. We emphasize that there are plenty of explicit seminorms on $S_q^2$ that yields the right topology on the state space, indeed since we are just working with the unitization of the compacts, it suffices to ensure that the elements in the corresponding unit ball are of sufficiently rapid decay. An important point is that our seminorm comes from a particular unbounded operator, which is interesting from the point of view of noncommutative differential geometry.

A natural question to address in the future is whether the standard Podle\'s spheres converge to the $2$-sphere in the quantum Gromov-Hausdorff propinquity as introduced by Latr\'emoli\`ere, \cite{Latremoliere13}. Our work is a necessary first step in this direction. The convergence question for the standard Podle\'s spheres is complementary to Rieffel's work on matrix algebras converging to the $2$-sphere, \cite{Rieffel01, Rieffel15}. Indeed, the fuzzy spheres also form a family of Podle\'s spheres but for a different deformation parameter.

We end this introduction by giving a few comments on our methods. The approach that we are following is motivated by the geometry of the classical $2$-sphere. The circle action given by rotation of the $2$-sphere around the axis from the south pole to the north pole survives under the $q$-deformation. The corresponding fixed point algebra is a commutative unital $C^*$-subalgebra of $S_q^2$ and it is given by the continuous functions on the subspace $\{q^{2k} \mid k \in \nn \cup \{0\} \} \cup \{0\} \su \cc$, which one may naturally think of as a quantized version of the unit interval $[0,1]$. In the classical case, thus when $q = 1$, the fiber over each point $t \in [0,1]$ is the unit circle except at $0$ and $1$ where the fiber reduces to a single point. This picture can also be transferred to the $q$-deformed case, but the fiber over each point is now (due to the discrete nature of the base space) a separable Hilbert space. Interestingly enough, the singularity at $t = 1$ in the classical case blows up and we also see an infinite dimensional Hilbert space as the fiber over the point $q^0 = 1$. 

Our approach consists of analyzing each of these fibers separately, paying particular attention to the interplay between the Hilbert space norm and the seminorm $L$. In order to then conclude anything about the behaviour of the whole standard Podle\'s sphere we prove a version of the fundamental theorem of analysis for the derivation $d_q = [D_q, \cd]$ coming from the D\polhk{a}browski-Sitarz spectral triple. To explain what we mean by this, we let $\psi_\infty : S_q^2 \to \cc$ denote the state which projects onto the subspace generated by the unit. We then construct a bounded operator
\[
\int : X \to S_q^2
\]
(on a suitable operator space $X$) such that
\[
\int d_q(x) = x - \psi_\infty(x) \cd 1_{S_q^2}
\]
for all elements $x$ in the Lipschitz algebra associated to the D\polhk{a}browski-Sitarz spectral triple.

Remark that, as a byproduct of our results, we obtain a metric on each of the quantized unit intervals $\{q^{2k} \mid k \in \nn \cup \{0\}\}$, $q \in (0,1)$, and it would be interesting to compute this metric explicitly.

\subsection*{Acknowledgements}
%

We gratefully acknowledge the Syddansk Universitet Odense, Arizona State University, and the Institute of Mathematics of the Polish Academy of Sciences (IMPAN) for facilitating this collaboration. Both authors were partially sponsored by the grant H2020-MSCA-RISE-2015-691246-QUANTUM DYNAMICS and Polish Ministry of Science and Higher Education grant Polish Government grant \#3542/H2020/2016/2 during their visit at IMPAN in January 2018.

This project was initiated during the Great Plains Operator Theory Symposium (GPOTS) at the Texas Christian University in May 2017.
%
%
%

The second author was partially supported by the DFF-Research Project 2 ``Automorphisms and Invariants of Operator Algebras'', no. 7014-00145B and by the Villum Foundation (grant 7423).

\section{Preliminaries on spectral metric spaces}\label{s:prelim}

\subsection{Derivations and seminorms from unbounded operators}\label{ss:derivation}
Throughout this subsection we fix a densely defined unbounded \emph{selfadjoint} operator $D : \T{Dom}(D) \to H$ acting on a separable Hilbert space $H$, together with a $C^*$-algebra $A$ represented on $H$ via a $*$-homomorphism $\pi : A \to \B L(H)$. Usually $\pi$ will be faithful but this is not a requirement in the following. Both the norm on $A$ and on $H$ will be denoted by $\| \cd \|$, but the operator norm will be denoted by $\| \cd \|_\infty$.

\begin{dfn}\label{d:l-algebra}
The \emph{Lipschitz algebra} (associated to $(\pi,D)$) is the $*$-subalgebra $\T{Lip}_D(A) \subseteq A$ defined by
\[
\begin{split}
\T{Lip}_D(A) := 
\big\{ x \in A 
& \mid \pi(x)\big( \T{Dom}(D) \big) \su \T{Dom}(D) \, , \, \, [D, \pi(x)] : \T{Dom}(D) \to H \\
& \QQ \T{ has a bounded extension to }H \big\}  .
\end{split}
\]
For $x \in \T{Lip}_D(A)$ we denote the bounded extension of $[D,\pi(x)] : \T{Dom}(D) \to H$ by $d(x) \in \B L(H)$.
\end{dfn}

We remark that the assignment $d : \T{Lip}_D(A) \to \B L(H)$ is a closed derivation, satisfying the relation
\[
d(x^*) = - d(x)^* \Q x \in \T{Lip}_D(A)  .
\]

\begin{dfn}\label{d:l}
Define $L_D : A \to [0,\infty]$ by the formula
\[
L_D(x) := \sup_{ \xi,\eta \in \T{Dom}(D) \, , \, \, \| \xi \|, \| \eta \| = 1} \big| \inn{ \xi, \pi(x^*) D \eta} - \inn{\pi(x) D \xi, \eta} \big|
\]
\end{dfn}

We record the following fundamental result, which is nonetheless a bit difficult to find anywhere:

\begin{lemma}\label{l:domain}
Let $x \in A$. We have that
\[
\big( L_D(x) < \infty \big) \lrar \big( x \in \T{Lip}_D(A) \big)  .
\]
In this case 
\begin{equation}\label{eq:semi}
L_D(x) = \| d(x) \|_\infty  .
\end{equation}
\end{lemma}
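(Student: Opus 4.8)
The plan is to recast $L_D(x)$ as the operator norm of a sesquilinear form and then to use the selfadjointness of $D$, via $\T{Dom}(D^*) = \T{Dom}(D)$, to control domains. For $x \in A$, introduce the sesquilinear form $B_x$ on $\T{Dom}(D) \times \T{Dom}(D)$ defined by $B_x(\xi,\eta) := \inn{\xi, \pi(x^*) D\eta} - \inn{\pi(x) D\xi, \eta}$, so that $L_D(x)$ is exactly the norm of $B_x$ (understood to be $\infty$ when $B_x$ is unbounded). Since $\pi$ is a $*$-homomorphism into $\B L(H)$, the first term rewrites as $\inn{\pi(x)\xi, D\eta}$.

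For the implication $x \in \T{Lip}_D(A) \Rightarrow L_D(x) < \infty$ together with \eqref{eq:semi}: when $\pi(x)$ preserves $\T{Dom}(D)$, both $\pi(x)\xi$ and $\eta$ lie in $\T{Dom}(D)$, so selfadjointness of $D$ gives $\inn{\pi(x)\xi, D\eta} = \inn{D\pi(x)\xi, \eta}$, whence $B_x(\xi,\eta) = \inn{[D,\pi(x)]\xi, \eta} = \inn{d(x)\xi, \eta}$ for all $\xi,\eta \in \T{Dom}(D)$. Since $d(x) \in \B L(H)$ and $\T{Dom}(D)$ is dense, taking the supremum over unit vectors $\xi, \eta \in \T{Dom}(D)$ yields $L_D(x) = \| d(x) \|_\infty$ at once.

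For the converse, assume $L_D(x) < \infty$, i.e.\ $|B_x(\xi,\eta)| \le L_D(x) \cd \| \xi \| \cd \| \eta \|$ for all $\xi, \eta \in \T{Dom}(D)$. Fix $\xi \in \T{Dom}(D)$. From $\inn{\pi(x)\xi, D\eta} = B_x(\xi,\eta) + \inn{\pi(x) D\xi, \eta}$ one obtains $|\inn{\pi(x)\xi, D\eta}| \le \big( L_D(x) \| \xi \| + \| \pi(x) \|_\infty \| D\xi \| \big) \cd \| \eta \|$, so the linear functional $\eta \mapsto \inn{D\eta, \pi(x)\xi}$ is bounded on $\T{Dom}(D)$. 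By definition of the adjoint and the hypothesis $D = D^*$, this forces $\pi(x)\xi \in \T{Dom}(D^*) = \T{Dom}(D)$; hence $\pi(x)$ preserves $\T{Dom}(D)$. Now the identity $B_x(\xi,\eta) = \inn{[D,\pi(x)]\xi, \eta}$ from the previous paragraph is legitimate, and letting $\eta$ range over unit vectors of the dense subspace $\T{Dom}(D)$ gives $\| [D,\pi(x)]\xi \| \le L_D(x) \| \xi \|$. Thus $[D,\pi(x)]$ admits a bounded extension, so $x \in \T{Lip}_D(A)$, and combining with the first part closes the argument, equality \eqref{eq:semi} included.

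I expect the one genuinely delicate step to be the passage from ``$\pi(x)\xi$ pairs boundedly against $D\big(\T{Dom}(D)\big)$'' to ``$\pi(x)\xi \in \T{Dom}(D)$'': this is precisely the point where selfadjointness of $D$, rather than mere symmetry, is used, through $\T{Dom}(D^*) = \T{Dom}(D)$. Everything else amounts to routine manipulations with the $C^*$-algebraic structure of $\pi$ and with the density of $\T{Dom}(D)$ in $H$.
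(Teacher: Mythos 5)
Your argument is correct and follows essentially the same route as the paper: both directions hinge on rewriting the form as $\inn{[D,\pi(x)]\xi,\eta}$ once $\pi(x)$ is known to preserve $\T{Dom}(D)$, and the converse uses the bound on $\eta \mapsto \inn{\pi(x)\xi, D\eta}$ to place $\pi(x)\xi$ in $\T{Dom}(D^*) = \T{Dom}(D)$, exactly as in the paper's proof. You in fact spell out the concluding step (deducing $\|[D,\pi(x)]\xi\| \le L_D(x)\|\xi\|$ from density) which the paper leaves to the reader.
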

\begin{proof}
Suppose that $x \in \T{Lip}_D(A)$. Then we have that
\[
\begin{split}
L_D(x) & = \sup_{ \xi,\eta \in \T{Dom}(D) \, , \, \, \| \xi \|, \| \eta \| = 1} \big| \inn{ \xi, \pi(x^*) D \eta} - \inn{\pi(x) D \xi, \eta} \big| \\
& = \sup_{ \xi,\eta \in \T{Dom}(D) \, , \, \, \| \xi \|, \| \eta \| = 1} \big| \binn{ [D,\pi(x)] \xi, \eta} \big| \\
& = \| d(x) \|_\infty  ,
\end{split}
\]
proving the implication ``$\lar$'' and the identity in Equation \eqref{eq:semi}.

Suppose that $L_D(x) < \infty$. For each $\xi, \eta \in \T{Dom}(D)$, define the complex number 
\[
\varphi_{\xi,\eta}(x) := \inn{ \xi, \pi(x^*) D \eta} - \inn{\pi(x) D \xi, \eta}  .
\]
Remark that $| \varphi_{\xi,\eta}(x) | \leq L_D(x) \cd \| \xi \| \cd \| \eta \|$. We thus have that
\[
\big| \inn{\pi(x) \xi, D \eta} \big| = \big| \varphi_{\xi,\eta}(x) + \inn{\pi(x) D \xi, \eta} \big|
\leq L_D(x) \cd \| \xi \| \cd \| \eta \| + \| x \| \cd \| D \xi \| \cd \| \eta \|  ,
\]
for all $\xi, \eta \in \T{Dom}(D)$. But this implies that $\pi(x) \xi \in \T{Dom}(D^*)$ for all $\xi \in \T{Dom}(D)$ and hence by selfadjointness of $D$ that $\pi(x)\big( \T{Dom}(D) \big) \subseteq \T{Dom}(D)$. The rest of the proof should now be clear.
\end{proof}

The following result is immediate:

\begin{lemma}\label{l:Leibniz}
The assignment $L_D : A \to [0,\infty]$ is lower semi-continuous and defines a seminorm on $\T{Lip}_D(A)$. Moreover, $L_D$ is $*$-invariant and Leibniz in the sense that:
\[
\begin{split}
L_D(x^*) = L_D(x) \Q \mbox{and}\Q
L_D(x y) \leq L_D(x) \cd \| y \| + \| x \| \cd L_D(y) 
\end{split}
\]
for all $x,y \in A$.
\end{lemma}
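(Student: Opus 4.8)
The plan is to reduce everything except lower semi-continuity to properties of the bounded derivation $d : \T{Lip}_D(A) \to \B L(H)$ by means of Lemma \ref{l:domain}, which says that $L_D$ is finite exactly on $\T{Lip}_D(A)$ and agrees there with $\| d(\cd) \|_\infty$. Throughout, for unit vectors $\xi, \eta \in \T{Dom}(D)$ I use the notation $\varphi_{\xi,\eta}(x) = \inn{\xi, \pi(x^*) D \eta} - \inn{\pi(x) D \xi, \eta}$ from the proof of Lemma \ref{l:domain}, so that $L_D(x) = \sup_{\xi, \eta} | \varphi_{\xi,\eta}(x) |$, and I repeatedly appeal to the facts established in and following Definition \ref{d:l-algebra} that $\T{Lip}_D(A)$ is a $*$-subalgebra of $A$ and that $d$ is a derivation with $d(x^*) = - d(x)^*$, as well as to the contractivity $\| \pi(x) \|_\infty \leq \| x \|$ of the $*$-homomorphism $\pi$.

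For \emph{lower semi-continuity} I would work directly on $A$. For fixed unit vectors $\xi, \eta \in \T{Dom}(D)$ the estimates $| \inn{\pi(x) D\xi, \eta} | \leq \| x \| \cd \| D\xi \|$ and $| \inn{\xi, \pi(x^*) D\eta} | \leq \| x \| \cd \| D\eta \|$ show that $x \mapsto \varphi_{\xi,\eta}(x)$, hence also $x \mapsto | \varphi_{\xi,\eta}(x) |$, is norm-continuous on $A$. Since a pointwise supremum of continuous $[0,\infty)$-valued functions is lower semi-continuous, $L_D$ is lower semi-continuous.

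For the \emph{seminorm} property, Lemma \ref{l:domain} identifies $L_D$ restricted to $\T{Lip}_D(A)$ with the composite of the $\cc$-linear map $d$ and the norm $\| \cd \|_\infty$ on $\B L(H)$, which is a seminorm. (Alternatively, each $\varphi_{\xi,\eta}$ is $\cc$-linear in $x$, so $L_D$ is absolutely homogeneous and subadditive as a $[0,\infty]$-valued function, and one then restricts to the finiteness domain $\T{Lip}_D(A)$.) For \emph{$*$-invariance}: if $x \in \T{Lip}_D(A)$ then also $x^* \in \T{Lip}_D(A)$, and Lemma \ref{l:domain} together with $d(x^*) = - d(x)^*$ gives $L_D(x^*) = \| d(x^*) \|_\infty = \| d(x)^* \|_\infty = \| d(x) \|_\infty = L_D(x)$; if $x \notin \T{Lip}_D(A)$ then $x^* \notin \T{Lip}_D(A)$ either (otherwise $x = (x^*)^* \in \T{Lip}_D(A)$), so $L_D(x^*) = \infty = L_D(x)$.

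The \emph{Leibniz inequality} is the one step requiring a little care, since a direct manipulation of $\varphi_{\xi,\eta}(xy)$ would force one to apply $D$ to vectors such as $\pi(y)\xi$, which need not lie in $\T{Dom}(D)$; I would therefore route the argument through $d$. If $L_D(x) = \infty$ or $L_D(y) = \infty$, then either the complementary norm ($\| y \|$, respectively $\| x \|$) is nonzero and the right-hand side is $\infty$, or it vanishes, in which case the corresponding element vanishes, $xy = 0$, and both sides are $0$. Otherwise $x, y \in \T{Lip}_D(A)$, whence $xy \in \T{Lip}_D(A)$ with $d(xy) = d(x)\pi(y) + \pi(x) d(y)$, and Lemma \ref{l:domain} yields
\[
L_D(xy) = \| d(xy) \|_\infty \leq \| d(x) \|_\infty \cd \| \pi(y) \|_\infty + \| \pi(x) \|_\infty \cd \| d(y) \|_\infty \leq L_D(x) \cd \| y \| + \| x \| \cd L_D(y) .
\]
The only genuine obstacle is thus the domain subtlety just flagged, which is precisely why the Leibniz estimate is handled at the level of the bounded operator $d(xy)$ rather than at the level of the forms $\varphi_{\xi,\eta}$.
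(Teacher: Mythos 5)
Your proof is correct, and since the paper simply declares this lemma immediate without supplying any argument, what you have written is exactly the standard verification the authors have in mind: lower semi-continuity as a supremum of continuous functions, and the remaining properties routed through Lemma \ref{l:domain} and the derivation identities $d(x^*)=-d(x)^*$ and $d(xy)=d(x)\pi(y)+\pi(x)d(y)$, with the infinite cases of the Leibniz rule handled separately. The only nitpick is your parenthetical claim that each $\varphi_{\xi,\eta}$ is $\cc$-linear in $x$: with the paper's convention (inner products conjugate-linear in the first slot, as in $\inn{x,y}:=h(x^*y)$) it is conjugate-linear, but this is immaterial since $|\varphi_{\xi,\eta}|$ is still absolutely homogeneous and subadditive, and your principal argument does not rely on it.
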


\subsection{Spectral metric spaces}
We recall the main definition in the theory of compact quantum metric spaces:

\begin{dfn}[{\cite{Rieffel98a, Rieffel99, Rieffel05}}]\label{d:Lcqms}
Let $A$ be a unital $C^*$-algebra and let $L: A \rightarrow [0,\infty]$ be a seminorm.
We call the pair $(A, L)$  a {\em compact quantum metric space} and $L$ a {\em Lip-norm} if the following hold:
\begin{enumerate}
\item the domain, $\T{dom}(L):=\{ a \in A \mid L(a) < \infty \}$ is dense in $A$,
\item  $L$ is $*$-invariant and lower semi-continuous on $A$,
\item the kernel $\T{ker} L:=\{ a \in A \mid L(a)=0\}=\cc1_A$, where $1_A$ is the unit of $A$,
\item the {\em Monge-Kantorovich metric} defined on the state space of $A$ by
\begin{equation*}
mk_L (\mu,\nu):=\sup \{ |\mu(a)-\nu(a)| \mid a \in A, L(a) \leq1 \} \Q \T{ for all } \mu, \nu \in \T{S}(A)
\end{equation*}
metrizes the weak $*$-topology.
\end{enumerate}
If furthermore $L$ is a {\em Leibniz seminorm}, i.e. for any $a,b \in A$, we have
\begin{equation*}
 L(ab)  \leq L(a) \cd \|b\| + \|a\| \cd L(b),
\end{equation*}
 then we call
$(A, L)$  a {\em Leibniz compact quantum metric space} and $L$ a {\em Leibniz Lip-norm}.
\end{dfn}

The usual way of proving that a pair $(A,L)$ consisting of a unital $C^*$-algebra and a seminorm is a compact quantum metric space is by applying the following theorem due to Rieffel:
\begin{thm}[{\cite[Theorem 1.8 and Theorem 1.9]{Rieffel98a}}, {\cite[Proposition 1.3]{Ozawa05}}]\label{t:Rieffel}
Let $A$ be a unital $C^*$-algebra and let $L:A \rightarrow [0, \infty]$ be a lower semi-continuous $*$-invariant  seminorm.

If $\T{dom}(L)$ is dense in $A$, and  $\T{ker} L=\cc1_A$, then the following conditions are equivalent:
\begin{enumerate}
\item the pair $(A,L)$ is a  compact quantum metric space; 
\item there exists a state $\mu \in \T{S}(A)$ such that the set: 
\begin{equation*}
\{ a\in \T{dom}(L) \mid L (a) \leq 1 \text{ and } \mu(a) = 0 \}
\end{equation*}
is totally bounded for the $C^*$-norm on $A$.
\end{enumerate}
\end{thm}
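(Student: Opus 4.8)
The plan is to establish the two implications separately, and the whole argument hinges on one elementary reduction. Since $\ker L=\mathbb{C}1_A$, and hence $L(1_A)=0$, any $b\in A$ with $L(b)\le 1$ may be normalised by setting $a:=b-\mu(b)1_A$: then $a\in\mathrm{dom}(L)$ with $L(a)\le 1$, one has $\mu(a)=0$, and $\rho(b)-\nu(b)=\rho(a)-\nu(a)$ for all states $\rho,\nu$. Writing $\mathcal{B}_\mu:=\{a\in\mathrm{dom}(L)\mid L(a)\le 1,\ \mu(a)=0\}$ this gives
\[
mk_L(\rho,\nu)=\sup\{\, |\rho(a)-\nu(a)| \mid a\in\mathcal{B}_\mu \,\}\qquad(\rho,\nu\in S(A)),
\]
so that the metric $mk_L$ is entirely controlled by the single set $\mathcal{B}_\mu$. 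A second reduction I would use throughout is to self-adjoint elements: if $a\in\mathcal{B}_\mu$ then, by $*$-invariance of $L$ and since $\mu(a)=0$, the real and imaginary parts of $a$ again lie in $\mathcal{B}_\mu$, so it is enough to understand $\mathcal{B}_\mu\cap A_{\mathrm{sa}}$.

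For the implication $(2)\Rightarrow(1)$, I would first observe that total boundedness of $\mathcal{B}_\mu$ forces it to be norm-bounded, so $mk_L$ is finite-valued; it is manifestly a pseudometric, and it separates states, because $mk_L(\rho,\nu)=0$ forces $\rho=\nu$ on the dense set $\mathrm{dom}(L)$ and hence on $A$. Thus $(S(A),mk_L)$ is a genuine metric space. Next I would show that the identity map from $S(A)$, equipped with the weak $*$-topology, to $S(A)$, equipped with the $mk_L$-metric topology, is continuous: given a net $\nu_i\to\nu$ in the weak $*$-topology and $\varepsilon>0$, cover $\mathcal{B}_\mu$ by finitely many norm-balls of radius $\varepsilon$ centred at $a_1,\dots,a_n$; eventually $|\nu_i(a_j)-\nu(a_j)|<\varepsilon$ for every $j$, and a three-term triangle estimate then gives $\sup_{a\in\mathcal{B}_\mu}|\nu_i(a)-\nu(a)|\le 3\varepsilon$ eventually, i.e.\ $mk_L(\nu_i,\nu)\to 0$. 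Since $S(A)$ is compact in the weak $*$-topology ($A$ being unital) and $(S(A),mk_L)$ is Hausdorff, this continuous bijection is a homeomorphism, so the two topologies coincide; together with the standing hypotheses (density of $\mathrm{dom}(L)$, lower semicontinuity and $*$-invariance of $L$, and $\ker L=\mathbb{C}1_A$) this is exactly the assertion that $(A,L)$ is a compact quantum metric space.

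For the implication $(1)\Rightarrow(2)$, I would fix an arbitrary state $\mu$ (the state space is nonempty). Because $mk_L$ metrizes the compact weak $*$-topology, $(S(A),mk_L)$ is a compact metric space, of some finite diameter $R$, so for self-adjoint $a\in\mathcal{B}_\mu$ one gets $\|a\|=\sup_\rho|\rho(a)|=\sup_\rho|\rho(a)-\mu(a)|\le\sup_\rho mk_L(\rho,\mu)\le R$, whence $\mathcal{B}_\mu\cap A_{\mathrm{sa}}$ is norm-bounded. Then I would use the isometry $a\mapsto\widehat{a}$, $\widehat{a}(\rho):=\rho(a)$, of $A_{\mathrm{sa}}$ into $C_{\mathbb{R}}(S(A))$: the functions $\widehat{a}$ with $a\in\mathcal{B}_\mu\cap A_{\mathrm{sa}}$ are uniformly bounded by $R$, and, viewing $S(A)$ as the compact metric space $(S(A),mk_L)$, each of them is $1$-Lipschitz, since $|\rho(a)-\nu(a)|\le mk_L(\rho,\nu)$ whenever $L(a)\le 1$; hence the family is equicontinuous. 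The Arzel\`a--Ascoli theorem then shows it is relatively compact, in particular totally bounded, in $C(S(A))$ for the supremum norm, and transporting this back through the isometry shows that $\mathcal{B}_\mu\cap A_{\mathrm{sa}}$---and therefore $\mathcal{B}_\mu$, by taking real and imaginary parts---is totally bounded for the $C^*$-norm.

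The step I expect to be the main obstacle is the topological interplay between the weak $*$-topology and the metric $mk_L$. In $(2)\Rightarrow(1)$ one genuinely needs \emph{finite} $\varepsilon$-nets---total boundedness, not mere boundedness---to upgrade weak $*$-convergence of a net of states to convergence in $mk_L$, after which the ``continuous bijection from compact to Hausdorff'' principle together with weak $*$-compactness of $S(A)$ closes the loop. In $(1)\Rightarrow(2)$ the essential insight is to recognise the totally-bounded condition as an instance of the Arzel\`a--Ascoli theorem on the compact metric space $(S(A),mk_L)$, the remaining work being the bookkeeping that reduces everything to self-adjoint elements and exhibits the isometric copy of $A_{\mathrm{sa}}$ inside $C(S(A))$. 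The hypotheses that $\mathrm{dom}(L)$ is dense and that $\ker L=\mathbb{C}1_A$ are precisely what legitimise the scalar-shift normalisation and the separation of states, while $*$-invariance of $L$ is what permits the reduction to self-adjoint elements.
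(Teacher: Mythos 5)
The paper does not prove this theorem --- it is quoted from Rieffel and Ozawa--Rieffel --- so there is no in-paper argument to compare against. Your proof is correct and is essentially the standard one from those references: the normalisation $b\mapsto b-\mu(b)1_A$ reduces everything to the single set $\mathcal{B}_\mu$, the finite $\varepsilon$-net plus the ``continuous bijection from a compact space to a Hausdorff space'' principle gives $(2)\Rightarrow(1)$, and the Arzel\`a--Ascoli argument on $(S(A),mk_L)$ applied to the isometric image of $A_{\mathrm{sa}}$ in $C(S(A))$ gives $(1)\Rightarrow(2)$.
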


Building on Connes' initial work on metrics on state spaces coming from spectral triples \cite{Connes89, Connes}, Bellissard, Marcolli, and Reihani formalized the notion of a {\em compact spectral metric space} in \cite{Bellissard10}. We recall their definition here in a version adapted to the context of the present paper.

\begin{dfn}[{\cite[Definition 1]{Bellissard10}}]\label{d:csms}
Let $(\C A,H,D)$ be a spectral triple for a unital $C^*$-algebra $A$ and assume that the representation $\pi: A \rightarrow \B L(H)$ is unital and faithful. If $(A,L_D)$ is a  compact quantum metric space we call it a \emph{compact spectral metric space}.
\end{dfn}

Remark that when the seminorm $L_D : A \to [0,\infty]$ comes from a unital spectral triple (as in the above definition) it is automatic that $L_D$ is $*$-invariant, Leibniz and lower-semicontinuous, see Subsection \ref{ss:derivation}. It can also easily be seen that $\cc \cdot 1_A \subseteq \T{ker}L_D$. Thus, in order for $(A,L_D)$ to be a compact spectral metric space  all that remains to check is the condition  $\cc \cdot 1_A \supseteq \T{ker}L_D$ and (2) of Theorem  \ref{t:Rieffel}.

\section{The $q$-deformed spectral triple on the Podle\'{s} sphere}\label{s:spectral}
In this section we introduce the D\polhk{a}browski-Sitarz spectral triple on the standard Podle\'s sphere. We are following the conventions for quantum $SU(2)$ and the standard Podle\'s sphere applied by D\polhk{a}browski-Sitarz in \cite{DaSi03}. However, our approach follows the approach taken by Neshveyev and Tuset where the relationship between the Dirac operator and the quantized universal enveloping algebra is clarified, \cite{NeTu05}.

We fix $q \in (0,1)$. We let $SU_q(2)$ denote the universal unital $C^*$-algebra with two generators $a$ and $b$, subject to the relations:
\[
\begin{split}
& ba = q  ab \Q b^* a = q a b^* \Q bb^* = b^* b \\
& \Q a^* a + q^2 b b^* = 1 = aa^* + bb^*  .
\end{split}  
\]
The unital $*$-subalgebra generated by $a,b \in SU_q(2)$ is denoted by $\C O(SU_q(2))$.

We let $\pa_k : \C O(SU_q(2)) \to \C O(SU_q(2))$ denote the automorphism defined by
\[
\begin{array}{ll}
\pa_k(a) = q^{1/2} a & \Q \pa_k(b) = q^{1/2} b \\
\pa_k(a^*) = q^{-1/2} a^* & \Q \pa_k(b^*) = q^{-1/2} b^*  .
\end{array}
\]
The inverse automorphism is denoted by $\pa_{k^{-1}} : \C O(SU_q(2)) \to \C O(SU_q(2))$.

Moreover, we have the linear maps
\[
\pa_e \, \, , \, \, \, \pa_f : \C O(SU_q(2)) \to \C O(SU_q(2))
\]
defined by
\[
\begin{split}
\pa_e(a) = - b^* \Q \pa_e(b) = q^{-1} \cd a^* \Q \pa_e(a^*) = 0 = \pa_e(b^*) \\
\pa_f(a^*) = qb \Q \pa_f(b^*) = - a \Q \pa_f(a) = 0 = \pa_f(b) = 0 
\end{split}
\]
and the relations:
\[
\begin{split}
\pa_e(x \cd y) & = \pa_e(x) \cd \pa_k(y) + \pa_{k^{-1}}(x) \cd \pa_e(y) \\
\pa_f(x \cd y) & = \pa_f(x) \cd \pa_k(y) + \pa_{k^{-1}}(x) \cd \pa_f(y)  ,
\end{split}
\]
for all $x,y \in \C O(SU_q(2))$.

We let $\C O(S_q^2)$ denote the unital $*$-subalgebra of $SU_q(2)$ generated by the elements 
\[
A := b^* b \Q \T{and} \Q B := ab^* \in SU_q(2)
\]
We record the relations:
\[
\begin{array}{ll}
AB = q^2 B A &  A = A^* \\
B B^* = q^{-2} A(1-A) & B^*B = A(1 - q^2 A)  .
\end{array}
\]
The standard Podle\'s quantum sphere is the unital $C^*$-algebra $S_q^2$ obtained as the norm-closure of $\C O(S_q^2)$ inside $SU_q(2)$.

For each $n \in \zz$, we let $X_n$ denote the $C^*$-correspondence from $S_q^2$ to $S_q^2$ defined as the norm-closure of
\[
\C A_n := \big\{ x \in \C O(SU_q(2))  \mid \pa_k(x) = q^{n/2} x \big\}
\]
inside $SU_q(2)$, the inner product being given by $\inn{x,x} := x^* x$ for all $x \in \C A_n$. We remark that $\C A_0 = \C O(S_q^2)$.

The derivations $\pa_1 := q^{1/2}\pa_e : \C O(S_q^2) \to X_{-2}$ and $\pa_2 := q^{-1/2}\pa_f : \C O(S_q^2) \to X_2$ are then given on generators by
\begin{equation}\label{eq:derval}
\begin{array}{lll}
\pa_1(A) = b^* a^*  & \, \, \pa_1(B) = -(b^*)^2 & \, \, \pa_1(B^*) = q^{-1} (a^*)^2 \\
\pa_2(A) = - ab & \, \, \pa_2(B) = -q^{-1} a^2 & \, \, \pa_2(B^*) = b^2  .
\end{array}
\end{equation}
We remark that these two derivations $\pa_1$ and $\pa_2$ are closable and related by the formula
\[
\pa_1(x)^* = - \pa_2(x^*) \Q x \in \C O(S_q^2)  .
\]

We let $h : SU_q(2) \to \cc$ denote the Haar state and define the Hilbert spaces $H_h$ as the completion of $SU_q(2)$  with respect to the inner product
\[
\inn{x,y} := h(x^* y) \Q x,y \in SU_q(2)  .
\]
We let $H_+$ and $H_- \su H_h$ denote the Hilbert spaces obtained as the closures of $\C A_1$ and $\C A_{-1}$, respectively. For details on the Haar state (and quantum $SU(2)$ in general) see \cite{KlSc97}.

We define the symmetric unbounded operator
\[
\sD_q := \ma{cc}{ 0 & \pa_f \\ \pa_e & 0 } : \C A_1 \op \C A_{-1} \to H_+ \op H_-  ,
\]
and we let $D_q : \T{Dom}(D_q) \to H_+ \op H_-$ denote the closure of $\sD_q$. The left action of $\C O(S_q^2) = \C A_0$ on $\C A_1 \op \C A_{-1}$ induced by the multiplication in $\C O(SU_q(2))$ then yields a faithful representation 
\[
\pi = \ma{cc}{\pi_+ & 0 \\ 0 & \pi_-} : S_q^2 \to H_+ \op H_-  .
\]

The following result is from \cite{DaSi03} (in fact in \cite{DaSi03} also the uniqueness, reality and equivariance are addressed):

\begin{thm}[D\polhk{a}browski-Sitarz]\label{t:dabsit}
The triple $\big( \C O(S_q^2), H_+ \op H_-, D_q\big)$ is an even spectral triple.
\end{thm}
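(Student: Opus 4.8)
To prove that $\big(\C O(S_q^2), H_+ \op H_-, D_q\big)$ is an even spectral triple, I would verify the three defining properties in turn: (i) boundedness of the commutators $[D_q, \pi(x)]$ for $x \in \C O(S_q^2)$, (ii) compactness of the resolvent $(1 + D_q^2)^{-1/2}$, and (iii) existence of a $\zz/2$-grading on $H_+ \op H_-$ that commutes with $\pi$ and anticommutes with $D_q$. The grading in (iii) is essentially free: the natural grading operator $\ga = \T{diag}(1,-1)$ on $H_+ \op H_-$ commutes with the diagonal representation $\pi = \T{diag}(\pi_+,\pi_-)$ by construction, and anticommutes with the off-diagonal operator $\sD_q = \left(\begin{smallmatrix} 0 & \pa_f \\ \pa_e & 0\end{smallmatrix}\right)$, hence with its closure $D_q$. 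So the work is entirely in (i) and (ii).

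\textbf{Boundedness of commutators.} First I would reduce to generators: since $\C O(S_q^2)$ is generated as a $*$-algebra by $A$ and $B$, and since $x \mapsto [D_q,\pi(x)]$ is a derivation, it suffices to check that $\pi(A)$, $\pi(B)$, and $\pi(B^*)$ preserve $\C A_1 \op \C A_{-1}$ and that the commutators extend boundedly there. The key input is the twisted Leibniz rules for $\pa_e$ and $\pa_f$ stated in the excerpt together with the explicit values \eqref{eq:derval} of the derivations $\pa_1 = q^{1/2}\pa_e$ and $\pa_2 = q^{-1/2}\pa_f$ on $A$, $B$, $B^*$. Concretely, for $y \in \C A_{\pm 1}$ one has (up to the scaling $\pa_k(y) = q^{\pm 1/2} y$, so that the twist terms $\pa_k(y)$ and $\pa_{k^{-1}}(y)$ are just scalar multiples of $y$) identities of the form $\pa_f(xy) = \pa_f(x)\cd \pa_k(y) + \pa_{k^{-1}}(x)\cd \pa_f(y)$, which means $[\sD_q,\pi(x)]$ acts on the correspondence $X_{\mp 2}$ simply as left multiplication by the elements listed in \eqref{eq:derval}. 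Since those elements ($b^*a^*$, $(b^*)^2$, $(a^*)^2$, $ab$, $a^2$, $b^2$) lie in $SU_q(2)$ and act by bounded operators on $H_h$ of norm at most a fixed constant, the commutators are bounded, with the bound for $[D_q,\pi(x)]$ controlled by $\|\pa_1(x)\|_\infty + \|\pa_2(x)\|_\infty$. I would present this cleanly by noting $[D_q,\pi(x)] = \left(\begin{smallmatrix} 0 & \pa_2(x) \\ -\pa_1(x) & 0 \end{smallmatrix}\right)$ (using $\pa_1(x)^* = -\pa_2(x^*)$ to get signs and adjoints consistent) as an operator from $\C A_1 \op \C A_{-1}$ into $H_+ \op H_-$, acting by left multiplication; since $SU_q(2)$ is represented by bounded operators on $H_h$ and these elements are norm-bounded, the bounded extension exists.

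\textbf{Compact resolvent.} This is the main obstacle and where I would invoke the structure theory of $SU_q(2)$. The cleanest route is to use the Peter--Weyl decomposition of $H_h$ (equivalently, the explicit orthonormal basis of $SU_q(2)$ in terms of the matrix coefficients $t^{(\ell)}_{m,n}$ of irreducible corepresentations), restricted to $H_+ = \overline{\C A_1}$ and $H_- = \overline{\C A_{-1}}$. Under this decomposition, $\pa_e$ and $\pa_f$ are ladder operators shifting the $m$-index, and $D_q$ becomes, on each weight subspace, a finite-rank operator. The key computation — which is exactly the content of Neshveyev--Tuset's identification of this Dirac operator with the action of the Casimir-type element from the quantized enveloping algebra — is that $D_q^2$ acts on the $\ell$-isotypic component (of dimension growing linearly in $\ell$) as a scalar of the form $c\cd (q^{-\ell} - q^{\ell})^2$ or $c\cd [\ell]_q^2$ for an appropriate $q$-integer, which grows like $q^{-2\ell}$. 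Because the eigenvalues blow up exponentially in $\ell$ while the multiplicities grow only polynomially, $(1+D_q^2)^{-1/2}$ is trace-class, hence compact; in fact this is the source of the claimed spectral dimension $0$. I would structure this as: (a) recall the Peter--Weyl basis and how $H_\pm$ sit inside $H_h$; (b) quote from \cite{NeTu05} (or recompute) the action of $\pa_e,\pa_f$ on basis vectors; (c) diagonalize $D_q^2 = \pa_e^*\pa_e \op \pa_f^*\pa_f$ (after identifying $\pa_e^* = -\pa_f$ via the relation in the excerpt), reading off the eigenvalues and multiplicities; (d) conclude compactness (indeed summability) by comparing exponential growth against polynomial multiplicity. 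The one technical point needing care is confirming that the closure $D_q$ of $\sD_q$ has the diagonalization one expects — i.e. that $\sD_q$ is essentially self-adjoint on $\C A_1 \op \C A_{-1}$ — which follows because $\sD_q$ leaves invariant the finite-dimensional weight subspaces spanning a core, so it is diagonalizable and its closure is self-adjoint with the stated spectrum.
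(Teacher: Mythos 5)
The paper offers no proof of this statement: Theorem \ref{t:dabsit} is quoted verbatim from D\polhk{a}browski--Sitarz \cite{DaSi03}, with the presentation of $D_q$ in terms of $\pa_e$ and $\pa_f$ taken from Neshveyev--Tuset \cite{NeTu05}, so there is no in-paper argument to compare yours against. That said, your sketch is the standard proof from those references and is structurally sound. The grading claim is correct and genuinely free. Your treatment of bounded commutators is right: for $x \in \C A_0$ the twist $\pa_k(x) = x$ is trivial, so the twisted Leibniz rule makes $[\sD_q,\pi(x)]$ act on $\C A_1 \op \C A_{-1}$ by left multiplication by elements of $\C O(SU_q(2))$, which are bounded on $H_h$; the only blemish is a sign (the paper's convention has $+\pa_1(x)$ in the lower-left corner of $d_q(x)$), which is immaterial for boundedness. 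Your compact-resolvent argument via Peter--Weyl decomposition, equivariance, and eigenvalues growing like $q$-integers $[\ell]_q \sim q^{-\ell}$ against linearly growing multiplicities is exactly how the cited papers establish compactness (and $0$-summability). The one place where you lean on something that is not quite what the excerpt provides: the relation $\pa_1(x)^* = -\pa_2(x^*)$ is an algebraic identity for elements $x \in \C O(S_q^2)$, not the operator-theoretic statement that $\pa_e$ and $\pa_f$ are mutually (skew-)adjoint on $H_h$; the symmetry of $\sD_q$ --- equivalently $\pa_e \su \pa_f^*$ with respect to the Haar-state inner product --- is a separate computation using the invariance and modular properties of $h$, and it is also what underlies your claim that the closure of $\sD_q$ is self-adjoint with the expected diagonalization. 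That verification, together with the explicit eigenvalue computation, is the real content hidden in the citation to \cite{DaSi03,NeTu05}; your plan correctly identifies where it sits but defers it.
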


We let $\T{Lip}_{D_q}(S_q^2) \su S_q^2$ denote the Lipschitz algebra associated to the Dirac operator $D_q : \T{Dom}(D_q) \to H_+ \op H_-$ and record the formula
\[
d_q(x) = \ma{cc}{0 & \pa_2(x) \\ \pa_1(x) & 0} \Q x \in \C O(S_q^2)
\]
for the associated derivation. In particular we obtain extensions of $\pa_1$ and $\pa_2$ to the Lipschitz algebra $\T{Lip}_{D_q}(S_q^2)$. These two extensions are again denoted by $\pa_1$ and $\pa_2$:
\[
\pa_1 : \T{Lip}_{D_q}(S_q^2) \to \B L(H_+, H_-) \Q \pa_2 : \T{Lip}_{D_q}(S_q^2) \to \B L(H_-, H_+)  .
\]
We do not know whether the images of $\pa_1$ and $\pa_2$ are contained in the $C^*$-algebra $SU_q(2)$ even though this holds for their restrictions to the coordinate algebra $\C O(S_q^2)$.

\section{Matrix units for the standard Podle\'s sphere}
We fix $q \in (0,1)$. In this section we discuss the well-known result that the unital $C^*$-algebra $S_q^2$ is isomorphic to the unitization of the compact operators on a separable Hilbert space, see \cite[Proposition 4]{Podles87}: 

\begin{prop}[Podle\'s]\label{p:comisom}
The standard Podle\'s sphere $S_q^2$ is $*$-isomorphic to the unitization of the compact operators on the separable Hilbert space $\ell^2(\nn \cup \{0\})$ via the unital representation
\[
\pi : S_q^2 \to \B L( \ell^2(\nn \cup \{0\})) \Q \pi(A)(e_k) := q^{2k} e_k \, , \, \, \pi(B)(e_k) = q^k \sqrt{1 - q^{2(k+1)}} e_{k+1},
\]
where $\{e_k\}_{k = 0}^\infty$ denotes the standard orthonormal basis for $\ell^2(\nn \cup \{0\})$.
\end{prop}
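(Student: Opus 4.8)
The strategy is to obtain $\pi$ from a known faithful representation of the ambient algebra $SU_q(2)$ --- this settles at once that the displayed formulas define a $*$-homomorphism and that it is injective --- and then to identify the image by means of the continuous functional calculus together with an explicit construction of matrix units.

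First I would invoke the standard faithful $*$-representation $\rho$ of $SU_q(2)$ on $\ell^2(\nn \cup \{0\}) \otimes \ell^2(\zz)$, see \cite{KlSc97}: with $\{e_k\}_{k \geq 0}$ and $\{\xi_m\}_{m \in \zz}$ the standard orthonormal bases and $S\xi_m := \xi_{m+1}$ the bilateral shift, one may take
\[
\rho(a)(e_k \otimes \xi_m) = \sqrt{1 - q^{2(k+1)}} \, e_{k+1} \otimes \xi_m , \qquad \rho(b)(e_k \otimes \xi_m) = q^k \, e_k \otimes \xi_{m+1} .
\]
Evaluating $\rho(A) = \rho(b^*b)$ and $\rho(B) = \rho(ab^*)$ on the basis yields $\rho(A) = \pi(A) \otimes 1$ and $\rho(B) = \pi(B) \otimes S^*$, where $\pi$ is the assignment appearing in the statement. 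I would then untwist the second tensor leg: letting $W$ be the unitary permuting the orthonormal basis via $e_k \otimes \xi_m \mapsto e_k \otimes \xi_{m+k}$, a direct check on generators gives $W\rho(x)W^* = \pi(x) \otimes 1$ for all $x \in \C O(S_q^2)$, hence, by density and continuity, for all $x \in S_q^2$. In particular the formulas of the statement do extend to a unital $*$-homomorphism $\pi : S_q^2 \to \B L(\ell^2(\nn \cup \{0\}))$, and $\rho|_{S_q^2}$ is unitarily equivalent to the amplification $\pi \otimes \T{id}_{\ell^2(\zz)}$. Since $\rho$ is faithful on $SU_q(2)$, its restriction to the $C^*$-subalgebra $S_q^2$ is faithful, and because $T \mapsto T \otimes 1$ is isometric, $\pi$ is faithful as well.

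Next I would compute the image. As $\pi(A)$ is diagonal with eigenvalues $q^{2k}$ tending to $0$, and $\pi(B), \pi(B^*)$ are weighted shifts whose weights tend to $0$, any product of positively many of $\pi(A), \pi(B), \pi(B^*)$ is compact. Since $\C O(S_q^2)$ is the linear span of $1$ together with such products, this gives $\pi(\C O(S_q^2)) \su \cc \cd 1 + \B K$, and hence $\pi(S_q^2) \su \cc \cd 1 + \B K$ because $\cc \cd 1 + \B K$ is norm-closed, where $\B K := \B K(\ell^2(\nn \cup \{0\}))$. For the reverse inclusion I would use that $\T{spec}(\pi(A)) = \{ q^{2k} \mid k \in \nn \cup \{0\} \} \cup \{0\}$ has $1 = q^0$ as an isolated point; the characteristic function of $\{1\}$ therefore lies in $C_0(\T{spec}(\pi(A)) \setminus \{0\})$, and applying the continuous functional calculus to $\pi(A) \in \pi(S_q^2)$ produces inside $C^*(\pi(A)) \su \pi(S_q^2)$ precisely the rank one projection $p_0$ onto $\cc e_0$. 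Since $\pi(B)^j e_0 = c_j e_j$ with $c_j > 0$, all matrix units are then recovered inside $\pi(S_q^2)$ via $e_{j0} = c_j^{-1} \pi(B)^j p_0$, $e_{0k} = (e_{k0})^*$ and $e_{jk} = e_{j0} e_{0k}$, so that $\B K \su \pi(S_q^2)$. Combined with $1 = \pi(1) \in \pi(S_q^2)$ this yields $\pi(S_q^2) = \cc \cd 1 + \B K$, which is exactly the unitalization of $\B K(\ell^2(\nn \cup \{0\}))$.

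The verification of the defining relations for $\pi(A), \pi(B)$ and the matrix-unit bookkeeping are routine. The step demanding genuine input, which I expect to be the main obstacle, is the faithfulness of $\pi$; the amplification device above reduces it to the standard fact that $SU_q(2)$ acts faithfully on $\ell^2(\nn \cup \{0\}) \otimes \ell^2(\zz)$. Alternatively one could appeal to faithfulness of the Haar state of $SU_q(2)$, whose restriction to the $C^*$-subalgebra $S_q^2$ is again faithful, but locating $\pi$ inside the associated GNS representation leads back to the same untwisting computation.
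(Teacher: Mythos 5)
Your argument is correct, but note that the paper does not actually prove this proposition: it is quoted verbatim from Podle\'s (\cite[Proposition 4]{Podles87}), whose original route is a classification of the irreducible $*$-representations of the sphere algebra (one infinite-dimensional irreducible representation plus one character for the standard sphere). What you do instead is a clean and self-contained alternative: you restrict the standard faithful representation of $SU_q(2)$ on $\ell^2(\nn \cup \{0\}) \otimes \ell^2(\zz)$, check on generators that $\rho(A) = \pi(A) \otimes 1$ and $\rho(B) = \pi(B) \otimes S^*$, and untwist the $\ell^2(\zz)$-leg by the unitary $e_k \otimes \xi_m \mapsto e_k \otimes \xi_{m+k}$ to exhibit $\rho|_{S_q^2}$ as an amplification of $\pi$; this simultaneously shows that $\pi$ is a well-defined unital $*$-homomorphism and that it is faithful, modulo the standard fact (available in \cite{KlSc97}) that $\rho$ is faithful on $SU_q(2)$. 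I verified the generator computations: with the stated weights one indeed gets $\rho(b^*b)(e_k\otimes\xi_m)=q^{2k}e_k\otimes\xi_m$ and $\rho(ab^*)(e_k\otimes\xi_m)=q^k\sqrt{1-q^{2(k+1)}}\,e_{k+1}\otimes\xi_{m-1}$, and the conjugation by $W$ works out. The identification of the image is also sound: every nonempty word in $\pi(A),\pi(B),\pi(B^*)$ is compact, giving one inclusion, and the spectral projection $\chi_{\{1\}}(\pi(A))$ (legitimate by continuous functional calculus in the unital $C^*$-algebra $\pi(S_q^2)$, since $1$ is isolated in the spectrum) is the rank-one projection onto $\cc e_0$, from which the matrix units $f_{n,k}$ are generated exactly as in the paper's Equation \eqref{eq:onb}; this gives the reverse inclusion. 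Your approach buys a concrete, citation-light proof whose only external input is the faithfulness of the standard representation of $SU_q(2)$, whereas the cited approach of Podle\'s buys the full representation theory of all the sphere algebras at once.
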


For later use, we record that
\[
\pi(B^*)(e_k) = q^{k-1} \sqrt{1 - q^{2k}} e_{k - 1} \Q k > 0 \, , \, \, \pi(B^*)(e_0) = 0.
\]

We notice that the spectrum of the positive element $A \in S_q^2$ is given by
\[
\T{Sp}(A) = \{0\} \cup \{ q^{2k} \mid k \in \nn \cup \{0\} \} .
\]
For $k \in \nn \cup \{0\}$, we let $\chi_{\{q^{2k}\}} : \T{Sp}(A) \to \{0,1\}$ denote the indicator function for the subset $\{q^{2k}\} \subseteq \T{Sp}(A)$. Remark that $\chi_{\{q^{2k}\}}$ is in fact continuous.
\medskip

We apply the concrete representation from Proposition \ref{p:comisom} to find explicit matrix units $f_{n,k} \in S_q^2$, $n,k \in \nn \cup \{0\}$. These matrix units will play an important role in the following sections.
\medskip

For each $n,k \in \nn \cup\{0\}$, we define the constants
\begin{equation}\label{eq:normal}
C_{n,k} := \fork{ccc}{ \prod_{j = n}^{k-1} q^{2j} (1 - q^{2(j+1)}) & \T{for} & 0 \leq n < k \\ 
1 & \T{for} & n = k \\
\prod_{j = k}^{n-1} q^{2j} (1 - q^{2(j+1)}) & \T{for} & n > k },
\end{equation}
and then the elements
\begin{equation}\label{eq:onb}
f_{n,k} := \fork{ccc}{
\frac{1}{\sqrt{C_{n,k}}} B^{n-k} \cd \chi_{\{q^{2k}\}}(A) & \T{for} & n \geq k \\
\frac{1}{\sqrt{C_{n,k}}} (B^*)^{k - n} \cd \chi_{\{q^{2k}\}}(A) & \T{for} & n \in \{0,1,\ldots,k-1\} 
} 
\end{equation}
in $S_q^2$. These elements are the matrix units for the standard Podle\'s sphere:

\begin{lemma}\label{l:matrix}
We have the relations
\[
\begin{split}
f_{n,k}^* = f_{k,n} \Q f_{n,k} \cd f_{m,l} = \fork{ccc}{ f_{n,l} & \T{for} & k = m \\ 0 & \T{for} & k \neq m} \\
\end{split}
\]
for all $n,k,m,l \in \nn \cup \{0\}$.
\end{lemma}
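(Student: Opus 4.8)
The plan is to verify both identities directly using the concrete representation $\pi : S_q^2 \to \B L(\ell^2(\nn \cup \{0\}))$ from Proposition \ref{p:comisom}, exploiting that $\pi$ is injective so that it suffices to check equality of operators. First I would compute the action of each $f_{n,k}$ on basis vectors. From the formulas $\pi(B)(e_k) = q^k\sqrt{1 - q^{2(k+1)}}\,e_{k+1}$ and $\pi(B^*)(e_k) = q^{k-1}\sqrt{1-q^{2k}}\,e_{k-1}$ (for $k>0$), together with $\pi(\chi_{\{q^{2k}\}}(A)) = $ the rank-one projection onto $\cc e_k$, I would show by an easy induction that for $n \geq k$,
\[
\pi(B^{n-k})(e_k) = \sqrt{C_{n,k}}\, e_n,
\]
and similarly $\pi((B^*)^{k-n})(e_k) = \sqrt{C_{n,k}}\, e_n$ for $n < k$, where $C_{n,k}$ is exactly the normalization constant defined in \eqref{eq:normal}; the product appearing in $C_{n,k}$ is precisely the accumulated product of the squared coefficients $q^{2j}(1-q^{2(j+1)})$ picked up at each application of $B$ (resp.\ $B^*$). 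Dividing by $\sqrt{C_{n,k}}$ then gives the clean formula $\pi(f_{n,k})(e_l) = \delta_{k,l}\, e_n$, i.e.\ $\pi(f_{n,k})$ is the standard matrix unit $E_{n,k}$ on $\ell^2(\nn\cup\{0\})$.

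Once this key formula is established, both claimed relations are immediate: $\pi(f_{n,k})^* = E_{n,k}^* = E_{k,n} = \pi(f_{k,n})$ gives $f_{n,k}^* = f_{k,n}$ by injectivity of $\pi$, and $\pi(f_{n,k})\pi(f_{m,l})(e_j) = \pi(f_{n,k})(\delta_{l,j} e_m) = \delta_{l,j}\delta_{k,m} e_n$, which is $\pi(f_{n,l})(e_j)$ when $k = m$ and $0$ when $k \neq m$; again injectivity of $\pi$ transfers this to $S_q^2$. I should be slightly careful about boundary cases: the product $\prod_{j=n}^{k-1}$ is empty (equal to $1$) when $n = k$, consistent with $f_{k,k} = \chi_{\{q^{2k}\}}(A)$ being the rank-one projection $E_{k,k}$; and when computing $\pi((B^*)^{k-n})(e_k)$ one never hits $e_{-1}$ since we stop at $e_n$ with $n \geq 0$.

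The main obstacle — really the only point requiring care — is bookkeeping the normalization constants: matching the telescoping product of squared matrix coefficients collected along the chain $e_k \mapsto e_{k+1} \mapsto \cdots \mapsto e_n$ (or the descending chain for $B^*$) with the explicit product $\prod_{j} q^{2j}(1 - q^{2(j+1)})$ in \eqref{eq:normal}, and checking that the $B^*$-branch yields the same $C_{n,k}$ as would be obtained by transposing the $B$-branch (this is forced by $f_{n,k}^* = f_{k,n}$ and is where the symmetry of the definition of $C_{n,k}$ in $n \leftrightarrow k$ is used). Beyond that, everything is a routine induction and a direct matrix-unit computation. I would present the argument as: (i) a lemma computing $\pi(B^m)(e_k)$ and $\pi((B^*)^m)(e_k)$, (ii) the resulting identity $\pi(f_{n,k}) = E_{n,k}$, and (iii) the two relations as corollaries via injectivity of $\pi$.
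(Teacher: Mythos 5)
Your proposal is correct and follows essentially the same route as the paper: both reduce the claim to the faithful representation $\pi$ of Proposition \ref{p:comisom} and verify that $\pi(f_{n,k})(e_l) = \delta_{k,l}\, e_n$, i.e.\ that $\pi(f_{n,k})$ is the standard matrix unit, with the normalization constants $C_{n,k}$ absorbing the telescoping product of coefficients exactly as you describe. Nothing further is needed.
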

\begin{proof}
Let $n,k \in \nn \cup \{0\}$. By Proposition \ref{p:comisom} it suffices to show that 
\[
\pi(f_{n,k})(e_l) = \fork{ccc}{e_n & \T{for} & k = l \\ 0 & \T{for} & k \neq l}.
\]
The case where $k \neq l$ is immediate. For $k = l$ and $n > k$, we compute that
\[
\pi(f_{n,k})(e_k) = \frac{1}{\sqrt{C_{n,k}}} \pi(B^{n-k}) (e_k)
= \frac{1}{\sqrt{C_{n,k}}} \cd \prod_{j = k}^{n-1} q^j \sqrt{1 - q^{2(j+1)}} \cd e_n
= e_n.
\]
Similarly, for $k = l$ and $n < k$, we compute that
\[
\pi(f_{n,k})(e_k) = \frac{1}{\sqrt{C_{n,k}}} \pi(B^*)^{k-n} (e_k)
= \frac{1}{\sqrt{C_{n,k}}} \cd \prod_{j = n}^{k-1} q^j \sqrt{1 - q^{2(j+1)}} \cd e_n
= e_n.
\]
The case where $k = l$ and $n = k$ is also immediate and the lemma is proved.
\end{proof}



\section{Derivatives of matrix units}
We are interested in computing the value of the derivation
\[
\pa_1 : \T{Lip}_{D_q}(S_q^2) \to \B L(H_+,H_-)
\]
when applied to the matrix units $f_{n,k} \in S_q^2$, $n,k \in \nn \cup \{0\}$. We start out softly by showing that the projections $\chi_{\{q^{2k}\}}(A)$, $k \in \nn \cup \{0\}$, lie in the Lipschitz algebra $\T{Lip}_{D_q}(S_q^2)$ and we compute the value of the derivation $\pa_1$ on these particular elements.
\medskip

We record the commutation relations:
\begin{equation}\label{eq:bBrel}
\begin{split}
b^* B & = q B b^* \Q a^* B^*  = q B^* a^* \\
a^* A & = q^2 A a^* \Q b^* A = A b^*,
\end{split}
\end{equation}
which follow directly from the defining relations for $SU_q(2)$.

\begin{lemma}\label{l:derpotens}
It holds for each $n \in \nn \cup \{0\}$ that
\[
\begin{split}
\pa_1(A^n) & = \frac{1 - q^{2n}}{1 - q^2} A^{n-1} b^* a^*  \Q \pa_1(B^n) = - \frac{1 - q^{2n}}{1 - q^2} B^{n-1} (b^*)^2 \\
& \Q \pa_1( (B^*)^n) = q^{-1} \frac{1 - q^{2n}}{1 - q^2} (B^*)^{n-1} (a^*)^2  .
\end{split}
\]
\end{lemma}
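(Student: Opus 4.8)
The plan is to prove all three formulas simultaneously by induction on $n$, using the fact that $\pa_1 = q^{1/2}\pa_e$ is a twisted derivation. Concretely, for elements $x, y$ whose products we understand, we have
\[
\pa_1(x \cd y) = \pa_1(x) \cd \pa_k(y) + \pa_{k^{-1}}(x) \cd \pa_1(y),
\]
which is just the $\pa_e$-Leibniz rule rescaled by $q^{1/2}$. I would first record the base case $n = 1$: there $\pa_1(A) = b^* a^*$, $\pa_1(B) = -(b^*)^2$ and $\pa_1(B^*) = q^{-1}(a^*)^2$, which are exactly the values in Equation \eqref{eq:derval}, and all three right-hand sides in the statement reduce to these since $\frac{1-q^2}{1-q^2} = 1$ and the exponent $n-1$ becomes $0$.

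For the inductive step I would write $A^n = A^{n-1} \cd A$ (and similarly $B^n = B^{n-1}\cd B$, $(B^*)^n = (B^*)^{n-1}\cd B^*$) and apply the twisted Leibniz rule. The key auxiliary computation is how $\pa_k$ acts on powers of $A$, $B$, $B^*$: since $\pa_k$ is an algebra automorphism scaling $a,b$ by $q^{1/2}$ and $a^*, b^*$ by $q^{-1/2}$, we get $\pa_k(A) = \pa_k(b^*b) = A$ (the $q^{-1/2}$ and $q^{1/2}$ cancel), $\pa_k(B) = \pa_k(ab^*) = q^{1/2}\cd q^{-1/2} B = B$, and likewise $\pa_k(B^*) = B^*$; hence $\pa_k$ and $\pa_{k^{-1}}$ both fix $A$, $B$, $B^*$ and all their powers. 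So the twist is actually trivial on these particular elements and the Leibniz rule collapses to $\pa_1(x\cd y) = \pa_1(x)\cd y + x \cd \pa_1(y)$ for $x,y$ among our generators and their powers. Then, for instance,
\[
\pa_1(A^n) = \pa_1(A^{n-1})\cd A + A^{n-1}\cd \pa_1(A) = \frac{1-q^{2(n-1)}}{1-q^2} A^{n-2} b^* a^* A + A^{n-1} b^* a^*,
\]
and one uses the commutation relations $b^* A = A b^*$ and $a^* A = q^2 A a^*$ from Equation \eqref{eq:bBrel} to move the $A$ to the left: $b^* a^* A = q^2 A b^* a^*$, giving $\frac{1-q^{2(n-1)}}{1-q^2} q^2 A^{n-1} b^* a^* + A^{n-1} b^* a^* = \frac{q^2 - q^{2n} + 1 - q^2}{1-q^2} A^{n-1} b^* a^* = \frac{1-q^{2n}}{1-q^2} A^{n-1}b^* a^*$, as claimed. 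The $B^n$ and $(B^*)^n$ cases are entirely parallel, using $b^* B = qB b^*$ (so $(b^*)^2 B = q^2 B (b^*)^2$) and $a^* B^* = q B^* a^*$ (so $(a^*)^2 B^* = q^2 B^* (a^*)^2$) respectively, again from Equation \eqref{eq:bBrel}; in each case the same arithmetic identity $\frac{1-q^{2(n-1)}}{1-q^2} q^2 + 1 = \frac{1-q^{2n}}{1-q^2}$ closes the induction.

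One small point that needs care is making sure everything lives in $\T{Lip}_{D_q}(S_q^2)$ so that the twisted Leibniz rule and the extension of $\pa_1$ are legitimate: since $A, B, B^* \in \C O(S_q^2)$ and $\T{Lip}_{D_q}(S_q^2)$ is a $*$-subalgebra of $S_q^2$ containing $\C O(S_q^2)$, all powers and products involved are in the Lipschitz algebra, and on $\C O(S_q^2)$ the derivation $\pa_1$ agrees with the classical one given by Equation \eqref{eq:derval} and satisfies the stated twisted Leibniz identity. I do not anticipate a serious obstacle here; the main thing to get right is the bookkeeping with the $q$-commutation relations when normal-ordering the products after applying Leibniz, and verifying the telescoping arithmetic of the $q$-integer coefficients. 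The inductive structure makes this routine once the base case and the action of $\pa_k$ on $A$, $B$, $B^*$ are pinned down.
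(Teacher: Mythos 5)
Your proof is correct and follows essentially the same route as the paper, which likewise derives the formulas from the Leibniz rule for $\pa_1$, the values $\pa_1(A) = b^*a^*$, $\pa_1(B) = -(b^*)^2$, $\pa_1(B^*) = q^{-1}(a^*)^2$ from Equation \eqref{eq:derval}, and the commutation relations in Equation \eqref{eq:bBrel}; your observation that $\pa_k$ fixes $A$, $B$, $B^*$ so the twisted rule collapses to the ordinary one is just a slightly more explicit way of saying that these elements have weight zero. The induction, the normal-ordering via $b^*a^*A = q^2Ab^*a^*$ (and its analogues), and the telescoping of the $q$-integer coefficients are all as intended.
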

\begin{proof}
This follows from the Leibniz rule for 
\[
\pa_1 : \T{Lip}_{D_q}(S_q^2) \to \B L(H_+,H_-),
\]
the relations in Equation \eqref{eq:bBrel} and the fact that $\pa_1(A) = b^* a^*$, $\pa_1(B) = - (b^*)^2$ and $\pa_1(B^*) = q^{-1} (a^*)^2$ (see Equation \eqref{eq:derval}).
\end{proof}

\begin{lemma}\label{l:indicator}
Let $k \in \nn \cup \{0\}$. Then
\[
\chi_{\{q^{2k}\}}(A) = \lim_{n \to \infty}\Big( \frac{1}{q^{2kn}} A^n - \sum_{j = 1}^k \frac{1}{q^{2jn}} \chi_{ \{ q^{2(k-j)}\}}(A) \Big)  ,
\]
where the convergence takes place in the norm on $S_q^2$.
\end{lemma}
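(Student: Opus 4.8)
The plan is to exploit the concrete representation $\pi : S_q^2 \to \B L(\ell^2(\nn \cup \{0\}))$ from Proposition \ref{p:comisom}, under which $\pi(A)$ is the diagonal operator $\pi(A)(e_l) = q^{2l} e_l$. Since $\pi$ is an isometric $*$-isomorphism onto the unitalization of the compacts, norm convergence in $S_q^2$ is the same as norm convergence of the corresponding operators, so it suffices to prove the identity after applying $\pi$. Under $\pi$, the projection $\chi_{\{q^{2k}\}}(A)$ is the rank-one projection $e_{k,k}$ onto $\cc e_k$, and more generally $\chi_{\{q^{2(k-j)}\}}(A)$ is the rank-one projection onto $\cc e_{k-j}$ for $0 \le j \le k$.

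First I would fix $n$ and compute the operator $T_n := q^{-2kn} \pi(A)^n - \sum_{j=1}^k q^{-2jn} e_{k-j,k-j}$ entrywise on the basis $\{e_l\}$. Since $\pi(A)^n(e_l) = q^{2ln} e_l$, we get that $T_n$ is diagonal with $T_n(e_l) = \lambda_{l,n} e_l$, where $\lambda_{l,n} = q^{2(l-k)n}$ if $l > k$ or $l = k$ (in which case it equals $1$, hitting the $A^n$ term only), while for $0 \le l < k$ the $A^n$-term contributes $q^{2(l-k)n}$ and the single subtracted projection $e_{l,l}$ (corresponding to $j = k - l$) contributes $-q^{-2(k-l)n} = -q^{2(l-k)n}$, so $\lambda_{l,n} = 0$. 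Hence $T_n(e_k) = e_k$, $T_n(e_l) = 0$ for $l < k$, and $T_n(e_l) = q^{2(l-k)n} e_l$ for $l > k$.

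It then remains to estimate $\| T_n - e_{k,k} \|_\infty$. This operator is diagonal, vanishing on $e_l$ for $l \le k$ and equal to $q^{2(l-k)n}$ on $e_l$ for $l > k$; since $q \in (0,1)$, the supremum of these eigenvalues is attained at $l = k+1$, giving $\| T_n - e_{k,k} \|_\infty = q^{2n} \to 0$ as $n \to \infty$. Translating back through $\pi^{-1}$ and recalling $\pi^{-1}(e_{k,k}) = \chi_{\{q^{2k}\}}(A)$ and $\pi^{-1}(e_{k-j,k-j}) = \chi_{\{q^{2(k-j)}\}}(A)$ yields the claimed norm limit.

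There is no serious obstacle here: the only point requiring a little care is the bookkeeping in the index shift, namely that the $j$-th subtracted term $q^{-2jn}\chi_{\{q^{2(k-j)}\}}(A)$ is designed precisely to cancel the $A^n$-contribution on the eigenspace $\cc e_{k-j}$, and that no double-counting occurs because the projections $\chi_{\{q^{2(k-j)}\}}(A)$, $1 \le j \le k$, are pairwise orthogonal and all orthogonal to $\chi_{\{q^{2k}\}}(A)$. One should also note at the outset that $A^n$, being a norm limit of polynomials in $A$ (indeed a polynomial in $A$), lies in $\C O(S_q^2) \subseteq S_q^2$, and that each $\chi_{\{q^{2k}\}}(A)$ lies in $S_q^2$ by continuous functional calculus since $\chi_{\{q^{2k}\}}$ is continuous on $\T{Sp}(A)$, as already remarked in the text; so every term in the displayed formula is a genuine element of $S_q^2$ and the limit statement makes sense.
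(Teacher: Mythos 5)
Your proposal is correct and is essentially the paper's argument made explicit: the paper's one-line proof appeals to continuous functional calculus on $\T{Sp}(A) = \{0\} \cup \{q^{2l} \mid l \in \nn \cup \{0\}\}$, and your entrywise computation in the representation of Proposition \ref{p:comisom} is precisely the verification that the functions $t \mapsto q^{-2kn}t^n - \sum_{j=1}^k q^{-2jn}\chi_{\{q^{2(k-j)}\}}(t)$ converge uniformly on that spectrum to $\chi_{\{q^{2k}\}}$, with the explicit error bound $q^{2n}$.
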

\begin{proof}
This is true since $\T{Sp}(A) = \{0\} \cup \{ q^{2k} \mid k \in \nn \cup \{0\}\}$.
\end{proof}

We now present our formula for the derivatives of indicator functions associated to the isolated points in the spectrum of $A$:

\begin{lemma}\label{l:derind}
Let $k \in \nn \cup \{0\}$. It holds that $\chi_{\{q^{2k}\}}(A) \in \T{Lip}_{D_q}(S_q^2)$ and the derivative is given by
\[
\begin{split}
\pa_1\big( \chi_{\{q^{2k}\}}(A) \big) 
& = \frac{1}{q^{2k} (1 - q^2)} \chi_{ \{q^{2k}\}}(A) \cd b^* a^* \\
& \Q - \frac{1}{q^{2(k-1)} (1 - q^2)} \chi_{ \{q^{2(k-1)}\}}(A) \cd b^* a^* . 
\end{split}
\]
\end{lemma}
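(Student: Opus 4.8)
The plan is to compute $\pa_1\big( \chi_{\{q^{2k}\}}(A) \big)$ by applying the closed derivation $\pa_1$ to the explicit limit formula from Lemma~\ref{l:indicator}, exploiting the fact that $\pa_1$ (as the closure-derived extension of $[D_q, \pi(\cd)]$) is closed and hence commutes with norm-convergent limits provided the derivatives also converge. First I would show by induction on $k$ that $\chi_{\{q^{2k}\}}(A) \in \T{Lip}_{D_q}(S_q^2)$. The base case $k=0$ follows from rewriting $\chi_{\{q^0\}}(A) = \lim_n A^n$ (the $j$-sum in Lemma~\ref{l:indicator} is empty) and checking that the sequence $\pa_1(A^n) = \frac{1-q^{2n}}{1-q^2} A^{n-1} b^* a^*$ from Lemma~\ref{l:derpotens} converges in $\B L(H_+,H_-)$; since $\|A^{n-1}\|_\infty = q^{2(n-1)} \to 0$ and $\frac{1-q^{2n}}{1-q^2} \to \frac{1}{1-q^2}$, the limit is in fact $0$, which matches the claimed formula since $\chi_{\{q^{-2}\}}(A) = 0$ and $\chi_{\{q^0\}}(A)\cd\frac{1}{1-q^2}b^*a^*$ — wait, one must be careful: the $k=0$ formula gives $\pa_1(\chi_{\{1\}}(A)) = \frac{1}{1-q^2}\chi_{\{1\}}(A) b^* a^*$, so the relevant computation is $\lim_n \frac{1-q^{2n}}{1-q^2} A^{n-1} b^* a^* = \frac{1}{1-q^2}\big(\lim_n A^{n-1}\big) b^* a^* = \frac{1}{1-q^2}\chi_{\{1\}}(A) b^* a^*$, using that $\lim_n A^n = \chi_{\{1\}}(A)$ in norm and that right-multiplication by the bounded operator $b^* a^*$ is norm-continuous.

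For the inductive step, assume $\chi_{\{q^{2(k-j)}\}}(A) \in \T{Lip}_{D_q}(S_q^2)$ for all $j \in \{1,\ldots,k\}$ with the stated derivative formulas. Apply $\pa_1$ term-by-term to the partial sums
\[
P_n := \frac{1}{q^{2kn}} A^n - \sum_{j=1}^k \frac{1}{q^{2jn}} \chi_{\{q^{2(k-j)}\}}(A),
\]
which lie in $\T{Lip}_{D_q}(S_q^2)$. Using Lemma~\ref{l:derpotens} for the first term and the induction hypothesis for the rest, $\pa_1(P_n)$ is an explicit expression; I would then show $\{P_n\}$ converges in norm to $\chi_{\{q^{2k}\}}(A)$ (this is Lemma~\ref{l:indicator}) and $\{\pa_1(P_n)\}$ converges in $\B L(H_+,H_-)$. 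Since each $\chi_{\{q^{2(k-j)}\}}(A)$ has norm-closed range supported on a single basis vector $e_{k-j}$, the products with $b^* a^*$ simplify; the key cancellations should leave only the two ``neighbouring'' terms with coefficients $\frac{1}{q^{2k}(1-q^2)}$ and $-\frac{1}{q^{2(k-1)}(1-q^2)}$. Closedness of $\pa_1$ then gives $\chi_{\{q^{2k}\}}(A) \in \T{Lip}_{D_q}(S_q^2)$ together with the formula.

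The main obstacle is the bookkeeping in verifying that $\pa_1(P_n)$ actually converges and computing its limit: one must track how the factors $\frac{1-q^{2mn}}{1-q^2}$ (from powers $A^{mn}$ appearing after expanding, or more precisely from $\pa_1(A^n)$) interact with the subtracted indicator terms, and confirm that all contributions except the two advertised ones cancel or vanish in the limit. A cleaner route, which I would pursue in parallel, is to avoid Lemma~\ref{l:indicator} and instead write $\chi_{\{q^{2k}\}}(A) = g_k(A)$ for a suitable polynomial-approximable continuous function and use the functional-calculus identity $\pa_1(g(A)) = $ (a $q$-difference quotient of $g$) $\cd \pa_1(A)$ together with the commutation relations~\eqref{eq:bBrel}; since $b^* a^* A = q^2 A \cdot b^* a^*$ one expects $\pa_1(g(A)) = \frac{g(A) - g(q^2 A)}{(1-q^2)A} \cd \chi_{\{1\}^c}(A)\cdot b^* a^*$ type formula, and evaluating this at $g = \chi_{\{q^{2k}\}}$ directly yields the two-term answer. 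Either way, the proof is essentially a continuity-plus-algebra argument, with no deep new ingredient beyond closedness of the derivation and the explicit structure of $\T{Sp}(A)$.
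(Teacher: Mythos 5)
Your proposal follows essentially the same route as the paper: approximate $\chi_{\{q^{2k}\}}(A)$ by the sequence $P_n$ from Lemma~\ref{l:indicator}, differentiate $P_n$ using Lemma~\ref{l:derpotens} and the (strong) induction hypothesis, identify the limit of the derivatives, and invoke closedness of the derivation. Two small points deserve attention. First, your initial claim that $\|A^{n-1}\|_\infty = q^{2(n-1)} \to 0$ is false, since $1 \in \T{Sp}(A)$ forces $\|A^n\| = 1$ for all $n$; you do catch and correct this, and the corrected limit $\frac{1}{1-q^2}\chi_{\{1\}}(A)\, b^* a^*$ is the right one. Second, what is closed is the full derivation $d_q$ on $H_+ \op H_-$, not $\pa_1$ in isolation, so to conclude $P := \lim_n P_n \in \T{Lip}_{D_q}(S_q^2)$ you must also verify convergence of $\pa_2(P_n)$; since each $P_n$ is selfadjoint this follows from $\pa_2(P_n) = -\pa_1(P_n)^*$, which is exactly the "taking adjoints" step in the paper's proof and should be made explicit. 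The remaining content is the cancellation bookkeeping in the inductive step, which you correctly identify but do not carry out; the paper's computation shows that after reindexing, $\pa_1(P_n)$ equals $\frac{1}{q^{2k}(1-q^2)} P_{n-1}^{(k)} b^* a^* - \frac{1}{q^{2(k-1)}(1-q^2)} P_{n-1}^{(k-1)} b^* a^*$ where $P^{(k)}_{n-1}$, $P^{(k-1)}_{n-1}$ are the approximants of $\chi_{\{q^{2k}\}}(A)$ and $\chi_{\{q^{2(k-1)}\}}(A)$, whence the limit. Your alternative via the $q$-difference quotient $\pa_1(g(A)) = \frac{g(A)-g(q^2A)}{(1-q^2)A}\, b^* a^*$ (valid for polynomials $g$ with $g(0)=0$, using $b^*a^*A = q^2 A\, b^*a^*$) does reproduce the two-term formula when evaluated formally at $g = \chi_{\{q^{2k}\}}$, but extending it from polynomials to indicator functions still requires the same closedness argument, so it is not a genuine shortcut.
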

\begin{proof}
The proof runs by strong induction on $k \in \nn \cup \{0\}$.

For each $n \in \nn$:
\begin{equation*}
\pa_1 (A^n) =\frac{1 - q^{2n}}{1 - q^2} A^{n-1} b^* a^*  , 
\end{equation*}
which converges to $\frac{1}{1-q^2}\cd \chi_{\{1\}}(A)\cdot b^* a^*$ by Lemma \ref{l:indicator}. By taking adjoints and using the relation $\pa_2(A^n) = - \pa_1(A^n)^*$ we also obtain that $\pa_2(A^n)$ converges to $-\frac{1}{1-q^2}\cd a b \cd \chi_{\{1\}}(A)$. Using that the derivation $d_q : \T{Lip}_{D_q}(S_q^2) \to \B L(H_+ \op H_-)$ is closed, this proves the statement for $k = 0$.
%

Now, let $k\in \nn$ and suppose that our statement is true for all $j \in \{0,1,\ldots,k - 1\}$. For each $n \geq 2$ we then compute that
\begin{align*}
 & \pa_1\left(\frac{1}{q^{2k n}} A^n - \sum_{j = 1}^k \frac{1}{q^{2jn}} \chi_{ \{ q^{2(k-j)}\}}(A) \right) \\
 & \Q =  \frac{1}{q^{2k n}}\cd  \frac{1 - q^{2n}}{1 - q^2} A^{n-1} \cd b^* a^* 
 - \sum_{j = 1}^{k} \frac{1}{q^{2jn}} \frac{1}{q^{2(k-j)} (1 - q^2)} \chi_{ \{q^{2(k-j)}\}}(A) \cd b^* a^*  \\
& \QQ - \sum_{j = 1}^{k} \frac{1}{q^{2(k-j-1)} (1 - q^2)} \chi_{ \{q^{2(k-j-1)}\}}(A)  \cd b^* a^* \\
 & \Q = \frac{1}{q^{2k}(1-q^2)} \bigg(\frac{1}{q^{2k(n-1)}}A^{n-1}  - \sum_{j = 1}^{k} \frac{1}{q^{2j(n-1)}} \chi_{\{q^{2(k - j)}\}}(A) 
 \bigg) \cd b^* a^* \\
& \QQ - \frac{1}{q^{2(k - 1)}(1 - q^2)} \bigg( \frac{1}{q^{2(k - 1)(n-1)}} A^{n-1} - \sum_{j = 1}^{k - 1} \frac{1}{q^{2j(n-1)}} 
\chi_{ \{q^{2(k-j-1)}\}}(A) \bigg) \cd b^* a^*  ,\\
\end{align*}
which, again by Lemma \ref{l:indicator}, converges to
\[
\frac{1}{q^{2k} (1 - q^2)} \chi_{ \{q^{2k}\}}(A) \cd b^* a^*
- \frac{1}{q^{2(k-1)} (1 - q^2)} \chi_{ \{q^{2(k-1)}\}}(A) \cd b^* a^* .
\]
By taking adjoints we then obtain that
\[
\pa_2\left(\frac{1}{q^{2k n}} A^n - \sum_{j = 1}^{k} \frac{1}{q^{2jn}} \chi_{ \{ q^{2(k-j)}\}}(A) \right)
\]
converges to
\[
\frac{1}{q^{2(k-1)} (1 - q^2)} a b \cd \chi_{ \{q^{2(k-1)}\}}(A)  - \frac{1}{q^{2k} (1 - q^2)} a b \cd \chi_{ \{q^{2k}\}}(A) .
\]
Using again that the derivation $d_q : \T{Lip}_{D_q}(S_q^2) \to \B L(H_+ \op H_-)$ is closed, this proves the lemma.
\end{proof}

\subsection{Commmutator relations for indicator functions}
Before continuing with our computation of the derivation
\[
\pa_1 : \T{Lip}_{D_q}(S_q^2) \to \B L(H_+, H_-),
\]
we need to understand the relationship between the spectral projections $\chi_{\{q^{2k}\}}(A)$, $k \in \nn \cup \{0\}$, and the element $a^* \in SU_q(2)$.
%
%

\begin{lemma}\label{l:dec-comm}
Let $k \in \nn \cup \{0\}$. It holds that
\[
\chi_{\{q^{2k}\}}(A) \cd a^* = a^* \cd \chi_{\{q^{2(k+1)}\}}(A)  .
\]
\end{lemma}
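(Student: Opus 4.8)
\emph{Proof proposal.}

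The plan is to reduce the claimed identity to the single commutation relation $a^* A = q^2 A a^*$ recorded in Equation \eqref{eq:bBrel} and then propagate it through the continuous functional calculus of the positive element $A \in S_q^2 \subseteq SU_q(2)$.

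First I would prove by a one-line induction on $n$ that $a^* A^n = q^{2n} A^n a^*$ for every $n \in \nn \cup \{0\}$; the base case is trivial and the inductive step applies $a^* A = q^2 A a^*$ once. Linearity then gives, for any polynomial $p(t) = \sum_n c_n t^n$,
\[
a^* \cd p(A) = \Big( \sum_n c_n q^{2n} A^n \Big) \cd a^* = (p \circ \rho)(A) \cd a^*,
\]
where $\rho(t) := q^2 t$.

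Next I would pass from polynomials to arbitrary continuous functions. Since $\rho(0) = 0$ and $\rho(q^{2j}) = q^{2(j+1)}$, the affine map $\rho$ restricts to a continuous self-map of $\T{Sp}(A) = \{0\} \cup \{ q^{2j} \mid j \in \nn \cup \{0\}\}$, so $f \circ \rho \in C(\T{Sp}(A))$ whenever $f \in C(\T{Sp}(A))$. Approximating a given $f$ uniformly by polynomials and using continuity of the functional calculus together with the boundedness of left and right multiplication by $a^*$ upgrades the previous identity to
\[
a^* \cd f(A) = (f \circ \rho)(A) \cd a^* \Q \T{for all } f \in C(\T{Sp}(A)).
\]
Finally I would specialize to $f = \chi_{\{q^{2(k+1)}\}}$, which is continuous on $\T{Sp}(A)$: for $t \in \T{Sp}(A)$ one has $\chi_{\{q^{2(k+1)}\}}(\rho(t)) = 1$ exactly when $q^2 t = q^{2(k+1)}$, i.e.\ when $t = q^{2k}$, so $\chi_{\{q^{2(k+1)}\}} \circ \rho = \chi_{\{q^{2k}\}}$ in $C(\T{Sp}(A))$. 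This yields $a^* \cd \chi_{\{q^{2(k+1)}\}}(A) = \chi_{\{q^{2k}\}}(A) \cd a^*$, as claimed.

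No genuine difficulty is expected; the one thing worth being careful about is the bookkeeping of which way the deformation shifts the spectral label, i.e.\ that $\chi_{\{q^{2(k+1)}\}}$ must sit on the right and $\chi_{\{q^{2k}\}}$ on the left, a direction dictated entirely by $a^* A = q^2 A a^*$ and confirmed by the elementary identity $\chi_{\{q^{2(k+1)}\}} \circ \rho = \chi_{\{q^{2k}\}}$. (One could instead argue in a faithful representation of $SU_q(2)$, where $\chi_{\{q^{2k}\}}(A)$ becomes a rank-one spectral projection and $a^*$ a weighted shift, but the functional-calculus argument above avoids choosing such a representation.)
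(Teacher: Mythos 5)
Your argument is correct, but it takes a genuinely different route from the paper's. The paper proves the identity by strong induction on $k$: it writes the spectral projections as the norm limits of Lemma \ref{l:indicator}, commutes $a^*$ past the powers $A^n$ using $A^n a^* = q^{-2n} a^* A^n$, absorbs the lower-order projections via the inductive hypothesis, and disposes of the stray term $a^*\chi_{\{1\}}(A)$ by the separate observation that it vanishes because $aa^* = 1 - A$. You instead establish the single covariance identity $a^* f(A) = (f\circ\rho)(A)\, a^*$ for all $f \in C(\T{Sp}(A))$, with $\rho(t) = q^2 t$, by polynomial approximation, and then specialize to $f = \chi_{\{q^{2(k+1)}\}}$. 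The one point that genuinely needs checking there is that $\rho$ maps $\T{Sp}(A)$ into itself, so that uniform convergence $p_j \to f$ on $\T{Sp}(A)$ transports to $p_j\circ\rho \to f\circ\rho$ on $\T{Sp}(A)$; you verify this. Your route eliminates both the induction and any appeal to Lemma \ref{l:indicator}, and it subsumes the paper's auxiliary fact $a^*\chi_{\{1\}}(A) = 0$ as the special case $\chi_{\{1\}}\circ\rho \equiv 0$ on $\T{Sp}(A)$ (since $q^{-2}\notin\T{Sp}(A)$). What each approach buys: yours is more general (it handles every continuous function of $A$ at once, at the mild cost of invoking continuity of the functional calculus), while the paper's stays entirely inside the explicit limit formulas it has already set up and reuses elsewhere, so no new machinery is introduced.
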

\begin{proof}
First, consider $k=0$. Notice that $a^*\chi_{\{1\}}(A)=0$ since $a a^* = 1 - A$. We gather for all $n \in \nn$ that
\[
A^n a^*  
= a^* \frac{A^n}{q^{2n}} 
= a^* \frac{A^n}{q^{2n}} - a^*\frac{\chi_{\{1\}}(A)}{q^{2n}}
= a^* (\frac{A^n}{q^{2n}} -\frac{\chi_{\{1\}}(A)}{q^{2n}})  .
\]
Since $\chi_{\{1\}}(A) = \lim_{n \to \infty} A^n$ and $\chi_{\{q^2\}}(A) = \lim_{n \to \infty}(\frac{A^n}{q^{2n}} -\frac{\chi_{\{1\}}(A)}{q^{2n}})$, by Lemma \ref{l:indicator}, we have that $\chi_{\{1\}}(A) a^* = a^* \chi_{\{q^2\}}(A)$.

Next, fix $k \in \nn$ and assume the statement of the lemma is true for all $0 \leq m \leq k$. We gather for all $n \in \nn$ that
\begin{equation*}
\begin{split}
& \left(\frac{1}{q^{2(k+1)n}}A^n - \sum_{j=1}^{k+1} \frac{1}{q^{2jn}}\chi_{\{q^{2(k+1-j)}\}}(A) \right) a^*\\
& \Q =a^* \left( \frac{1}{q^{2(k+2)n}}A^n - \sum_{j=1}^{k+1} \frac{1}{q^{2jn}}\chi_{\{q^{2(k+2-j)}\}}(A) \right) \\
& \Q = a^* \left( \frac{1}{q^{2(k+2)n}}A^n - \sum_{j=1}^{k+1} \frac{1}{q^{2jn}}\chi_{\{q^{2(k+2-j)}\}}(A) \right) - a^*\frac{1}{q^{2(k+2)n}}\chi_{\{1\}}(A)\\
& \Q =a^* \left( \frac{1}{q^{2(k+2)n}}A^n - \sum_{j=1}^{k+2} \frac{1}{q^{2jn}}\chi_{\{q^{2(k+2-j)}\}}(A) \right)  ,
\end{split}
\end{equation*}
and thus taking limits, using Lemma \ref{l:indicator}, we have proved the result by strong induction.
\end{proof}

\subsection{Derivatives of matrix units}\label{ss:derbasis}
We are now ready to compute the value of the derivation 
\[
\pa_1 : \T{Lip}_{D_q}(S_q^2) \to \B L(H_+,H_-)
\]
when applied to the matrix units $f_{n,k} \in S_q^2$, $n,k \in \nn \cup \{0\}$, see Equation \eqref{eq:onb} for the definition. We start by computing these values without considering the normalizing constants $C_{n,k} \in (0,\infty)$.
%

\begin{lemma}\label{l:derbasI}
Let $k \in \nn \cup \{0\}$. We have the identity
\begin{equation}\label{eq:ngeq1}
\begin{split}
& (1 - q^2) \cd \pa_1\big( B^n \chi_{ \{q^{2k}\}}(A) \big) \\
& \Q = B^{n-1} \cd \Big( (q^{-2(k+1)} - 1) \cd  \chi_{ \{q^{2(k+1)}\}}(A)
+ (q^{2n} - q^{-2k}) \cd \chi_{ \{q^{2k}\}}(A) \Big) \cd (b^*)^2
\end{split}
\end{equation}
for all $n \geq 1$ and the identity
\begin{equation}\label{eq:nleqk}
\begin{split}
& (1 - q^2) \cd \pa_1\big( (B^*)^n \chi_{ \{q^{2k}\}}(A) \big) \\
& \Q = (B^*)^{n+1} \cd \Big( q^{-4k-2} \cd \chi_{ \{q^{2(k+1)}\}}(A) - q^{2(n+1 -2k)}  \cd \chi_{ \{q^{2k}\}}(A) \Big) \cd (b^*)^2
\end{split}
\end{equation}
for all $n \in \{0,\ldots,k\}$.
\end{lemma}
\begin{proof}
We start by taking care of the case where $n = 0$ in Equation \eqref{eq:nleqk}. It follows from Lemma \ref{l:derind} and Lemma \ref{l:dec-comm} that
\begin{equation}\label{eq:casezero}
\begin{split}
& (1 - q^2) \cd \pa_1( \chi_{ \{q^{2k}\}}(A) ) 
= q^{-2k} b^* a^* \chi_{\{q^{2(k+1)}\}}(A) - q^{-2(k-1)} b^* a^* \chi_{\{q^{2k}\}}(A) \\
&\Q = q^{-2k} b^* a^*(q^{-2(k+1)} A \chi_{\{q^{2(k+1)}\}}(A) )- q^{-2(k-1)} b^* a^*(q^{-2k}A \chi_{\{q^{2k}\}}(A)) \\
& \Q = q^{-4k - 2} B^* \chi_{\{q^{2(k+1)}\}}(A) (b^*)^2  - q^{-4k + 2}  B^*  \chi_{\{q^{2k}\}}(A) (b^*)^2 .
\end{split}
\end{equation}

We continue by considering Equation \eqref{eq:ngeq1}. For $n \geq 1$, we apply Equation \eqref{eq:casezero} together with Lemma \ref{l:derpotens} to compute that
\[
\begin{split}
& (1 - q^2) \cd \pa_1( B^n \chi_{\{q^{2k}\}}(A)) \\
& \Q = - (1 - q^{2n}) \cd  B^{n-1} \chi_{\{q^{2k}\}}(A) (b^*)^2 
+ q^{-2(k+1)} (1 - q^{2(k+1)}) B^{n-1}\chi_{\{q^{2(k+1)}\}}(A) (b^*)^2 \\ 
& \QQ - q^{-2k} (1 - q^{2k})   B^{n-1} \chi_{\{q^{2k}\}}(A) (b^*)^2 \\
& \Q = ( q^{2n} - q^{-2k} ) B^{n-1} \chi_{\{q^{2k}\}}(A) (b^*)^2
+ (q^{-2(k+1)} - 1)  B^{n-1} \chi_{\{q^{2(k+1)}\}}(A) (b^*)^2  .
\end{split}
\]

Finally, we study the remaining part of Equation \eqref{eq:nleqk}. For $n \in \{1,\ldots,k\}$, we again apply Equation \eqref{eq:casezero} and Lemma \ref{l:derpotens} to obtain that
\[
\begin{split}
& (1 - q^2) \cd \pa_1( (B^*)^n \chi_{\{q^{2k}\}}(A)) \\
& \Q = q^{-1} (1 - q^{2n}) \cd  (B^*)^{n-1} (a^*)^2 \chi_{\{q^{2k}\}}(A)
+ q^{-4k-2} (B^*)^{n+1} \chi_{\{q^{2(k+1)}\}}(A) (b^*)^2 \\ 
& \QQ - q^{-4k + 2} (B^*)^{n+1} \chi_{\{q^{2k}\}}(A) (b^*)^2 \\
& \Q = -q^{-4k + 2 + 2n} (B^*)^{n+1} \chi_{\{q^{2k}\}}(A) (b^*)^2 
+ q^{-4k-2} (B^*)^{n+1} \chi_{\{q^{2(k+1)}\}}(A) (b^*)^2,
\end{split}
\]
where we have also used that $(a^*)^2 \chi_{\{q^{2k}\}}(A) = q^{-4k + 3} (B^*)^2 \chi_{\{q^{2k}\}}(A) (b^*)^2$. This proves the lemma. 
\end{proof}

The next lemma provides the desired formula for the value of the derivation $\pa_1$ on the matrix units:

\begin{lemma}\label{l:derbasII}
Let $k,n \in \nn \cup \{0\}$. We have the identity
\[
\begin{split}
(1 - q^2) \cd \pa_1(f_{n,k}) & = q^{-3k-2} \sqrt{1 - q^{2(k+1)}} \cd f_{n,k+1} \cd (b^*)^2 \\
& \Q - q^{-2k - n + 1} \sqrt{1 - q^{2n}} \cd f_{n-1,k} \cd (b^*)^2 ,
\end{split}
\]
where we set $f_{-1,k}:=0$ for all $k \in \nn \cup \{0\}$.
\end{lemma}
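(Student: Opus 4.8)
The plan is to reduce the computation of $\pa_1(f_{n,k})$ to the two cases already handled in Lemma \ref{l:derbasI}, namely the derivatives of $B^n \chi_{\{q^{2k}\}}(A)$ for $n \geq 1$ and of $(B^*)^n \chi_{\{q^{2k}\}}(A)$ for $n \in \{0,\dots,k\}$, and then simply divide by the appropriate normalizing constants $\sqrt{C_{n,k}}$ from Equation \eqref{eq:normal}. Recall that $f_{n,k} = \frac{1}{\sqrt{C_{n,k}}} B^{n-k}\chi_{\{q^{2k}\}}(A)$ for $n \geq k$ and $f_{n,k} = \frac{1}{\sqrt{C_{n,k}}} (B^*)^{k-n}\chi_{\{q^{2k}\}}(A)$ for $n < k$. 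So the task is purely bookkeeping: take the formulas from Lemma \ref{l:derbasI}, re-expand $\chi_{\{q^{2(k+1)}\}}(A)$ and $\chi_{\{q^{2k}\}}(A)$ in terms of the matrix units $f_{n,k+1}$ and $f_{n-1,k}$, and check that the coefficients collapse to $q^{-3k-2}\sqrt{1-q^{2(k+1)}}$ and $-q^{-2k-n+1}\sqrt{1-q^{2n}}$.

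I would organize the proof by cases according to the sign of $n-k$.

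\emph{Case $n > k$.} Here $f_{n,k} = \frac{1}{\sqrt{C_{n,k}}} B^{n-k}\chi_{\{q^{2k}\}}(A)$. Apply Lemma \ref{l:derbasI} with the exponent $n-k$ in place of $n$:
\[
(1-q^2)\pa_1\big(B^{n-k}\chi_{\{q^{2k}\}}(A)\big) = B^{n-k-1}\Big((q^{-2(k+1)}-1)\chi_{\{q^{2(k+1)}\}}(A) + (q^{2(n-k)}-q^{-2k})\chi_{\{q^{2k}\}}(A)\Big)(b^*)^2.
\]
Now $B^{n-k-1}\chi_{\{q^{2(k+1)}\}}(A) = \sqrt{C_{n,k+1}}\, f_{n,k+1}$ (note $n \geq k+1$ here, so this matrix unit is of the $B$-type), and $B^{n-k-1}\chi_{\{q^{2k}\}}(A) = \sqrt{C_{n-1,k}}\, f_{n-1,k}$. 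Using the recursion $C_{n,k} = q^{2k}(1-q^{2(k+1)})\, C_{n,k+1}$ and $C_{n,k} = q^{2(n-1)}(1-q^{2n})\, C_{n-1,k}$ coming from \eqref{eq:normal}, dividing through by $\sqrt{C_{n,k}}$ turns $(q^{-2(k+1)}-1) \cdot \frac{\sqrt{C_{n,k+1}}}{\sqrt{C_{n,k}}}$ into $q^{-3k-2}\sqrt{1-q^{2(k+1)}}$ and $(q^{2(n-k)}-q^{-2k}) \cdot \frac{\sqrt{C_{n-1,k}}}{\sqrt{C_{n,k}}}$ into $-q^{-2k-n+1}\sqrt{1-q^{2n}}$ (after factoring $q^{2(n-k)}-q^{-2k} = -q^{-2k}(1-q^{2n})$). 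This is the asserted identity.

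\emph{Case $n \leq k$.} Here $f_{n,k} = \frac{1}{\sqrt{C_{n,k}}}(B^*)^{k-n}\chi_{\{q^{2k}\}}(A)$, with $k-n \in \{0,\dots,k\}$, so the second identity of Lemma \ref{l:derbasI} applies with exponent $k-n$:
\[
(1-q^2)\pa_1\big((B^*)^{k-n}\chi_{\{q^{2k}\}}(A)\big) = (B^*)^{k-n+1}\Big(q^{-4k-2}\chi_{\{q^{2(k+1)}\}}(A) - q^{2(k-n+1-2k)}\chi_{\{q^{2k}\}}(A)\Big)(b^*)^2.
\]
Now $(B^*)^{k-n+1}\chi_{\{q^{2(k+1)}\}}(A) = \sqrt{C_{n,k+1}}\, f_{n,k+1}$ (here $n \leq k < k+1$, $B^*$-type), and $(B^*)^{k-n+1}\chi_{\{q^{2k}\}}(A) = \sqrt{C_{n-1,k}}\, f_{n-1,k}$ when $n \geq 1$ (again $B^*$-type since $n-1 < k$), while for $n = 0$ the factor $(B^*)^{k+1}\chi_{\{q^{2k}\}}(A)$ vanishes because $(B^*)^{k+1}$ annihilates $e_k$ in the representation of Proposition \ref{p:comisom}, consistent with the convention $f_{-1,k} = 0$. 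Again using the two recursions for $C_{n,k}$ and simplifying the powers of $q$ — in particular $q^{2(k-n+1-2k)} = q^{-2k-2n+2} = q^{-2k}\cdot q^{-2n+2}$, and pulling out a factor $(1-q^{2n})$ from the ratio $\sqrt{C_{n-1,k}/C_{n,k}}$ and a factor $(1-q^{2(k+1)})$ from $\sqrt{C_{n,k+1}/C_{n,k}}$ — yields exactly the same two coefficients $q^{-3k-2}\sqrt{1-q^{2(k+1)}}$ and $-q^{-2k-n+1}\sqrt{1-q^{2n}}$. One should also double-check the edge case $n = k$: then $B^{n-k} = (B^*)^{k-n} = 1$, both expansions above must agree, and indeed plugging $n = k$ into either gives the same answer, so there is no inconsistency.

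The main obstacle is not conceptual but combinatorial: keeping the powers of $q$ straight through the division by $\sqrt{C_{n,k}}$, correctly identifying which of the two pieces $\chi_{\{q^{2(k+1)}\}}(A)$, $\chi_{\{q^{2k}\}}(A)$ produces a $B$-type versus a $B^*$-type matrix unit in each regime of $n$ relative to $k$, and verifying that the $n = 0$ boundary term genuinely drops out so that the stated convention $f_{-1,k} := 0$ is the right one. A clean way to avoid case-chasing on matrix-unit types is to absorb everything into the single statement $B^m \chi_{\{q^{2k}\}}(A) = \sqrt{C_{k+m,k}}\, f_{k+m,k}$ valid for all $m \in \zz$ (with $B^{-1}$ interpreted appropriately via $B^*$, and the product read off from Lemma \ref{l:matrix} and Proposition \ref{p:comisom}), after which the two displays of Lemma \ref{l:derbasI} become a single identity and the constant-chasing is uniform.
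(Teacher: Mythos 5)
Your proposal is correct and follows essentially the same route as the paper: apply Lemma \ref{l:derbasI} in the two regimes $n>k$ and $0\leq n\leq k$, convert the resulting $B$- and $B^*$-type expressions back into matrix units, and use the ratios $C_{n,k+1}/C_{n,k}$ and $C_{n-1,k}/C_{n,k}$ from Equation \eqref{eq:normal} to collapse the coefficients. Your extra checks of the $n=0$ boundary term and the $n=k$ overlap are consistent with (and slightly more explicit than) what the paper does.
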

\begin{proof}
We recall the definition of the normalizing constants $C_{n,k} \in (0,\infty)$ from Equation \eqref{eq:normal}.

For $n > k$ we apply Lemma \ref{l:derbasI} and the fact that 
\[
\frac{C_{n,k+1}}{ C_{n,k}} = \frac{1}{q^{2k}(1 - q^{2(k+1)})} \Q \T{and} \Q
\frac{C_{n-1,k}}{C_{n,k}} = \frac{1}{q^{2(n-1)}(1 - q^{2n})},
\]
to compute that
\[
\begin{split}
(1 - q^2) \cd \pa_1(f_{n,k}) & = \frac{C_{n,k+1}^{1/2}}{C_{n,k}^{1/2}} \cd (q^{-2(k+1)} -1) \cd f_{n,k+1} \cd (b^*)^2 \\
& \Q + \frac{C_{n-1,k}^{1/2}}{C_{n,k}^{1/2} } \cd (q^{2(n-k)} - q^{-2k}) \cd f_{n-1,k} \cd (b^*)^2 \\
& = q^{-3k - 2} \sqrt{1 - q^{2(k+1)}} \cd f_{n,k+1} \cd (b^*)^2 \\
& \Q - q^{-2k -n + 1} \sqrt{1 - q^{2n}} \cd  f_{n-1,k} \cd (b^*)^2 .
\end{split}
\]

For $0 \leq n \leq k$ we apply Lemma \ref{l:derbasI} one more time together with the identities
\[
\frac{C_{n,k+1}}{C_{n,k}} = q^{2k}(1 - q^{2(k+1)}) \Q \T{and} \Q
\frac{C_{n-1,k}}{C_{n,k}} = q^{2(n-1)}(1 - q^{2n})
\]
to obtain that
\[
\begin{split}
(1 - q^2) \cd \pa_1(f_{n,k}) & = \frac{C_{n,k+1}^{1/2}}{C_{n,k}^{1/2}} q^{-4k-2} \cd f_{n,k+1} \cd (b^*)^2 \\
& \Q - \frac{C_{n-1,k}^{1/2}}{C_{n,k}^{1/2}} q^{2(-n + 1 - k)} \cd f_{n-1,k} \cd (b^*)^2 \\
& = q^{-3k-2} \sqrt{ 1 - q^{2(k+1)}} \cd f_{n,k+1} \cd (b^*)^2 \\
& \Q - q^{-n + 1 - 2k} \sqrt{ 1 - q^{2n}} \cd f_{n-1,k} \cd (b^*)^2.
\end{split}
\]
This proves the present lemma.
\end{proof}

\section{Totally bounded subspaces of the fibers}
Let us fix a $k \in \nn \cup \{0\}$. We define the subspace
\[
Y_k :=  S_q^2 \cd \chi_{\{q^{2k}\}}(A) \su S_q^2
\]
and notice that $Y_k$ is automatically closed in the norm on $S_q^2$ since $\chi_{\{q^{2k}\}}(A) \in S_q^2$ is a projection. We think of $Y_k \su S_q^2$ as the fiber over the point $q^{2k} \in \T{Sp}(A)$.

We are going to study each of these closed subspaces, paying particular attention to the relationship between the linear map
\[
\pa_1 : \T{Lip}_{D_q}(S_q^2) \cap Y_k  \to \B L(H_+,H_-)
\]
and the norm on $Y_k$. We aim for the following:

\begin{thm}\label{t:0-totbou}
Let $k \in \nn \cup \{0\}$. The subspace
\[
\big\{ x \in \T{Lip}_{D_q}(S_q^2) \cap Y_k 
\mid \| \pa_1(x) \|_\infty  \leq 1 \big\} \su Y_k
\]
is totally bounded.
\end{thm}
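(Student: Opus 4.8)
The plan is to reduce the problem to a statement about the fiber $Y_k$ as a sequence space and then exploit the explicit formula for $\pa_1(f_{n,k})$ from Lemma \ref{l:derbasII}. First I would note that by Proposition \ref{p:comisom} and Lemma \ref{l:matrix}, the fiber $Y_k = S_q^2 \cd \chi_{\{q^{2k}\}}(A)$ is spanned (after taking norm-closure) by the matrix units $f_{n,k}$, $n \in \nn \cup \{0\}$, and is $*$-isomorphically the $k$-th column of the compact operators: concretely, under $\pi$ the element $\sum_n \lambda_n f_{n,k}$ acts as the rank-one-into-$e_k$ map $e_k \mapsto \sum_n \lambda_n e_n$, so that $\big\| \sum_n \lambda_n f_{n,k} \big\| = \big( \sum_n |\lambda_n|^2 \big)^{1/2}$. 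Thus $Y_k$ is isometrically isomorphic to $\ell^2(\nn \cup \{0\})$, and any $x \in Y_k$ has a unique expansion $x = \sum_{n=0}^\infty \lambda_n(x) f_{n,k}$ with $\ell^2$-summable coefficients.

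Next I would set up the computation of $\| \pa_1(x) \|_\infty$ in terms of the coefficients $\lambda_n(x)$. Here one must be slightly careful: the formula in Lemma \ref{l:derbasII} gives $\pa_1(f_{n,k})$ as a linear combination of $f_{n,k+1} (b^*)^2$ and $f_{n-1,k}(b^*)^2$, and the two families $\{ f_{n,k+1}(b^*)^2 \}_n$ and $\{ f_{n-1,k}(b^*)^2 \}_n$ both map (after applying $\pi$, now as elements of $\B L(H_+, H_-)$ via the representation on $\C A_1$) into ranges determined by $\chi_{\{q^{2k+2}\}}(A)$ and $\chi_{\{q^{2k}\}}(A)$ respectively, which are orthogonal. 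So for $x = \sum_n \lambda_n f_{n,k} \in \T{Lip}_{D_q}(S_q^2) \cap Y_k$ one gets, by continuity of $\pa_1$ on the Lipschitz algebra and orthogonality, a clean lower bound of the form
\[
(1-q^2)^2 \| \pa_1(x) \|_\infty^2 \geq c_k \sum_{n=0}^\infty q^{-4k-2n} |\lambda_n(x)|^2
\]
for a constant $c_k > 0$ depending only on $k$ and $q$ (coming from the factors $\sqrt{1 - q^{2(k+1)}}$, $\sqrt{1-q^{2n}}$ and the norm of $(b^*)^2$ acting appropriately — one needs that these weights are bounded below, which holds since $1 - q^{2(k+1)} \geq 1 - q^2 > 0$ and the $n = 0$ term is controlled separately by the $f_{n,k+1}$ contribution). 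Since $q \in (0,1)$, the weights $q^{-4k-2n} \to \infty$ as $n \to \infty$.

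With this in hand the conclusion is the standard characterization of totally bounded sets in $\ell^2$: the image of $\big\{ x \in \T{Lip}_{D_q}(S_q^2) \cap Y_k \mid \| \pa_1(x) \|_\infty \leq 1 \big\}$ in $\ell^2(\nn \cup \{0\}) \cong Y_k$ is contained in the weighted ellipsoid $\big\{ (\lambda_n) \mid \sum_n q^{-4k-2n} |\lambda_n|^2 \leq (1-q^2)^2/c_k \big\}$, which is norm-compact in $\ell^2$ precisely because the reciprocal weights $q^{4k+2n}$ tend to $0$; hence the set is totally bounded. I would phrase this last part either by a direct $\varepsilon$-net argument (truncate at $n \geq N$ with tail bounded by $q^{2N} \cdot (1-q^2)^2/c_k$, then use finite-dimensionality of the head) or by citing compactness of a diagonal operator with vanishing eigenvalues.

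The main obstacle I anticipate is the bookkeeping in the second step: making precise how $\| \pa_1(x) \|_\infty$ controls the individual coefficients $\lambda_n(x)$, given that $\pa_1(f_{n,k})$ involves the ``twisting'' multiplication by $(b^*)^2$ on the right and lands in a space of operators rather than in $Y_{k+1}$ or $Y_k$ directly. One must verify that right multiplication by $(b^*)^2$ (together with the shift $f_{n,k} \mapsto f_{n,k+1}$ or $f_{n-1,k}$) does not destroy the orthogonality/norm estimates — equivalently, that evaluating the operator $\pa_1(x) \in \B L(H_+,H_-)$ on a suitably chosen unit vector in $H_+$ (e.g. the image of $a^{k+1} (b^*)^?$ or the relevant normalized vector supported over $q^{2(k+1)}$, resp. $q^{2k}$, in the spectrum picture) isolates the two sums and recovers $\sum_n q^{-4k-2n}|\lambda_n(x)|^2$ up to constants. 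Once the correct test vectors are identified this is a routine but somewhat delicate Hilbert space computation; everything else follows the pattern above.
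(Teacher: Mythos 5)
Your overall strategy is exactly the one the paper follows: identify $Y_k$ with $\ell^2(\nn\cup\{0\})$ via the matrix units (Proposition \ref{p:fibhilb}), use the formula of Lemma \ref{l:derbasII} together with the orthogonality of the two spectral projections $\chi_{\{q^{2k}\}}(A)$ and $\chi_{\{q^{2(k+1)}\}}(A)$ to convert $\|\pa_1(x)\|_\infty\leq 1$ into a weighted $\ell^2$ bound $\sum_n w_n|\lambda_n|^2\leq C$ with $w_n\sim q^{-2n}\to\infty$ (your exponent $q^{-4k-2n}$ differs from the paper's $q^{-2n+2}$ only by the $k$-dependent normalization $\|f_{n,k}(b^*)^2\|=q^{2k}$, which is harmless for fixed $k$), handle $n=0$ separately via the $f_{n,k+1}$ component, and conclude by compactness of the resulting ellipsoid.

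There is, however, one step where your justification would fail as written: you pass from the formula for $\pa_1(f_{n,k})$ to a formula for $\pa_1\big(\sum_n\lambda_n f_{n,k}\big)$ ``by continuity of $\pa_1$ on the Lipschitz algebra.'' The derivation $\pa_1$ is not continuous for the $C^*$-norm (it is only closable), and you have no a priori control on the Lipschitz seminorms of the partial sums $\sum_{n\leq N}\lambda_n f_{n,k}$, so you cannot simply differentiate the series term by term. This is precisely the point the paper flags as ``slightly tricky'': a general element of $\T{Lip}_{D_q}(S_q^2)\cap Y_k$ need not lie in the closure of $\C O(S_q^2)$ for the graph norm, and the range of $\pa_1$ on it is not known in advance. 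The fix (Lemma \ref{l:vertder} and Lemma \ref{p:vertder}) is to sandwich: compute $f_{k,n}\cdot\pa_1(x)\cdot\chi_{\{q^{2k}\}}(A)$ using the Leibniz rule on the \emph{finite} products $f_{k,n}\cdot x=\la_n\,\chi_{\{q^{2k}\}}(A)$, which isolates the $n$-th coefficient of $\pa_1(x)$ itself without ever differentiating the infinite sum; uniform bounds on the resulting partial sums plus closability of $d_q$ then yield the full expansion of $\pa_1(x)$. You correctly anticipate that the bookkeeping is the delicate part, but you locate the difficulty in the right multiplication by $(b^*)^2$ rather than in this interchange of $\pa_1$ with the series; supplying the sandwiching argument would close the gap and your proof would then coincide with the paper's.
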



The following structural result is a consequence of Proposition \ref{p:comisom} and Lemma \ref{l:matrix}:

\begin{prop}\label{p:fibhilb}
The closed subspaces $Y_k \su S_q^2$ and $Y_k \cd (b^*)^2 \su SU_q(2)$ are both isometrically isomorphic to the separable Hilbert space $\ell^2(\nn \cup \{0\})$. Under this identification, the sequence $\{ f_{n,k} \}_{n = 0}^\infty$ in $Y_k$ and the sequence $\{ f_{n,k} (b^*)^2 q^{-2k} \}$ in $Y_k \cd (b^*)^2$ are orthonormal bases.
\end{prop}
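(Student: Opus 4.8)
The plan is to show that multiplication by $(b^*)^2$ followed by the scaling $q^{-2k}$ implements the desired isometric isomorphism, and that $\{f_{n,k}\}_{n=0}^\infty$ is the preimage of an orthonormal basis. First I would use the concrete representation $\pi$ from Proposition \ref{p:comisom}: since $\chi_{\{q^{2k}\}}(A)$ is the rank-one projection $e_k \otimes e_k^*$ on $\ell^2(\nn\cup\{0\})$ (this is exactly what Lemma \ref{l:matrix}'s proof computes, namely $\pi(f_{n,k})(e_l)=\delta_{k,l}e_n$), the space $Y_k = S_q^2\cdot\chi_{\{q^{2k}\}}(A)$ consists precisely of the operators $\sum_n \lambda_n f_{n,k}$ with $(\lambda_n)\in\ell^2$, mapping $e_k$ to $\sum_n\lambda_n e_n$ and all other basis vectors to $0$. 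In other words $Y_k$, as a closed subspace of $S_q^2=\mathcal K(\ell^2)^+$, equals the ``$k$-th column'' $\mathcal K(\ell^2)\cdot(e_k\otimes e_k^*)$, and the operator norm of such a column operator equals the $\ell^2$-norm of its coefficient sequence. This gives the isometric identification $Y_k\cong\ell^2(\nn\cup\{0\})$ with $\{f_{n,k}\}_{n=0}^\infty$ as orthonormal basis.

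Next I would handle $Y_k\cdot(b^*)^2\subseteq SU_q(2)$. The key point is that right multiplication by $(b^*)^2$ is (up to the scalar $q^{-2k}$) isometric when restricted to $Y_k$. Concretely, for $x\in Y_k$ we have $x = x\chi_{\{q^{2k}\}}(A)$, and using $b^*A = Ab^*$ together with $bb^* = 1-a^*a$ and the defining relations of $SU_q(2)$ one computes $\chi_{\{q^{2k}\}}(A)\,b^2(b^*)^2\,\chi_{\{q^{2k}\}}(A) = q^{4k}\,\chi_{\{q^{2k}\}}(A)$; indeed $b^2(b^*)^2$ is a function of $A=b^*b$ (since $b,b^*$ commute), namely $b^2(b^*)^2 = (bb^*)^2$ up to commuting $b$'s past $b^*$'s, and on the range of $\chi_{\{q^{2k}\}}(A)$ one has $A = q^{2k}$, so this function evaluates to the scalar $q^{4k}$. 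Hence for $x\in Y_k$,
\[
\|x(b^*)^2\|^2 = \|x\, b^2 (b^*)^2 x^*\|_{\text{...}}
\]
— more cleanly: $(x(b^*)^2)^*(x(b^*)^2) = b^2 x^* x (b^*)^2 = b^2\chi_{\{q^{2k}\}}(A)x^*x\chi_{\{q^{2k}\}}(A)(b^*)^2$, and since $x^*x$ lies in the corner $\chi_{\{q^{2k}\}}(A)S_q^2\chi_{\{q^{2k}\}}(A)=\cc\cdot\chi_{\{q^{2k}\}}(A)$ we get $(x(b^*)^2)^*(x(b^*)^2) = \|x^*x\|\cdot b^2\chi_{\{q^{2k}\}}(A)(b^*)^2 = q^{4k}\|x\|^2\chi_{\{q^{2k}\}}(A)$ after another application of the scalar identity. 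Taking norms, $\|x(b^*)^2\| = q^{2k}\|x\|$. Therefore $x\mapsto q^{-2k}x(b^*)^2$ is an isometry from $Y_k$ onto $Y_k\cdot(b^*)^2$, and it carries the orthonormal basis $\{f_{n,k}\}$ to the orthonormal basis $\{q^{-2k}f_{n,k}(b^*)^2\}$. Combined with the first part, both $Y_k$ and $Y_k(b^*)^2$ are isometrically isomorphic to $\ell^2(\nn\cup\{0\})$ with the claimed orthonormal bases.

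The main obstacle I anticipate is making the inner-product (Hilbert space) structure precise and verifying orthonormality rather than just the Banach-space isometry: one must pin down \emph{which} inner product on $Y_k$ is meant (it is the one for which $\langle f_{n,k},f_{m,l}\rangle = \delta$ appropriately, equivalently the restriction of the natural Hilbert–Schmidt-type pairing on the column $\mathcal K(\ell^2)(e_k\otimes e_k^*)$), and check that the operator-norm on $Y_k$ genuinely coincides with that Hilbert norm — this works precisely because $Y_k$ is a \emph{column} in a matrix algebra, where the operator norm of $\sum_n\lambda_n(e_n\otimes e_k^*)$ is $(\sum_n|\lambda_n|^2)^{1/2}$. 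For $Y_k(b^*)^2$ the subtlety is that this subspace sits inside $SU_q(2)$, not inside a matrix algebra in an obvious way, so the cleanest route is to not analyze it intrinsically but simply transport the Hilbert structure from $Y_k$ via the isometry $x\mapsto q^{-2k}x(b^*)^2$, which is legitimate once the scalar identity $\chi_{\{q^{2k}\}}(A)\,b^2(b^*)^2\,\chi_{\{q^{2k}\}}(A) = q^{4k}\chi_{\{q^{2k}\}}(A)$ is established — and that identity reduces to a one-line functional-calculus computation because $b^*b$ and $bb^*$ commute and agree with $A$ on the relevant spectral subspace.
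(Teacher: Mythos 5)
Your proof is correct and follows essentially the route the paper indicates (the paper omits the argument, stating only that the proposition is a consequence of Proposition \ref{p:comisom} and Lemma \ref{l:matrix}): you identify $Y_k$ with the $k$-th column of the compacts via the concrete representation, and transport the Hilbert space structure to $Y_k\cdot(b^*)^2$ using that $b$ is normal, that $b^*$ commutes with $A$, and hence that $\chi_{\{q^{2k}\}}(A)\,b^2(b^*)^2\,\chi_{\{q^{2k}\}}(A)=q^{4k}\chi_{\{q^{2k}\}}(A)$. The only cosmetic point is that $b^2(b^*)^2=(bb^*)^2=A^2$ holds exactly (not merely ``up to commuting''), since $bb^*=b^*b$ is one of the defining relations.
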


We are going to present a computation of our derivation on general elements in the subspace $\T{Lip}_{D_q}(S_q^2) \cap Y_k \su Y_k$. This computation is slightly tricky because we a priory do not know much about the range of the derivation $\pa_1 : \T{Lip}_{D_q}(S_q^2) \to \B L(H_+,H_-)$. Indeed, the Lipschitz algebra $\T{Lip}_{D_q}(S_q^2)$ could be bigger than the $*$-algebra obtained by taking the closure of the derivation $d_q : \C O(S_q^2) \to SU_q(2)$.

\begin{lemma}\label{l:vertder}
Let $k \in \nn \cup \{0\}$ and $x = \sum_{m = 0}^\infty \la_m \cd f_{m,k} \in \T{Lip}_{D_q}(S_q^2) \cap Y_k$ be given. We have the identities:
\[
\begin{split}
& (1-q^2) \cd f_{k,n} \cd \pa_1(x) \cd \chi_{\{q^{2k}\}}(A) \\
& \Q = - q^{-2k-n}\sqrt{1 - q^{2(n+1)}}\cd \la_{n+1} \cd \chi_{\{q^{2k}\}}(A) (b^*)^2 \Q \mbox{and} \\
& (1 - q^2) \cd f_{k+1,n} \cd \pa_1(x) \cd \chi_{\{q^{2(k+1)}\}}(A) \\
& \Q = q^{-3k-2}\sqrt{1 - q^{2(k+1)}} \cd \la_n \cd \chi_{\{q^{2(k+1)}\}}(A) (b^*)^2  ,
\end{split}
\]
for all $n \in \nn \cup \{0\}$.
\end{lemma}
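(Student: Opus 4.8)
The naive route --- differentiating the norm-convergent expansion $x = \sum_m \la_m \cd f_{m,k}$ term by term --- is not available, since (as remarked just before the lemma) $\T{Lip}_{D_q}(S_q^2)$ may be strictly larger than the $*$-algebra obtained by closing $d_q$ on $\C O(S_q^2)$, so the series $\sum_m \la_m \cd \pa_1(f_{m,k})$ need not converge in $\B L(H_+,H_-)$. The plan is instead to \emph{compress} $x$ by matrix units and spectral projections \emph{before} differentiating, which collapses the series to a single surviving term.

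\medskip

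First, by Lemma~\ref{l:derbasII} the matrix units $f_{m,k}$ lie in $\T{Lip}_{D_q}(S_q^2)$, and by Lemma~\ref{l:derind} so do the spectral projections $\chi_{\{q^{2k}\}}(A)$; since $\T{Lip}_{D_q}(S_q^2)$ is a $*$-subalgebra containing $x$, every product occurring below is again in the Lipschitz algebra. From the matrix unit relations of Lemma~\ref{l:matrix} and the inclusion $x \in Y_k = S_q^2 \cd \chi_{\{q^{2k}\}}(A)$ one reads off (using continuity of multiplication)
\[
f_{k,n} \cd x = \la_n \cd \chi_{\{q^{2k}\}}(A) , \quad x \cd \chi_{\{q^{2k}\}}(A) = x , \quad f_{k+1,n} \cd x = \la_n \cd f_{k+1,k} ,
\]
and $f_{k+1,k} \cd \chi_{\{q^{2(k+1)}\}}(A) = 0$. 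Now apply the Leibniz rule for $\pa_1$ to the threefold products $f_{k,n} \cd x \cd \chi_{\{q^{2k}\}}(A)$ and $f_{k+1,n} \cd x \cd \chi_{\{q^{2(k+1)}\}}(A)$ and solve for the middle factor, using the identities just recorded. This gives
\[
f_{k,n} \cd \pa_1(x) \cd \chi_{\{q^{2k}\}}(A) = \la_n \cd \big( 1 - \chi_{\{q^{2k}\}}(A) \big) \cd \pa_1\big( \chi_{\{q^{2k}\}}(A) \big) - \pa_1(f_{k,n}) \cd x
\]
together with $f_{k+1,n} \cd \pa_1(x) \cd \chi_{\{q^{2(k+1)}\}}(A) = - \la_n \cd f_{k+1,k} \cd \pa_1\big( \chi_{\{q^{2(k+1)}\}}(A) \big)$.

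\medskip

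It then remains to evaluate these right hand sides. The derivatives $\pa_1\big( \chi_{\{q^{2k}\}}(A) \big)$ and $\pa_1\big( \chi_{\{q^{2(k+1)}\}}(A) \big)$ are supplied by Lemma~\ref{l:derind}. For $\pa_1(f_{k,n}) \cd x$ one uses that left multiplication by the fixed bounded operator $\pa_1(f_{k,n})$ is norm-continuous, so $\pa_1(f_{k,n}) \cd x = \lim_N \sum_{m=0}^N \la_m \cd \pa_1(f_{k,n}) \cd f_{m,k}$; by Lemma~\ref{l:derbasII} and Lemma~\ref{l:matrix} only the terms $m = n$ and $m = n+1$ contribute, so $\pa_1(f_{k,n}) \cd x = \la_n \cd \pa_1(f_{k,n}) \cd f_{n,k} + \la_{n+1} \cd \pa_1(f_{k,n}) \cd f_{n+1,k}$. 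Substituting everything, commuting $(b^*)^2$ past functions of $A$ via $b^* A = A b^*$, rewriting the matrix units in terms of $B$, $B^*$ and $\chi_{\{q^{2j}\}}(A)$, and simplifying with $B B^* = q^{-2} A(1-A)$ and $\chi_{\{q^{2i}\}}(A) \chi_{\{q^{2j}\}}(A) = 0$ for $i \neq j$, the two $\la_n$-contributions in the first identity cancel and, after multiplication by $1 - q^2$, one is left with precisely $- q^{-2k-n}\sqrt{1 - q^{2(n+1)}} \cd \la_{n+1} \cd \chi_{\{q^{2k}\}}(A)(b^*)^2$; the second identity falls out directly in the same way. The single delicate point is the compression step of the middle paragraph, which is exactly what lets us bypass the possible failure of $\pa_1$ to commute with the infinite series for $x$; everything after that is routine bookkeeping of powers of $q$.
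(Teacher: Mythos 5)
Your proposal is correct and follows essentially the same route as the paper: both arguments compress $x$ by a matrix unit (collapsing the series $f_{k,n}\cd x = \la_n\cd\chi_{\{q^{2k}\}}(A)$, resp.\ $f_{k+1,n}\cd x = \la_n\cd f_{k+1,k}$) \emph{before} differentiating, then apply the Leibniz rule and solve for the compressed derivative, evaluating the remaining terms via Lemmas~\ref{l:derind} and~\ref{l:derbasII}. The only difference is organizational --- the paper sandwiches $\pa_1(f_{k,n}\cd x)$ with $\chi_{\{q^{2k}\}}(A)$ on both sides so that the unwanted terms vanish outright, whereas in your version two nonzero $\la_n$-contributions survive and must be checked to cancel (which they do, via Equation~\eqref{eq:casezero}).
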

\begin{proof}
Let $n \in \nn \cup \{0\}$ be given. Using Lemma \ref{l:derbasII} and Lemma \ref{l:matrix} we have that
\[
\begin{split}
& (1-q^2) \cd f_{k,n} \cd \pa_1(x) \cd \chi_{\{q^{2k}\}}(A)  \\
& \Q = 
- (1 - q^2) \cd \chi_{\{q^{2k}\}}(A) \cd \pa_1( f_{k,n}) \cd x \\
& \QQ + (1 - q^2) \cd \chi_{\{q^{2k}\}}(A) \cd \pa_1( \la_n \cd \chi_{\{q^{2k}\}}(A)) \cd \chi_{\{q^{2k}\}}(A) \\
& \Q = 
- q^{-3n-2} \sqrt{1 - q^{2(n+1)}} \cd f_{k,n+1} \cd (b^*)^2 \cd x \\
& \Q = - q^{-2k -n} \sqrt{1 - q^{2(n+1)}} \cd  \la_{n+1} \cd \chi_{\{q^{2k}\}}(A) (b^*)^2,
\end{split}
\]
where we have also used that $(b^*)^2 f_{m,k} = q^{2(m - k)} f_{m,k} (b^*)^2$ for all $m \in \nn \cup \{0\}$. This proves the first of the two identities.

The second of the two identities follows from the computation:
\[
\begin{split}
& (1 - q^2) \cd f_{k+1,n} \cd \pa_1(x) \cd \chi_{\{q^{2(k+1)}\}}(A)  \\
& \Q = 
(1 - q^2) \cd  \pa_1( \la_n \cd f_{k+1,k}) \cd \chi_{\{q^{2(k+1)}\}}(A) \\
& \Q = q^{-3k-2}\sqrt{1 - q^{2(k+1)}} \cd \la_n \cd \chi_{\{q^{2(k+1)}\}} (b^*)^2 ,
\end{split}
\]
where we have used Lemma \ref{l:derbasII} and Lemma \ref{l:matrix} one more time.
\end{proof}

We now present our computation of the derivatives of Lipschitz elements in the fiber $Y_k$. 
%

\begin{lemma}\label{p:vertder}
Let $k \in \nn \cup \{0\}$. The restriction of the derivation $\pa_1 : \T{Lip}_{D_q}(S_q^2) \to \B L(H_+, H_-)$ to the subspace $\T{Lip}_{D_q}(S_q^2) \cap Y_k \subseteq \T{Lip}_{D_q}(S_q^2)$ takes values in the subspace $(Y_k + Y_{k + 1}) \cd (b^*)^2 \subseteq S_q^2 \cd (b^*)^2$. Moreover, for $x = \sum_{n = 0}^\infty \la_n \cd f_{n,k} \in \T{Lip}_{D_q}(S_q^2) \cap Y_k$, we have an explicit formula for the derivative:
\[
\begin{split}
(1 - q^2) \cd \pa_1(x) & =
\sum_{n = 0}^\infty q^{-3k-2} \sqrt{1 - q^{2(k+1)}} \cd \la_n \cd f_{n,k+1} (b^*)^2 \\
& \Q - \sum_{n = 0}^\infty q^{-2k - n + 1}\sqrt{1 - q^{2n}} \cd \la_n \cd f_{n-1,k} (b^*)^2 .
\end{split}
\]
\end{lemma}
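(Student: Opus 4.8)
The plan is to reduce the statement about a general Lipschitz element $x = \sum_n \lambda_n f_{n,k}$ to the information already extracted in Lemma \ref{l:vertder}, using the Hilbert space structure on the fibers provided by Proposition \ref{p:fibhilb}. First I would observe that since $\chi_{\{q^{2k}\}}(A)$ and $\chi_{\{q^{2(k+1)}\}}(A)$ are orthogonal projections, for any element $y \in \mathcal{B}L(H_+,H_-)$ it makes sense to test $y$ against the systems $\{f_{k,n}\}_n$ and $\{f_{k+1,n}\}_n$ on the right after multiplying on the right by the appropriate spectral projection; the point of Lemma \ref{l:vertder} is precisely that $f_{k,n} \cdot \pa_1(x) \cdot \chi_{\{q^{2k}\}}(A)$ and $f_{k+1,n} \cdot \pa_1(x) \cdot \chi_{\{q^{2(k+1)}\}}(A)$ are completely determined, and are scalar multiples of $\chi_{\{q^{2k}\}}(A)(b^*)^2$ and $\chi_{\{q^{2(k+1)}\}}(A)(b^*)^2$ respectively.

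The main step is then to show that $\pa_1(x)$, which a priori only lies in $\mathcal{B}L(H_+,H_-)$, actually lies in $(Y_k + Y_{k+1})\cdot(b^*)^2$ and is given by the claimed series. The strategy is: since $x$ is supported on the fiber over $q^{2k}$ (i.e.\ $x = x\cdot\chi_{\{q^{2k}\}}(A)$), the Leibniz rule applied to $x = x \cdot \chi_{\{q^{2k}\}}(A)$ together with Lemma \ref{l:derind} forces $\pa_1(x)$ to be supported — on the right — on the union of the fibers over $q^{2k}$ and $q^{2(k+1)}$; concretely $\pa_1(x) = \pa_1(x)\big(\chi_{\{q^{2k}\}}(A) + \chi_{\{q^{2(k+1)}\}}(A)\big)$. (Here I would spell out that $\pa_1(x\cdot\chi_{\{q^{2k}\}}(A)) = \pa_1(x)\pi(\chi_{\{q^{2k}\}}(A)) + \pi(x)\cdot(1-q^2)^{-1}[\cdots]$ and that both terms are annihilated on the right by $1 - \chi_{\{q^{2k}\}}(A) - \chi_{\{q^{2(k+1)}\}}(A)$, using Lemma \ref{l:dec-comm} to identify the second term.) Having localized $\pa_1(x)$ this way, one expands $\pa_1(x)\chi_{\{q^{2k}\}}(A)$ and $\pa_1(x)\chi_{\{q^{2(k+1)}\}}(A)$ in the respective orthonormal bases of the Hilbert spaces $Y_k\cdot(b^*)^2$ and $Y_{k+1}\cdot(b^*)^2$ (Proposition \ref{p:fibhilb}): the coefficient against $f_{n,k}(b^*)^2 q^{-2k}$ is recovered by pairing with $f_{k,n}$ on the left — which is exactly what Lemma \ref{l:vertder} computes — and similarly for the other fiber. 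Reading off the coefficients from Lemma \ref{l:vertder} and reassembling the two $\ell^2$-convergent series yields the displayed formula, and in particular shows $\pa_1(x) \in (Y_k + Y_{k+1})(b^*)^2$.

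I would expect the main obstacle to be the localization step: justifying that $\pa_1(x)$ is right-supported on just the two spectral projections $\chi_{\{q^{2k}\}}(A)$ and $\chi_{\{q^{2(k+1)}\}}(A)$, and that the expansion of $\pa_1(x)$ restricted to each fiber converges in the Hilbert space $Y_j \cdot (b^*)^2$ (as opposed to merely weakly, or only as an operator). The cleanest way around this is probably to argue that $\pa_1(x)\chi_{\{q^{2k}\}}(A)$ and $\pa_1(x)\chi_{\{q^{2(k+1)}\}}(A)$ are individually elements of the Hilbert spaces $Y_k(b^*)^2$ and $Y_{k+1}(b^*)^2$ — e.g.\ by approximating $x$ by the finite partial sums $x_N = \sum_{n=0}^N \lambda_n f_{n,k}$, for which $\pa_1(x_N)$ is computed directly from Lemma \ref{l:derbasII} and manifestly lies in $(Y_k + Y_{k+1})(b^*)^2$, and then using closedness of $d_q$ together with the fact that $\|\pa_1(x) - \pa_1(x_N)\|_\infty$ controls the Hilbert space norm of the difference after multiplying by the finite-rank-in-the-fiber projections. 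Once each fiber-component of $\pa_1(x)$ is known to be a genuine Hilbert space vector, Lemma \ref{l:vertder} identifies all of its Fourier coefficients and the formula follows; the $\ell^2$-summability of the coefficient sequences is then automatic, being bounded by $\|\pa_1(x)\|_\infty$ times constants, and in fact the final formula makes this decay visible through the factors $q^{-3k-2}\sqrt{1-q^{2(k+1)}}\,\lambda_n$ and $q^{-2k-n+1}\sqrt{1-q^{2n}}\,\lambda_n$.
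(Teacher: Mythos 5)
Your proposal is correct and, in its final form, is essentially the paper's proof: pair $\pa_1(x)$ against the matrix units as in Lemma \ref{l:vertder}, compress with the finite-rank projections $\sum_{n=0}^N f_{n,k}f_{n,k}^*$ to get a uniform $\ell^2$-bound on the coefficients in terms of $\|\pa_1(x)\|_\infty$, so that the partial sums $\sum_{n=0}^N \pa_1(\la_n f_{n,k})$ (computed from Lemma \ref{l:derbasII}) converge in operator norm inside $(Y_k+Y_{k+1})\cd (b^*)^2$, and then identify the limit with $\pa_1(x)$ by closedness of $d_q$. The preliminary ``localization'' step you worry about (Leibniz rule on $x = x\cd\chi_{\{q^{2k}\}}(A)$ together with Lemmas \ref{l:derind} and \ref{l:dec-comm}) is sound but unnecessary, since the closedness argument already places $\pa_1(x)$ in $(Y_k+Y_{k+1})\cd(b^*)^2$; just be sure to phrase the key estimate as ``$\|\pa_1(x)\|_\infty$ bounds the $\ell^2$-norm of the truncated coefficient sequence'' rather than invoking $\|\pa_1(x)-\pa_1(x_N)\|_\infty\to 0$, which is only available after the fact.
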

\begin{proof}
For each $N \in \nn \cup \{0\}$, it follows by Lemma \ref{l:vertder} that
\[
\sum_{n = 0}^N f_{n,k} f_{n,k}^* \cd (1 - q^2) \cd \pa_1(x) \cd \chi_{\{q^{2k}\}}(A) 
= - \sum_{n = 0}^N q^{-2k-n}\sqrt{1 - q^{2(n+1)}} \cd \la_{n+1} \cd f_{n,k} (b^*)^2 .
\]
Since both $\sum_{n = 0}^N f_{n,k} f_{n,k}^*$ and $\chi_{\{q^{2k}\}}(A) \in S_q^2$ are orthogonal projections and hence have operator norm bounded by one, we obtain that
\[
\sum_{n = 0}^N | q^{-n}\sqrt{ 1 - q^{2(n+1)}} \cd \la_{n + 1} |^2 \leq  (1 - q^2)^2 \cd \| \pa_1(x) \|_\infty^2  ,
\]
for all $N \in \nn \cup \{0\}$. But this shows that 
\[
\begin{split}
& -\sum_{n = 0}^\infty q^{-2k-n + 1}\sqrt{1 - q^{2n}} \cd \la_n \cd f_{n-1,k} (b^*)^2 \\
& \Q = 
\sum_{n = 0}^\infty (1 - q^2) \cd \pa_1( \la_n \cd f_{n,k} ) \cd \chi_{\{q^{2k}\}}(A)
\in Y_k \cd (b^*)^2  .
\end{split}
\]
A similar argument proves that
\[
\begin{split}
& \sum_{n = 0}^\infty q^{-3k-2} \sqrt{1 - q^{2(k+1)}} \cd \la_n \cd f_{n,k+1} (b^*)^2 \\
& \Q = 
\sum_{n = 0}^\infty (1 - q^2) \cd \pa_1(\la_n \cd f_{n,k}) \cd \chi_{\{q^{2(k+1)}\}}(A) 
\in Y_{k+1} \cd (b^*)^2 .
\end{split}
\]
Using that $\pa_1 : \T{Lip}_{D_q}(S_q^2) \to \B L(H_+,H_-)$ is closable we may conclude that 
\[
\begin{split}
(1 - q^2) \cd \pa_1(x) 
& = \lim_{N \to \infty} \sum_{n = 0}^N (1 - q^2) \cd \pa_1( \la_n \cd f_{n,k})  \\
& = \sum_{n = 0}^\infty q^{-3k-2} \sqrt{1 - q^{2(k+1)}} \cd \la_n \cd f_{n,k+1} (b^*)^2 \\
& \Q - \sum_{n = 0}^\infty q^{-2k - n + 1}\sqrt{1 - q^{2n}} \cd \la_n \cd f_{n-1,k} (b^*)^2 \\
& \in (Y_k + Y_{k+1}) \cd (b^*)^2 .
\end{split}
\]
This proves the lemma.
\end{proof}

The first consequence of our computation of derivatives is a norm-bound on the elements in the Lip-ball of $\T{Lip}_{D_q}(S_q^2) \cap Y_k$:

\begin{lemma}\label{l:norbou}
Let $k \in \nn \cup \{0\}$. The subspace
\[
\big\{ x \in \T{Lip}_{D_q}(S_q^2) \cap Y_k \mid \| \pa_1(x) \|_\infty \leq 1 \big\} \subseteq Y_k
\]
is norm-bounded by $q^k$. 
%
\end{lemma}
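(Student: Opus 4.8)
The plan is to use the explicit formula for $\pa_1$ on elements of the fiber $Y_k$ provided by Lemma \ref{p:vertder}, combined with the orthogonality structure on $Y_k$ and $Y_{k+1} \cd (b^*)^2$ coming from Proposition \ref{p:fibhilb}. Write $x = \sum_{n=0}^\infty \la_n \cd f_{n,k}$, so that $\|x\|^2 = \sum_n |\la_n|^2$ under the Hilbert space identification $Y_k \cong \ell^2(\nn \cup \{0\})$. The key observation is that the first sum in the formula of Lemma \ref{p:vertder}, namely $\sum_n q^{-3k-2}\sqrt{1-q^{2(k+1)}}\cd \la_n \cd f_{n,k+1}(b^*)^2$, lies in $Y_{k+1}\cd(b^*)^2$, which is orthogonal to the image of the second sum (which lies in $Y_k \cd (b^*)^2$). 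Therefore, if we project $\pa_1(x)$ onto the $Y_{k+1}\cd (b^*)^2$ component — concretely, multiply on the left by $\chi_{\{q^{2(k+1)}\}}(A)$ (a norm-one projection), noting $Y_{k+1} = \chi_{\{q^{2(k+1)}\}}(A)\cd SU_q(2)$ appropriately — we obtain that $\|\pa_1(x)\|_\infty$ dominates the norm of that first sum.

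First I would compute, using that $\{f_{n,k+1}(b^*)^2 q^{-2(k+1)}\}_n$ is an orthonormal basis of $Y_{k+1}\cd(b^*)^2$ by Proposition \ref{p:fibhilb}, that
\[
\Big\| \sum_{n=0}^\infty q^{-3k-2}\sqrt{1-q^{2(k+1)}}\cd \la_n \cd f_{n,k+1}(b^*)^2 \Big\|^2
= q^{-6k-4}(1-q^{2(k+1)}) \cd q^{4(k+1)} \cd \sum_{n=0}^\infty |\la_n|^2,
\]
since $f_{n,k+1}(b^*)^2 = q^{2(k+1)}\cd\big(f_{n,k+1}(b^*)^2 q^{-2(k+1)}\big)$. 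This simplifies to $q^{-2k}(1-q^{2(k+1)}) \cd \|x\|^2 \leq q^{-2k}\cd\|x\|^2$. Since this first sum equals $(1-q^2)\cd\pa_1(x)\cd\chi_{\{q^{2(k+1)}\}}(A)$ by the proof of Lemma \ref{p:vertder}, and $\chi_{\{q^{2(k+1)}\}}(A)$ has operator norm one, we get $q^{-2k}(1-q^{2(k+1)})\cd\|x\|^2 \leq (1-q^2)^2 \cd \|\pa_1(x)\|_\infty^2$. Hmm — this gives a bound with an extra factor of $(1-q^2)$ and $\sqrt{1-q^{2(k+1)}}$; I would need to track constants more carefully, but morally the inequality $\|x\| \leq q^k \cd (\text{const depending on }q,k)\cd\|\pa_1(x)\|_\infty$ should come out, and one expects the stated clean bound $q^k$ to follow after correctly accounting for the $(1-q^2)$ factors that appear symmetrically. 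Alternatively, and perhaps more robustly, I would instead use the \emph{second} sum together with the already-derived inequality in the proof of Lemma \ref{p:vertder}, namely $\sum_{n=0}^N |q^{-n}\sqrt{1-q^{2(n+1)}}\la_{n+1}|^2 \leq (1-q^2)^2\|\pa_1(x)\|_\infty^2$, which controls $\sum_{n\geq 1}|\la_n|^2$ but misses $|\la_0|^2$; combining with the $Y_{k+1}$-projection bound (which does see $\la_0$) then closes the gap.

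The main obstacle I anticipate is precisely this bookkeeping of constants: getting the \emph{sharp} bound $q^k$ rather than some cruder multiple of $q^k$, and in particular making sure the $n=0$ coefficient $\la_0$ is handled — the second sum in Lemma \ref{p:vertder} annihilates $\la_0$ (the $f_{-1,k}=0$ term), so the norm of $\la_0 f_{0,k}$ must be recovered entirely from the $Y_{k+1}$-component. Once one observes that the $Y_{k+1}$-projection controls \emph{all} the coefficients $\la_n$ uniformly with the single constant $q^{-3k-2}\sqrt{1-q^{2(k+1)}}\cd q^{2(k+1)} = q^{-k}\sqrt{1-q^{2(k+1)}}$, one reads off $\|x\| \leq \frac{1-q^2}{q^{-k}\sqrt{1-q^{2(k+1)}}}\cd\|\pa_1(x)\|_\infty$, and since $\frac{1-q^2}{\sqrt{1-q^{2(k+1)}}} \leq \sqrt{1-q^2} \leq 1$ for $k\geq 0$, this yields $\|x\|\leq q^k\cd\|\pa_1(x)\|_\infty \leq q^k$. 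So the proof is essentially one clean projection estimate plus an elementary inequality $(1-q^2)^2 \leq 1-q^{2(k+1)}$, which holds since $q^{2(k+1)} \leq q^2$ gives $1-q^{2(k+1)} \geq 1-q^2 \geq (1-q^2)^2$.
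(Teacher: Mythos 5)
Your proposal is correct and follows essentially the same route as the paper: multiply $\pa_1(x)$ on the right by the spectral projection $\chi_{\{q^{2(k+1)}\}}(A)$ to isolate the $Y_{k+1}\cd (b^*)^2$ component, compute its norm via the orthonormal basis $\{f_{n,k+1}(b^*)^2 q^{-2(k+1)}\}_n$ to get $q^{-2k}(1-q^{2(k+1)})\|x\|^2 \leq (1-q^2)^2\|\pa_1(x)\|_\infty^2$, and finish with $(1-q^2)^2 \leq 1-q^{2(k+1)}$. (Only a cosmetic slip: the projection acts on the right, as your displayed formula correctly has it, not on the left as stated in your prose; your worry about the constants and about $\la_0$ is unfounded since, as you eventually observe, the $Y_{k+1}$-component sees all coefficients uniformly.)
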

\begin{proof}
Let $x = \sum_{n = 0}^\infty \la_n \cd f_{n,k} \in \T{Lip}_{D_q}(S_q^2) \cap Y_k$ and suppose that $\| \pa_1(x)\|_\infty \leq 1$. We have from Lemma \ref{p:vertder} that
\[ 
(1 - q^2) \cd \pa_1(x) \cd \chi_{\{q^{2(k+1)}\}}(A) = \sum_{n = 0}^\infty q^{-3k-2} \sqrt{1 - q^{2(k+1)}} \cd \la_n \cd f_{n,k+1} (b^*)^2 
\]
But this implies that
\[
\begin{split}
q^{-2k} (1 - q^{2(k+1)}) \cd \| x \|^2 & = q^{-2k} (1 - q^{2(k+1)}) \sum_{n = 0}^\infty |\la_n|^2 \\ 
& = \| (1 - q^2) \cd \pa_1(x) \cd \chi_{\{q^{2(k+1)}\}}(A) \|_\infty^2 \leq (1 - q^2)^2 ,
\end{split}
\]
and hence that $\| x \| \leq q^k (1 - q^2) (1 - q^{2(k+1)})^{-1/2} \leq q^k$. This proves the lemma.
\end{proof}

The second consequence of our computation of derivatives is our first main result:

\begin{proof}[Proof of Theorem \ref{t:0-totbou}]
Let $x = \sum_{n= 0}^\infty \lambda_n \cd f_{n,k} \in L_k$ be given. We are going to show that
\[
|\la_n| \leq q^{n - 1}
\]
for all $n \in \nn \cup \{0\}$ (independently of $x$). Since the sequence $\{q^{n-1}\}_{n = 0}^\infty$ is $2$-summable this implies that $L_k  \subseteq Y_k$ is totally bounded by \cite[Theorem 5.5.6]{Shirali06}.

By Lemma \ref{p:vertder} we have that
\begin{equation*}
(1 - q^2) \cd \pa_1(x) \cd \chi_{\{q^{2k}\}}(A)= - \sum_{n= 0}^\infty q^{-2k-n+1} \sqrt{1 - q^{2n}} \cd \la_n \cd f_{n-1,k} (b^*)^2 ,
\end{equation*}
and hence that
\[
\sum_{n = 1}^\infty  q^{-2n + 2} (1 -q^{2n}) \cd |\la_n|^2  = (1 - q^2)^2 \| \pa_1(x) \cd \chi_{\{q^{2k}\}}(A) \|_\infty^2 \leq (1 - q^2)^2 .
\]
We conclude that 
\[
| \la_n| \leq q^{n-1} \cd \frac{1 - q^2}{\sqrt{1 - q^{2n}}}\leq q^{n-1}  .
\]
for all $n \in \nn$. Also, the fact that $|\la_0| \leq q^{-1}$ follows from the estimate in Lemma \ref{l:norbou}. This proves the theorem.
%
\end{proof}

We end this section by presenting an improvement of the computation of derivatives in Lemma \ref{p:vertder}:

\begin{prop}\label{p:totaldern}
Let $x = \mu \cd 1_{S_q^2} + \sum_{l = 0}^\infty \sum_{n = 0}^\infty \la_{n,l} \cd f_{n,l} \in \T{Lip}_{D_q}(S_q^2)$ be given. For each $k \in \nn \cup \{0\}$, it holds that
\[
\begin{split}
& (1-q^2) \cd \pa_1(x) \cd \chi_{\{q^{2k}\}}(A) \\
& \Q = \sum_{n = 0}^\infty \Big( \la_{n,k-1} \cd q^{-3k+1} \sqrt{1 - q^{2k}} - \la_{n+1,k} \cd q^{-2k -n} \sqrt{1 - q^{2(n+1)}} \Big) \cd f_{n,k} (b^*)^2 ,
\end{split}
\]
where by convention $\la_{n,-1} := 0$ for all $n \in \nn \cup \{0\}$. In particular, it holds that $\pa_1(x) \cd \chi_{\{q^{2k}\}}(A) \in Y_k$ for all $k \in \nn \cup \{0\}$. 
\end{prop}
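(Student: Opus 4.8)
The plan is to reduce the computation to the fibrewise formula of Lemma \ref{p:vertder} by cutting $x$ down with the spectral projection $p_k := \chi_{\{q^{2k}\}}(A) \in S_q^2$. First I would record three elementary facts. Since $\pa_1$ is a derivation and $1_{S_q^2}$ is the unit, $\pa_1(\mu \cd 1_{S_q^2}) = 0$, so the scalar term plays no role in $\pa_1(x)$. From Equation \eqref{eq:onb} together with Equation \eqref{eq:normal} one reads off $p_k = f_{k,k}$ and, more usefully, $B^* \cd \chi_{\{q^{2(k+1)}\}}(A) = \sqrt{C_{k,k+1}} \cd f_{k,k+1} = q^k \sqrt{1 - q^{2(k+1)}} \cd f_{k,k+1}$ and $B^* \cd \chi_{\{q^{2k}\}}(A) = \sqrt{C_{k-1,k}} \cd f_{k-1,k} = q^{k-1} \sqrt{1 - q^{2k}} \cd f_{k-1,k}$ (the latter vanishing when $k = 0$). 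Finally, Lemma \ref{l:derind} gives $p_k \in \T{Lip}_{D_q}(S_q^2)$, and since the Lipschitz algebra is a $*$-subalgebra it follows that $x \cd p_k \in \T{Lip}_{D_q}(S_q^2) \cap Y_k$.

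Next I would identify $x \cd p_k$ inside the fibre. Using $f_{n,l} \cd f_{k,k} = \delta_{l,k} \cd f_{n,k}$ (Lemma \ref{l:matrix}) and norm-continuity of right multiplication by $p_k$, the given expansion of $x$ yields the norm-convergent series $x \cd p_k = \sum_{n = 0}^\infty (\la_{n,k} + \mu \cd \delta_{n,k}) \cd f_{n,k}$ in $Y_k$. Applying Lemma \ref{p:vertder} to $x \cd p_k$ then produces an explicit formula for $(1 - q^2) \cd \pa_1(x \cd p_k)$ as the sum of a term in $Y_{k+1} \cd (b^*)^2$ and a term in $Y_k \cd (b^*)^2$. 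Independently, the Leibniz rule for $\pa_1 : \T{Lip}_{D_q}(S_q^2) \to \B L(H_+,H_-)$ gives (in the sense of products used in Lemma \ref{l:vertder}) $\pa_1(x \cd p_k) = \pa_1(x) \cd p_k + x \cd \pa_1(p_k)$, so that $\pa_1(x) \cd \chi_{\{q^{2k}\}}(A) = \pa_1(x \cd p_k) - x \cd \pa_1(p_k)$. The correction term I would evaluate from Equation \eqref{eq:casezero} and the conversions above, obtaining
\[
(1 - q^2) \cd \pa_1(p_k) = q^{-3k-2} \sqrt{1 - q^{2(k+1)}} \cd f_{k,k+1} (b^*)^2 - q^{-3k+1} \sqrt{1 - q^{2k}} \cd f_{k-1,k} (b^*)^2 ,
\]
and then $x \cd f_{k,k+1} = \mu \cd f_{k,k+1} + \sum_{n} \la_{n,k} \cd f_{n,k+1} \in Y_{k+1}$ and $x \cd f_{k-1,k} = \mu \cd f_{k-1,k} + \sum_{n} \la_{n,k-1} \cd f_{n,k} \in Y_k$, again by Lemma \ref{l:matrix} and norm-continuity.

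Finally I would subtract and collect terms according to the fibre ($Y_k \cd (b^*)^2$ or $Y_{k+1} \cd (b^*)^2$) to which they belong. The $Y_{k+1} \cd (b^*)^2$-contributions cancel identically — which is the conceptual reason that $\pa_1(x) \cd \chi_{\{q^{2k}\}}(A)$ sits over the single fibre $k$ — and among the $Y_k \cd (b^*)^2$-contributions the terms carrying the scalar $\mu$ also cancel. A reindexing $m = n - 1$ in the surviving sum, legitimate because $f_{-1,k} = 0$, then produces exactly the asserted identity, the convention $\la_{n,-1} := 0$ absorbing the vanishing of the $(k-1)$-contributions at $k = 0$; the final membership statement is read off directly from the resulting formula. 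I expect the only genuine obstacle to be the bookkeeping in this last subtraction: one must keep the two ``directions'' strictly apart and appeal to the square-summability estimate from the proof of Lemma \ref{p:vertder} (together with boundedness of $\pa_1(x)$) to justify manipulating the infinite series, but no new idea is needed beyond the Leibniz rule and the fibrewise computations already established.
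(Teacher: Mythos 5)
Your proposal is correct and follows essentially the same route as the paper: write $\pa_1(x)\cd\chi_{\{q^{2k}\}}(A)$ via the Leibniz rule as $\pa_1(x\cd\chi_{\{q^{2k}\}}(A)) - x\cd\pa_1(\chi_{\{q^{2k}\}}(A))$ and evaluate both terms with Lemma \ref{p:vertder} and the formula for $\pa_1$ of the spectral projection. The only (cosmetic) difference is that the paper inserts an extra factor of $\chi_{\{q^{2k}\}}(A)$ on the right to kill the $Y_{k+1}\cd(b^*)^2$-components outright, whereas you cancel them against each other; your bookkeeping of the $\mu$-terms and the reindexing are exactly what makes the paper's displayed computation go through.
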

\begin{proof}
Using Lemma \ref{p:vertder} we compute that
\[
\begin{split}
& (1-q^2) \cd \pa_1(x) \cd \chi_{\{q^{2k}\}}(A) \\
& \Q = (1 - q^2) \cd \pa_1( x \cd \chi_{\{q^{2k}\}}(A)) \cd \chi_{\{q^{2k}\}}(A) - (1 - q^2) \cd x \cd \pa_1(\chi_{\{q^{2k}\}}(A)) \cd \chi_{\{q^{2k}\}}(A) \\
& \Q = -\sum_{n = 0}^\infty q^{-2k - n+1} \sqrt{1 - q^{2n}} \cd \la_{n,k} f_{n-1,k}(b^*)^2
+ x \cd q^{-3k + 1} \sqrt{1 - q^{2k}} \cd f_{k-1,k} (b^*)^2 \\
& \Q = -\sum_{n = 0}^\infty q^{-2k - n+1} \sqrt{1 - q^{2n}} \cd \la_{n,k} f_{n-1,k}(b^*)^2 \\
& \Q \Q + \sum_{n = 0}^\infty q^{-3k + 1} \sqrt{1 - q^{2k}} \cd \la_{n,k-1} f_{n,k} (b^*)^2.
\end{split}
\]
This proves the proposition.
\end{proof}

\section{The quantum integral}
In this section we introduce our main device, which will enjoy similar properties to the classical Volterra operator. For this reason we call this device the quantum integral. The quantum integral will be the sum of a ``vertical'' and a ``horizontal'' component, which we refer to as the vertical and the horizontal quantum integral. We have chosen the terminology ``vertical'' and ``horizontal'' in order to separate these two quite different operations but do not attach any deeper geometric meaning to this factorization of the total quantum integral.

The domain of the quantum integral is a closed subspace of operators
\[
X \subseteq \B L(H_+,H_-) 
\]
defined by the requirement
\[
X := \big\{ \xi \in \B L(H_+,H_-) \mid
\xi \cd \chi_{\{q^{2k}\}}(A) \in  Y_k \cd (b^*)^2 \, \, \T{for all } \, \,  k \in \nn \cup \{0\} \big\}  .
\]

The quantum integral will be a bounded operator
\[
\int : X \to S_q^2
\]
and it will satisfy the following properties:


\begin{thm}\label{t:fundam}
We have the identity
\[
\int \pa_1(x) = x - \psi_\infty(x) \cd 1_{S_q^2} \Q \mbox{for all } x \in \T{Lip}_{D_q}(S_q^2) .
\]
\end{thm}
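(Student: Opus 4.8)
The plan is to define the quantum integral $\int$ explicitly on $X$ in terms of its matrix-unit coordinates, verify that it is a bounded operator into $S_q^2$, and then check the ``fundamental theorem'' identity by a direct computation using the formula for $\pa_1(x) \cd \chi_{\{q^{2k}\}}(A)$ from Proposition \ref{p:totaldern}. Concretely, given $\xi \in X$, for each $k \in \nn \cup \{0\}$ we may write $\xi \cd \chi_{\{q^{2k}\}}(A) = \sum_{n = 0}^\infty \mu_{n,k} \cd f_{n,k} (b^*)^2$ for uniquely determined scalars $\mu_{n,k}$ (using that $\{ f_{n,k}(b^*)^2 q^{-2k} \}_n$ is an orthonormal basis of $Y_k \cd (b^*)^2$, Proposition \ref{p:fibhilb}). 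The formula in Proposition \ref{p:totaldern} tells us exactly what $\mu_{n,k}$ should be when $\xi = \pa_1(x)$, namely a combination of $\la_{n,k-1}$ and $\la_{n+1,k}$; the job of $\int$ is to invert this triangular recursion fiber-by-fiber and reassemble a bounded element of $S_q^2$. This is precisely the ``vertical/horizontal'' splitting mentioned before the statement: the $\la_{n+1,k}$-term is inverted within the fiber $Y_k$ (the vertical integral, a discrete Volterra-type operator summing $\mu_{n,k}$ over $n$), and the $\la_{n,k-1}$-term is inverted across fibers (the horizontal integral, solving the recursion in $k$).

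\textbf{Key steps.} First I would fix the definition of $\int$ on $X$: decompose $\xi \in X$ into its fiberwise coordinates $\mu_{n,k}$ as above, define the vertical part $\int_v \xi \in \prod_k Y_k$ by a weighted summation of the $\mu_{n,k}$ over $n$ within each fiber $k$, define the horizontal part $\int_h \xi$ by the corresponding telescoping sum over the fiber index, and set $\int = \int_v + \int_h$. Second, I would prove $\int$ is bounded from $X$ (with the operator norm of $\B L(H_+,H_-)$) into $S_q^2$ (with the $C^*$-norm): here one uses that $\| \xi \cd \chi_{\{q^{2k}\}}(A)\|_\infty = \big( \sum_n |\mu_{n,k}|^2 q^{4k}\big)^{1/2}$ by Proposition \ref{p:fibhilb}, together with the exponential decay of the coefficients $q^{-3k+1}\sqrt{1 - q^{2k}}$ etc.\ appearing in the recursion, to get $\ell^1$-type control over the fibers and hence convergence in $S_q^2$; one will also want an estimate showing $\int x$ lands in the norm-closure of $\bigoplus_k Y_k$. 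Third, I would verify that $\psi_\infty\big(\int \xi\big) = 0$ for all $\xi \in X$ — this should be automatic from the construction since $\int \xi \in \overline{\bigoplus_k Y_k}$, which is the complement of $\cc \cd 1_{S_q^2}$ under the state $\psi_\infty$. Fourth, and this is the crux, I would compute $\int \pa_1(x)$ for $x = \mu \cd 1_{S_q^2} + \sum_{l,n} \la_{n,l} f_{n,l} \in \T{Lip}_{D_q}(S_q^2)$: plug the coordinates from Proposition \ref{p:totaldern} into the definition of $\int$, and check that the vertical summation over $n$ of the $\la_{n+1,k}\sqrt{1 - q^{2(n+1)}}$-term telescopes to recover $\sum_n \la_{n,k} f_{n,k}$, while the horizontal summation over the fiber index of the $\la_{n,k-1}\sqrt{1 - q^{2k}}$-term also telescopes and — crucially — the cross-terms cancel, leaving exactly $\sum_{l,n}\la_{n,l} f_{n,l} = x - \mu \cd 1_{S_q^2} = x - \psi_\infty(x) \cd 1_{S_q^2}$.

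\textbf{Main obstacle.} The hard part will be the boundedness of $\int$ into $S_q^2$, i.e.\ showing that the reassembled series genuinely converges in the $C^*$-norm rather than merely strongly or fiberwise. The coefficients $q^{-3k}$ grow, so one cannot simply bound fiber by fiber in isolation; the point must be that for $\xi \in X$ the fiber norms $\| \xi \cd \chi_{\{q^{2k}\}}(A) \|_\infty$ themselves decay fast enough (because $\xi$ is a fixed bounded operator and the $\chi_{\{q^{2k}\}}(A)$ are mutually orthogonal projections, forcing $\| \xi \cd \chi_{\{q^{2k}\}}(A)\|_\infty \to 0$), and one has to combine this with the growth of the structure constants carefully — likely by organizing $\int \xi$ as a norm-convergent sum of pieces each supported on finitely many fibers and exploiting that a sum of elements living in mutually "staggered" fibers has norm controlled by a supremum rather than an $\ell^1$-sum. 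A secondary subtlety is the interchange of the (closed, only closable a priori) derivation $\pa_1$ with the infinite sums when passing from the coordinatewise identity of Proposition \ref{p:totaldern} to the operator identity $\int \pa_1(x) = x - \psi_\infty(x)\cd 1_{S_q^2}$; this should be handled by truncating $x$ to finitely many matrix units, applying the identity there, and taking limits using boundedness of $\int$ and closability of $\pa_1$.
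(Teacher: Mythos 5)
Your overall architecture is the paper's: Proposition \ref{p:totaldern} gives the two--term triangular recursion for the coordinates of $\pa_1(x)\cd\chi_{\{q^{2k}\}}(A)$, the integral is defined as a sum of two bounded pieces each inverting one term of that recursion, and the identity follows by the telescoping you describe (this is exactly Lemmas \ref{l:quavertf} and \ref{l:quahorif}, from which the paper's proof of Theorem \ref{t:fundam} is a one-line consequence). Two points of your sketch, however, do not survive contact with the details. First, and more seriously, the mechanism you propose for boundedness is wrong: it is \emph{not} true that $\|\xi\cd\chi_{\{q^{2k}\}}(A)\|_\infty\to 0$ for a general $\xi\in X$. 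Mutual orthogonality of the projections forces norm decay only for compact operators, and $\xi=\pa_1(x)$ for a Lipschitz $x$ is merely bounded; an operator mapping each block isometrically has all fiber norms equal to $\|\xi\|_\infty$. No decay of the fiber norms is needed. The correct observation is that the renormalized slice $\xi\cd\chi_{\{q^{2l}\}}(A)\cd b^2\cd q^{-2l}$ is exactly the element of $Y_l$ with the same coordinates as $\xi\cd\chi_{\{q^{2l}\}}(A)$ (since $(b^*)^2b^2$ acts as $q^{4l}$ on $Y_l$), hence has norm $\leq\|\xi\|_\infty$ uniformly in $l$; the apparent growth $q^{-2l}$ is entirely absorbed by this isometric identification, and the outer sum over the target fiber index $m$ carries the summable weights $q^m(m+1)\cd(1-q^2)(1-q^{2(m+1)})^{-1/2}$. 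So boundedness is a plain $\ell^1$-estimate with summable weights, not the ``staggered fibers / supremum'' argument you anticipate.

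Second, a smaller inaccuracy: the ``vertical'' piece cannot act purely within the fiber $Y_k$. The recursion couples $\mu_{n,k}$ to both $\la_{n+1,k}$ and $\la_{n,k-1}$, so solving for $\la_{n+1,k}$ reintroduces a coefficient from the fiber $k-1$; the inversion necessarily proceeds along a diagonal staircase through the fibers $l=0,\dots,m$ (terminating because $\la_{\cdot,-1}=0$), which is why the paper's $\int^V$ involves the cross-fiber shifts $(S^H\Ga)^{m-l}$. The true dichotomy between the two integrals is lower-triangular ($n>m$, solved from the left boundary $l=0$) versus upper-triangular ($n\leq m$, solved from decay as $l\to\infty$, using $((S^H)^*)^2f_{1,p}=0$), not within-fiber versus cross-fiber. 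With these two corrections your plan becomes the paper's proof.
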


\begin{prop}\label{p:continuity}
We have the estimate
\[
\| ( \int \xi )\cd \chi_{[0,q^{2k}]}(A)  \| \leq \| \xi \|_\infty \cd q^k \cd (k + 2) \cd (1 - q)^{-2}  ,
\]
for all $\xi \in X$ and all $k \in \nn \cup \{0\}$.
\end{prop}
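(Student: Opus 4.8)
The plan is to estimate $\|(\int \xi) \cdot \chi_{[0,q^{2k}]}(A)\|$ by exploiting the decomposition of the quantum integral into its vertical and horizontal components, which is how the authors intend to construct $\int$ in the first place. Since $\chi_{[0,q^{2k}]}(A) = \sum_{j \geq k} \chi_{\{q^{2j}\}}(A)$ (together with the spectral projection onto $0$, which contributes nothing since $Y_j \subseteq S_q^2$), the key is to understand how $\int \xi$ interacts with the fibers $Y_j$ for $j \geq k$. The structural input is that $\xi \cdot \chi_{\{q^{2j}\}}(A) \in Y_j \cdot (b^*)^2$ for each $j$, so $\|\xi \cdot \chi_{\{q^{2j}\}}(A)\|_\infty \leq \|\xi\|_\infty$, and then the norm-decay built into the fibers (the factor $q^j$ appearing in Lemma~\ref{l:norbou} and Proposition~\ref{p:fibhilb}, coming from $(b^*)^2$ acting as multiplication by $q^{2(m-j)}$) should give that the vertical integral of $\xi \cdot \chi_{\{q^{2j}\}}(A)$ has norm $\lesssim q^j \cdot \|\xi\|_\infty \cdot (1-q)^{-1}$ or similar.

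First I would write $\int = \int_v + \int_h$ for the vertical and horizontal parts and record the fiberwise action: the vertical integral $\int_v \xi$ should, roughly, invert the ``lowering'' part of $\pa_1$ (the $f_{n-1,k}$ terms in Lemma~\ref{p:vertder}) within each fiber $Y_k$, while the horizontal integral $\int_h \xi$ accounts for the ``raising'' part (the $f_{n,k+1}$ terms linking $Y_k$ to $Y_{k+1}$). For each fixed $j$, I would estimate $\|(\int_v \xi) \cdot \chi_{\{q^{2j}\}}(A)\|$ and $\|(\int_h \xi)\cdot \chi_{\{q^{2j}\}}(A)\|$ separately in terms of $\|\xi \cdot \chi_{\{q^{2j}\}}(A)\|_\infty$ and $\|\xi \cdot \chi_{\{q^{2(j-1)}\}}(A)\|_\infty$, each bounded by $\|\xi\|_\infty$. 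The geometric factors $q^{-3j}$, $q^{-2j}$ etc. appearing in the derivative formulas get cancelled (and then some) by the $q^{2(m-j)}$ and $q^{2j}$ factors from $(b^*)^2$ and the normalizing constants, leaving a clean power $q^j$ times a bounded coefficient involving $(1-q^{2j})^{-1/2} \leq (1-q^2)^{-1/2}$.

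Then I would sum over $j \geq k$: since $(\int \xi)\cdot \chi_{[0,q^{2k}]}(A)$ decomposes as a sum of pieces supported on $Y_j$, $j \geq k$, I would use orthogonality of the fibers (the $\chi_{\{q^{2j}\}}(A)$ are mutually orthogonal projections) together with the per-fiber bound $\lesssim q^j \|\xi\|_\infty (1-q)^{-1}$. Because the horizontal integral couples $Y_{j-1}$ to $Y_j$, each fiber receives contributions from at most two of the summands, and the bound becomes $\sum_{j \geq k} q^j \cdot C \cdot \|\xi\|_\infty (1-q)^{-2}$, but one must be careful: a direct geometric sum gives $q^k(1-q)^{-1}$, not $q^k(k+2)$. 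The factor $(k+2)$ strongly suggests that the estimate is \emph{not} obtained by naive summation of fiber norms but rather by a telescoping/triangle-inequality argument where $k+2$ operator-norm-$\leq 1$ projections or partial sums are involved (e.g.\ the horizontal integral over the fibers $Y_0, \ldots, Y_k$ each contributes a bounded term, giving $k+1$ summands, plus one vertical term). So I would organize the horizontal part as a sum of $k+1$ ``bands'' each of norm $\leq \|\xi\|_\infty q^j (1-q)^{-2}$ roughly, bound each by $\|\xi\|_\infty q^k (1-q)^{-2}$ using $q^j \leq q^k$ only where forced, and collect the count $(k+2)$.

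The main obstacle I expect is getting the bookkeeping exactly right to produce the stated constant $q^k (k+2)(1-q)^{-2}$ rather than a weaker bound: this requires knowing the precise definition of $\int$ (vertical plus horizontal), in particular how many fibers the horizontal component ``integrates across'' when restricted to $\chi_{[0,q^{2k}]}(A)$, and tracking that the $(1-q)^{-2}$ arises as one factor of $(1-q^2)^{-1} \leq (1-q)^{-1}$ from the derivative formulas times another $(1-q)^{-1}$ from a geometric series in the vertical direction. Since the precise construction of $\int$ appears only after this excerpt, in the actual proof I would invoke the explicit fiberwise formulas for $\int_v$ and $\int_h$ (defined so that Theorem~\ref{t:fundam} holds, using the derivative computation in Proposition~\ref{p:totaldern}), verify the single-fiber estimates by the Cauchy--Schwarz/orthonormal-basis argument of Proposition~\ref{p:fibhilb}, and then sum.
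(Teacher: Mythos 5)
Your plan is essentially the paper's proof: the quantum integral is defined as $\int = \int^V + \int^H$, and Proposition~\ref{p:continuity} is obtained by simply adding the two separate estimates $\| (\int^V \xi)\cdot\chi_{[0,q^{2k}]}(A)\| \leq q^k (k+1)(1-q)^{-2}\|\xi\|_\infty$ (Lemma~\ref{l:vertcontf}) and $\| (\int^H \xi)\cdot\chi_{[0,q^{2k}]}(A)\| \leq q^k (1-q)^{-2}\|\xi\|_\infty$ (Lemma~\ref{l:horicont}), which is exactly the ``split into vertical and horizontal, estimate each on the tail of the spectrum, and sum'' strategy you describe. Two details of your guessed bookkeeping are reversed relative to the actual construction: the factor $k+1$ comes from the \emph{vertical} integral, whose component in the fiber $Y_m$ is a sum of $m+1$ contractive terms indexed by the input fibers $l = 0,\dots,m$ (so one estimates $\sum_{m\geq k}(m+1)q^m \leq q^k(k+1)(1-q)^{-2}$), while the horizontal integral contributes only the single remaining unit because its inner sum over input fibers $l > m$ is itself geometric (the diagonal operator $\Delta$ there has norm at most $q$). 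Relatedly, the integral is not local in the fiber index: the $Y_m$-component of $\int\xi$ depends on $\xi\cdot\chi_{\{q^{2l}\}}(A)$ for \emph{every} $l$ (vertically for $l\leq m$, horizontally for $l>m$), not just on two adjacent fibers --- this Volterra-type nonlocality is precisely what produces the linear-in-$k$ factor that you correctly suspected could not come from a naive geometric sum over fibers.
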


The definition of the quantum integral and the proofs of these results will be given in Subsection \ref{ss:proofs}.


%

\subsection{The quantum vertical integral}
In this subsection we construct the vertical part of the quantum integral. We start by introducing some convenient isometries and a bounded diagonal operator.

For each $k \in \nn \cup \{0\}$, denote the bounded vertical and horizontal shift  operators given by
\begin{equation}\label{eq:isometries}
\begin{split}
& S^V : Y_k \to Y_k \Q S^V(f_{n,k}) := f_{n + 1,k} \Q \T{and} \\
& S^H : Y_k \to Y_{k + 1} \Q S^H(f_{n,k}) := f_{n+1,k+1} .
\end{split}
\end{equation}
Moreover, for each $k \in \nn \cup \{0\}$, let $\Ga : Y_k \to Y_k$ denote the bounded diagonal operator given by
\[
\begin{split}
\Ga( f_{n,k}) := 
\fork{ccc}{ q^{n-k-1} \left( \frac{1 - q^{2(k+1)}}{1 - q^{2n}} \right)^{1/2} f_{n,k} & \T{for} & n \geq k+1 \\
0 & \T{for} & 0 \leq  n \leq k}  .
\end{split}
\]
We remark that $\Ga$ has operator norm equal to one,  $\| \Ga \|_\infty = 1$.

\begin{dfn} We define the \emph{quantum vertical integral}
\[
\int^V : X  \to S_q^2
\]
by the formula
\[
\int^V \xi
:=  - \sum_{ m = 0 }^\infty q^m \frac{1 - q^2}{( 1 - q^{2(m+1)})^{1/2}} \cd \sum_{l = 0}^m \Ga (S^H \Ga)^{m - l} S^V \big(  \xi \cd \chi_{\{q^{2l}\}}(A)\cd b^2 \cd  q^{-2l} \big)  .
\]
\end{dfn}

\begin{lemma}
The quantum vertical integral is a well-defined bounded operator and the operator norm is bounded by $(1 - q^2)^{1/2}(1-q)^{-2}$.
\end{lemma}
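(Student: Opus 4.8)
The plan is to estimate the operator norm of $\int^V$ by bounding the norm of each summand in the defining double sum and then summing a geometric-type series. First I would fix $\xi \in X$ and observe that, since $\xi \cdot \chi_{\{q^{2l}\}}(A) \in Y_l \cdot (b^*)^2$ by definition of $X$, the vector $\xi \cdot \chi_{\{q^{2l}\}}(A) \cdot b^2 \cdot q^{-2l}$ lies in $Y_l$ and, by Proposition \ref{p:fibhilb} together with the fact that $\{ f_{n,l}(b^*)^2 q^{-2l}\}$ is an orthonormal basis of $Y_l \cdot (b^*)^2$, its $Y_l$-norm is bounded by $\|\xi\|_\infty$; more precisely $\| \xi \cdot \chi_{\{q^{2l}\}}(A)\|_\infty \leq \|\xi\|_\infty$ since $\chi_{\{q^{2l}\}}(A)$ is a projection, and multiplying by $b^2 q^{-2l}$ on the right is an isometry $Y_l \cdot (b^*)^2 \to Y_l$.

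Next I would use that the shift operators $S^V \colon Y_l \to Y_l$, $S^H \colon Y_l \to Y_{l+1}$ are isometries (they send orthonormal bases to orthonormal systems) and that $\|\Ga\|_\infty = 1$, so that each composite $\Ga (S^H \Ga)^{m-l} S^V$ has operator norm at most $1$. Consequently
\[
\big\| \Ga (S^H \Ga)^{m-l} S^V\big( \xi \cdot \chi_{\{q^{2l}\}}(A) \cdot b^2 \cdot q^{-2l}\big)\big\| \leq \|\xi\|_\infty
\]
for every $0 \leq l \leq m$. The key point here is that the $Y_k$'s for distinct $k$ are mutually orthogonal subspaces of $S_q^2$ (being ranges of orthogonal spectral projections $\chi_{\{q^{2k}\}}(A)$ summing to a sub-identity), so the inner sum over $l$, whose $l$-th term lands in $Y_{m+1}$ after applying $\Ga(S^H\Ga)^{m-l}S^V$ — wait, one must be careful: $S^V$ preserves $Y_l$, then $(S^H\Ga)^{m-l}$ moves it to $Y_m$, then $\Ga$ keeps it in $Y_m$; so actually all $m+1$ inner terms (for $l = 0, \dots, m$) land in the \emph{same} fiber $Y_m$, hence add in norm rather than orthogonally. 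I would therefore bound the inner sum by $(m+1)\|\xi\|_\infty$ using the triangle inequality.

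Then the outer sum over $m$ has terms bounded by $q^m \frac{1-q^2}{(1-q^{2(m+1)})^{1/2}} (m+1) \|\xi\|_\infty \leq q^m (1-q^2)(1-q^2)^{-1/2}(m+1)\|\xi\|_\infty = (1-q^2)^{1/2}(m+1) q^m \|\xi\|_\infty$, and since $\sum_{m=0}^\infty (m+1) q^m = (1-q)^{-2}$ I obtain $\| \int^V \xi \| \leq (1-q^2)^{1/2}(1-q)^{-2}\|\xi\|_\infty$, which is the claimed bound; well-definedness follows because the same estimate shows the defining series converges absolutely in the norm of $S_q^2$. The main obstacle I anticipate is the bookkeeping of which fiber $Y_k$ each intermediate vector lives in — in particular checking that $S^V$, $S^H$ and $\Ga$ are genuinely norm-$\leq 1$ on the relevant fibers (which needs Proposition \ref{p:fibhilb} and the explicit diagonal coefficients of $\Ga$, whose supremum over $n \geq k+1$ of $q^{n-k-1}((1-q^{2(k+1)})/(1-q^{2n}))^{1/2}$ is indeed $1$, attained in the limit $n \to k+1$), and then correctly deciding whether the inner $l$-sum contributes a factor $m+1$ (triangle inequality, same fiber) rather than being controlled by orthogonality.
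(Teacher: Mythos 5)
Your proposal is correct and follows essentially the same route as the paper: bound each summand using that $S^V$, $S^H$ are isometries and $\|\Ga\|_\infty=1$, note that $\|\xi\cd\chi_{\{q^{2l}\}}(A)\cd b^2\cd q^{-2l}\|\leq\|\xi\|_\infty$, apply the triangle inequality to the inner sum (factor $m+1$), and sum $\sum_m (1-q^2)^{1/2}(m+1)q^m=(1-q^2)^{1/2}(1-q)^{-2}$, which is exactly the paper's one-line estimate. (Only a cosmetic slip: the supremum defining $\|\Ga\|_\infty$ is attained at $n=k+1$, not merely in a limit.)
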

\begin{proof}
Let $\xi \in X$. The result follows from the estimate:
\[
\begin{split}
& \sum_{ m = 0 }^\infty q^m \frac{1 - q^2}{( 1 - q^{2(m+1)})^{1/2}} \sum_{l = 0}^m \big\| \Ga (S^H \Ga)^{m - l} S^V \big( \xi \cd \chi_{\{q^{2l}\}}(A) \cd b^2 \cd q^{-2l}\big) \big\| \\
& \Q \leq \sum_{m = 0}^\infty (1 - q^2)^{1/2} q^m \cd (m+1) \cd \| \xi \|_\infty = (1 - q^2)^{1/2} (1 - q)^{-2} \cd \| \xi \|_\infty  . \qedhere
\end{split}
\]
\end{proof}

The next lemma illustrates how the vertical integral is going to form one part of the total quantum integral:

\begin{lemma}\label{l:quavertf}
Let $x = \mu \cd 1_{S_q^2} + \sum_{k = 0}^\infty \sum_{n =0}^\infty \la_{n,k} \cd f_{n,k} \in \T{Lip}_{D_q}(S_q^2)$ be given. We have the identity
\[
\int^V \pa_1(x) = \sum_{m= 0}^\infty \sum_{n = m+1}^\infty \la_{n,m} \cd f_{n,m}  .
\]
\end{lemma}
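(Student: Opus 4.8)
The plan is to compute, straight from the defining series, the coefficient of each matrix unit $f_{N,k}$ in $\int^V \pa_1(x)$ and to check that it is $\la_{N,k}$ for $N \geq k+1$ and $0$ for $N \leq k$. By Proposition~\ref{p:totaldern}, together with the identities $(b^*)^2 b^2 = A^2$ and $f_{n,l} A^2 = q^{4l} f_{n,l}$, the element fed into the vertical integral at level $l$ is
\[
\eta_l := \pa_1(x) \cd \chi_{\{q^{2l}\}}(A) \cd b^2 \cd q^{-2l} = \frac{q^{2l}}{1 - q^2} \sum_{n = 0}^\infty \big( \la_{n,l-1}\, q^{-3l+1}\sqrt{1 - q^{2l}} - \la_{n+1,l}\, q^{-2l-n}\sqrt{1 - q^{2(n+1)}} \big) f_{n,l},
\]
with the convention $\la_{n,-1} := 0$; this in particular shows $\pa_1(x) \in X$, so that $\int^V \pa_1(x)$ is defined.

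First I would establish, by a straightforward induction on $j \in \nn \cup \{0\}$ using \eqref{eq:isometries} and the definition of $\Ga$, the closed formula
\[
\Ga(S^H\Ga)^j S^V(f_{n,l}) = q^{(n-l)(j+1)} \Big( \frac{\prod_{i=1}^{j+1}(1-q^{2(l+i)})}{\prod_{i=1}^{j+1}(1-q^{2(n+i)})} \Big)^{1/2} f_{n+j+1,\,l+j} \qquad (n \geq l),
\]
the left-hand side being $0$ when $0 \leq n \leq l-1$. Since $\Ga(S^H\Ga)^j S^V$ is bounded and multiplication in $S_q^2$ is continuous, I may apply it term by term to the norm-convergent series for $\eta_l$, and then extract matrix-unit coefficients of $\int^V \pa_1(x)$ from the absolutely convergent series defining it. As $\Ga(S^H\Ga)^{m-l}S^V(\eta_l)$ lies in $Y_m$ and, by the formula, the $f_{n,l}$-component of $\eta_l$ lands on a scalar multiple of $f_{n+m-l+1,\,m}$, left- and right-multiplying that series by $f_{k,N}$ and $f_{k,k}$ and using Lemma~\ref{l:matrix} leaves only the $m=k$ summand of the outer sum and, inside it, only the terms with $n = N-k+l-1$; this already forces the coefficient of $f_{N,k}$ to vanish whenever $N \leq k$.

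For $N \geq k+1$, collecting the coefficient of $f_{N,k}$ and abbreviating
\[
\Theta_l := q^{(N-k-1)(k-l+1)}\Big( \frac{\prod_{i=1}^{k-l+1}(1-q^{2(l+i)})}{\prod_{i=1}^{k-l+1}(1-q^{2(N-k+l-1+i)})} \Big)^{1/2},
\]
one is left with the finite sum
\[
-\,\frac{q^k}{(1-q^{2(k+1)})^{1/2}} \sum_{l=0}^k \big( \la_{N-k+l-1,\,l-1}\, q^{-l+1}\sqrt{1-q^{2l}} - \la_{N-k+l,\,l}\, q^{-(N-k+l-1)}\sqrt{1-q^{2(N-k+l)}} \big)\,\Theta_l .
\]
Substituting $l \mapsto l+1$ in the first half of the sum (its $l=-1$ term vanishing since $\la_{n,-1}=0$) turns this into a telescoping sum in which, for every $0 \leq l \leq k-1$, the coefficient of $\la_{N-k+l,\,l}$ is a fixed multiple of $q^{-l}\sqrt{1-q^{2(l+1)}}\,\Theta_{l+1} - q^{-(N-k+l-1)}\sqrt{1-q^{2(N-k+l)}}\,\Theta_l$, while the only remaining ``boundary'' term is the $l=k$ contribution from the second half.

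The heart of the argument is that the displayed difference of $\Theta$-terms vanishes for $0 \leq l \leq k-1$: each of its two summands reduces to a power of $q$ times $\big(\prod_{j=l+1}^{k+1}(1-q^{2j})\, \big/\, \prod_{j=N-k+l+1}^{N}(1-q^{2j})\big)^{1/2}$, the two finite products arising by an elementary collapse, and the two $q$-exponents coincide because $-l+(N-k-1)(k-l) = -(N-k+l-1)+(N-k-1)(k-l+1)$. Hence the telescoping sum reduces to the $l=k$ term, and since $\Theta_k = q^{N-k-1}\big((1-q^{2(k+1)})/(1-q^{2N})\big)^{1/2}$ all surviving powers of $q$ and all square-root factors cancel, leaving the coefficient of $f_{N,k}$ equal to exactly $\la_{N,k}$; summing over $k \geq 0$ and $N > k$ then gives $\int^V \pa_1(x) = \sum_{m=0}^\infty \sum_{n=m+1}^\infty \la_{n,m} f_{n,m}$. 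The one delicate point is this telescoping/cancellation identity, i.e.\ accurately bookkeeping the finite products of $(1-q^{2j})$-factors and the $q$-powers produced in turn by $S^V$, $S^H$, $\Ga$ and by $\eta_l$ itself; everything else is continuity of bounded operators together with reindexing.
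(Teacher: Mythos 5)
Your proposal is correct and follows essentially the same route as the paper: feed the formula of Proposition~\ref{p:totaldern} into the defining series for $\int^V$, shift $l \mapsto l+1$ in the $\la_{n,l-1}$ half, and telescope, leaving only the $l=m$ boundary term whose prefactors cancel exactly. The only difference is organizational --- you first derive a closed formula for $\Ga(S^H\Ga)^jS^V(f_{n,l})$ and cancel at the level of explicit coefficients $\Theta_l$, whereas the paper achieves the same cancellation via the one-step identity $q^{-l}\sqrt{1-q^{2(l+1)}}\,\Ga(S^H\Ga)^{m-l-1}(f_{n+2,l+1}) = q^{-n}\sqrt{1-q^{2(n+1)}}\,\Ga(S^H\Ga)^{m-l}(f_{n+1,l})$ --- and both your telescoping identity and the final evaluation of the $l=k$ term check out.
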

\begin{proof}
Using Proposition \ref{p:totaldern} we have that
\[
\begin{split}
& (1-q^2) \pa_1(x) \cd  \chi_{\{q^{2l}\}}(A)\cd b^2\cd  q^{-2l} \\
& \Q = \sum_{n = 0}^\infty \la_{n,l-1} \cd q^{-l+1} \sqrt{1 - q^{2l}} - \la_{n+1,l} \cd q^{ -n} \sqrt{1 - q^{2(n+1)}} \Big) \cd f_{n,l}.
\end{split}
\]
We therefore obtain that
\begin{equation}\label{eq:out-vder}
\begin{split}
& \int^V \pa_1( x)  \\
& \Q =- \sum_{ m = 0 }^\infty q^m \frac{1 - q^2}{( 1 - q^{2(m+1)})^{1/2}} \cd \sum_{l = 0}^m \Ga (S^H \Ga)^{m - l} S^V \big(  \pa_1(x) \cd  \chi_{\{q^{2l}\}}(A)\cd b^2\cd  q^{-2l} \big) \\
& \Q =- \sum_{ m = 0 }^\infty \frac{q^m}{( 1 - q^{2(m+1)})^{1/2}} \\ 
& \qqq \cd \sum_{l = 0}^m \sum_{n = l}^\infty 
\Big( \la_{n,l-1} \cd q^{-l+1} \sqrt{1 - q^{2l}} - \la_{n+1,l} \cd q^{-n} \sqrt{1 - q^{2(n+1)}} \Big) \\
& \qqqq \cd \Ga (S^H \Ga)^{m - l} (f_{n+1,l}) . 
\end{split}
\end{equation}
We now fix $m \in \nn \cup \{0\}$ and consider the first term in the above sum separately. Note that $\la_{p,-1}=0$ for all $p \in \nn \cup \{0\}$:
\[
\begin{split}
& \sum_{l = 0}^m \sum_{n = l}^\infty \la_{n,l-1} \cd q^{-l+1} \cd \sqrt{1 - q^{2l}} \cd \Ga (S^H \Ga)^{m - l}(f_{n+1,l}) \\
& \Q = \sum_{l=0}^{m-1}\sum_{n=l}^\infty \la_{n+1,l} \cd q^{-l} \sqrt{1 - q^{2(l+1)}} \cd \Ga(S^H\Ga)^{m-l-1}(f_{n+2,l+1}) \\
& \Q = \sum_{l=0}^{m-1}\sum_{n=l}^\infty \la_{n+1,l} \cd q^{-n} \sqrt{1 - q^{2(n+1)}} \cd \Ga(S^H\Ga)^{m-l}(f_{n+1,l}).
\end{split}
\]
Thus, combining this with  Expression (\ref{eq:out-vder}), we obtain the formula
\[
\begin{split}
\int^V \pa_1(x) 
& = \sum_{m = 0}^\infty \frac{q^m}{\sqrt{1 - q^{2(m+1)}}}\sum_{n = m}^\infty \la_{n+1,m} \cd q^{-n} \sqrt{1 - q^{2(n+1)}} \cd \Ga(f_{n+1,m}) \\
& = \sum_{m = 0}^\infty \sum_{n = m}^\infty \la_{n+1,m} \cd f_{n + 1,m}
\end{split}
\]
and the proof is complete.
\end{proof}

We end this section by proving a continuity result for the vertical integral:

\begin{lemma}\label{l:vertcontf}
Let $k \in \nn \cup \{0\}$. We have the estimate 
\[
\big\| (\int^V \xi )\cd \chi_{ [0,q^{2k}]}(A) \big\| \leq \frac{q^k\cd (k+1)}{(1-q)^2}\cd \|\xi\|_\infty  ,
\]
for all $\xi \in X$.
\end{lemma}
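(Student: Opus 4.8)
The plan is to split $\int^V\xi$ into its ``fiber components'' lying in the subspaces $Y_m$ and then observe that right multiplication by $\chi_{[0,q^{2k}]}(A)$ annihilates all the low fibers. The first step is to record that for every $\xi \in X$ and every $l \in \nn\cup\{0\}$ the element $\xi\cd\chi_{\{q^{2l}\}}(A)\cd b^2\cd q^{-2l}$ lies in $Y_l$ with norm at most $\|\xi\|_\infty$. Indeed, by the definition of $X$ we may expand $\xi\cd\chi_{\{q^{2l}\}}(A)$ in the orthonormal basis $\{ f_{n,l}(b^*)^2 q^{-2l}\}_{n=0}^\infty$ of $Y_l\cd(b^*)^2$ furnished by Proposition \ref{p:fibhilb}, say $\xi\cd\chi_{\{q^{2l}\}}(A) = \sum_n c_n\cd f_{n,l}(b^*)^2 q^{-2l}$ with $\sum_n |c_n|^2 = \|\xi\cd\chi_{\{q^{2l}\}}(A)\|^2 \leq \|\xi\|_\infty^2$; multiplying by $b^2 q^{-2l}$ and using $(b^*)^2 b^2 = A^2$ together with $f_{n,l}\cd A^2 = q^{4l} f_{n,l}$ — which holds because $f_{n,l} = f_{n,l}\cd\chi_{\{q^{2l}\}}(A)$ by Lemma \ref{l:matrix} — we obtain $\xi\cd\chi_{\{q^{2l}\}}(A)\cd b^2 q^{-2l} = \sum_n c_n\cd f_{n,l} \in Y_l$ with the same norm.

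The second step is bookkeeping with the fiber indices. Since $S^V$ maps $Y_l$ into $Y_l$, each factor $S^H\Ga$ maps $Y_j$ into $Y_{j+1}$, and the outermost $\Ga$ preserves the fiber, the summand $\Ga(S^H\Ga)^{m-l}S^V\big(\xi\cd\chi_{\{q^{2l}\}}(A)\, b^2\, q^{-2l}\big)$ lies in $Y_m$; moreover it has norm at most $\|\xi\|_\infty$ because $\Ga$, $S^H$ and $S^V$ all have operator norm one. Setting
\[
w_m := - q^m\frac{1-q^2}{(1-q^{2(m+1)})^{1/2}}\sum_{l=0}^m \Ga(S^H\Ga)^{m-l}S^V\big(\xi\cd\chi_{\{q^{2l}\}}(A)\, b^2\, q^{-2l}\big) \in Y_m ,
\]
the definition of the quantum vertical integral reads $\int^V\xi = \sum_{m=0}^\infty w_m$, and from $(1-q^2)(1-q^{2(m+1)})^{-1/2}\leq (1-q^2)^{1/2}\leq 1$ we get $\|w_m\| \leq (m+1)\cd q^m\cd\|\xi\|_\infty$.

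The third step exploits that $w_m \in Y_m = S_q^2\cd\chi_{\{q^{2m}\}}(A)$ and that $q^{2m}\leq q^{2k}$ precisely when $m\geq k$: hence $\chi_{\{q^{2m}\}}(A)\cd\chi_{[0,q^{2k}]}(A)$ equals $\chi_{\{q^{2m}\}}(A)$ for $m\geq k$ and vanishes for $m<k$, so $w_m\cd\chi_{[0,q^{2k}]}(A)$ is $w_m$ for $m\geq k$ and $0$ otherwise. Multiplying the norm-convergent series $\int^V\xi = \sum_m w_m$ on the right by the bounded operator $\chi_{[0,q^{2k}]}(A)$ therefore gives $(\int^V\xi)\cd\chi_{[0,q^{2k}]}(A) = \sum_{m\geq k} w_m$. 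Finally, the triangle inequality, the bound on $\|w_m\|$, and the elementary computation $\sum_{m\geq k}(m+1)q^m = \frac{q^k}{(1-q)^2}\big(1+k(1-q)\big) \leq \frac{q^k(k+1)}{(1-q)^2}$ yield exactly the claimed estimate.

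The only place that needs care is the second step: keeping track of which fiber $Y_m$ the operator $\Ga(S^H\Ga)^{m-l}S^V(\cdots)$ belongs to, since this is precisely what makes right multiplication by $\chi_{[0,q^{2k}]}(A)$ truncate the sum to $m\geq k$. Everything else is the triangle inequality and a geometric-series estimate.
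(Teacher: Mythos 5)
Your proof is correct and follows essentially the same route as the paper's: each summand $\Ga(S^H\Ga)^{m-l}S^V(\xi\cd\chi_{\{q^{2l}\}}(A)\,b^2\,q^{-2l})$ lies in $Y_m$, so right multiplication by $\chi_{[0,q^{2k}]}(A)$ truncates the series to $m\geq k$, after which the triangle inequality and $\sum_{m\geq k}(m+1)q^m\leq q^k(k+1)(1-q)^{-2}$ give the bound. Your first step, verifying explicitly that $\xi\cd\chi_{\{q^{2l}\}}(A)\cd b^2\cd q^{-2l}$ lands in $Y_l$ with norm at most $\|\xi\|_\infty$, is a detail the paper leaves implicit, but it is consistent with what the paper uses.
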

\begin{proof}
Let $\xi \in X$ be given. For each $m \in \nn \cup \{0\}$ and each $l \in \{0,1,\ldots,m\}$, we have that $\Ga (S^H \Ga)^{m - l} S^V ( \xi \cd \chi_{\{q^{2l}\}}(A)  b^2 q^{-2l} ) \in Y_m$.  Therefore, 
\[
(\int^V \xi) \cd \chi_{ [0,q^{2k}]}(A) = - \sum_{m = k}^\infty \frac{q^m(1 - q^2)}{\sqrt{ 1 - q^{2(m+1)}}} 
\cd \sum_{l = 0}^m \Ga (S^H \Ga)^{m - l} S^V\big( \xi \cd \chi_{\{q^{2l}\}}(A)\cd b^2 \cd q^{-2l} \big)  .
\]
We may thus estimate as follows:
\[
\begin{split}
& \big\|(\int^V \xi) \cd \chi_{ [0,q^{2k}]}(A) \big\| \leq  \sum_{m = k}^\infty \frac{q^m(1 - q^2)}{\sqrt{1 - q^{2(m+1)}}} (m + 1) \cd \|\xi\|_\infty \\
& \Q \leq \sum_{m = k}^\infty (m + 1) q^m  \cd  \|\xi\|_\infty \leq \frac{ q^k (k +1)}{(1-q)^2}\cd \|\xi\|_\infty  .
\end{split}
\]
This proves the lemma.
\end{proof}

\subsection{The quantum horizontal integral}
In this subsection we construct the horizontal part of the quantum integral. Recall the definition of the isometries $S^V : Y_k \to Y_k$ and $S^H : Y_k \to Y_{k+1}$, $k \in \nn \cup \{0\}$, from Equation \eqref{eq:isometries}.

For each $k \in \nn$, define the diagonal operator
\[
\De : Y_k \to Y_k \Q \De(f_{n,k}) = \fork{ccc}{ q^{k-n+1}\cd \left( \frac{1-q^{2(n-1)}}{1-q^{2k}}\right)^{1/2} \cd f_{n,k} & \T{for} & 0 < n \leq k \\ 0 &  & \T{elsewhere}}  .
\]
We remark that $\De : Y_k \to Y_k$ is bounded with operator norm satisfying the estimate 
\[
\| \De \|_\infty \leq q \Q \T{for all } k \in \nn  .
\]
Furthermore, for each $k \in \nn \cup \{0\}$, introduce the orthogonal projection
\[
P : Y_k \to Y_k \Q P(f_{n,k}) = \fork{ccc}{ f_{n,k} & \T{for} & 0 \leq n \leq k \\ 0 & \T{for} & n \geq k+ 1}  .
\]

\begin{dfn}
We define the \emph{quantum horizontal integral}
\[
\int^H : X  \to S_q^2
\]
by the formula
\[
\begin{split}
\int^H \xi & := \sum_{m = 0}^\infty q^m\frac{1- q^2}{(1-q^{2(m+1)})^{1/2}}\\
& \Q \Q   \cd \sum_{l = m + 1}^\infty (S^H)^* \big( ( S^H)^* \De \big)^{l - m -1}   P S^V 
(   \xi  \cd \chi_{\{q^{2l}\}}(A)\cd b^2 \cd q^{-2l} )  .
\end{split}
\]
\end{dfn}
\begin{lemma}
The quantum horizontal integral is a well-defined bounded operator with operator norm bounded by $(1-q^2)^{1/2}(1 - q)^{-2}$.
\end{lemma}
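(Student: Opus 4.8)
The plan is to estimate the defining double series for $\int^H \xi$ term by term, in direct analogy with the proof of the corresponding statement for the vertical integral. First I would record the elementary operator-norm bounds on the building blocks: the shifts $S^V : Y_k \to Y_k$ and $S^H : Y_k \to Y_{k+1}$ are isometries, so $\| S^V \|_\infty = 1$ and $\| (S^H)^* \|_\infty \leq 1$; the projection satisfies $\| P \|_\infty \leq 1$; and, crucially, the diagonal operator $\De : Y_k \to Y_k$ obeys $\| \De \|_\infty \leq q$ uniformly in $k \in \nn$. I would also note that for $\xi \in X$ and $l \in \nn \cup \{0\}$ one has $\| \xi \cd \chi_{\{q^{2l}\}}(A) \cd b^2 \cd q^{-2l} \| \leq \| \xi \|_\infty$: indeed $\xi \cd \chi_{\{q^{2l}\}}(A)$ lies in $Y_l \cd (b^*)^2$ by definition of $X$, right multiplication by $(b^*)^2 b^2 = A^2$ acts on $Y_l$ as multiplication by the scalar $q^{4l}$, and the isometric identifications of $Y_l$ and $Y_l \cd (b^*)^2$ with $\ell^2(\nn \cup \{0\})$ from Proposition \ref{p:fibhilb} identify the norm in question with $\| \xi \cd \chi_{\{q^{2l}\}}(A) \| \leq \| \xi \|_\infty$.

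Next I would verify that each summand genuinely lies in $S_q^2$ by tracking the fibers: for $l \geq m+1$ the vector $\xi \cd \chi_{\{q^{2l}\}}(A) \cd b^2 \cd q^{-2l}$ sits in $Y_l$, the operators $S^V$ and $P$ keep it in $Y_l$, the product $\big( (S^H)^* \De \big)^{l - m - 1}$ carries it down to $Y_{m+1}$ — with every intermediate application of $\De$ occurring on a fiber $Y_j$ with $j \geq m+2 \geq 1$, hence inside the domain of $\De$ — and the final $(S^H)^*$ lands in $Y_m \su S_q^2$. Submultiplicativity of the operator norm across this composition then gives
\[
\big\| (S^H)^* \big( (S^H)^* \De \big)^{l - m - 1} P S^V \big( \xi \cd \chi_{\{q^{2l}\}}(A) \cd b^2 \cd q^{-2l} \big) \big\| \leq q^{l - m - 1} \cd \| \xi \|_\infty .
\]

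The remainder is a geometric-series computation. Summing over $l \geq m+1$ gives $\sum_{l = m+1}^\infty q^{l - m - 1} \| \xi \|_\infty = (1-q)^{-1} \| \xi \|_\infty$, and using $1 - q^{2(m+1)} \geq 1 - q^2$ we may bound $(1-q^2)(1 - q^{2(m+1)})^{-1/2} \leq (1-q^2)^{1/2}$, so that summing over $m$ yields
\[
\Big\| \int^H \xi \Big\| \leq \sum_{m = 0}^\infty q^m \cd (1-q^2)^{1/2} \cd (1-q)^{-1} \cd \| \xi \|_\infty = (1-q^2)^{1/2} (1-q)^{-2} \cd \| \xi \|_\infty .
\]
Since the sum of the norms of all the summands is finite, the double series converges absolutely in the Banach space $S_q^2$, so $\int^H$ is a well-defined linear map, and the displayed estimate is precisely the asserted operator-norm bound. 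The only point requiring any care is the fiber bookkeeping inside $\big( (S^H)^* \De \big)^{l - m - 1}$ — in particular checking that each $\De$ is applied on a fiber $Y_j$ with $j \geq 1$ — but the uniform bound $\| \De \|_\infty \leq q$ is exactly what turns the inner sum into a convergent geometric series, so there is no genuine obstacle beyond what was already handled for the vertical integral.
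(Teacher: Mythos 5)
Your proposal is correct and follows essentially the same route as the paper: a term-by-term estimate of the double series using that $S^V$, $(S^H)^*$ and $P$ are contractions, $\| \De \|_\infty \leq q$, and $\| \xi \cd \chi_{\{q^{2l}\}}(A) \cd b^2 \cd q^{-2l} \| \leq \| \xi \|_\infty$, followed by the bound $(1-q^2)(1-q^{2(m+1)})^{-1/2} \leq (1-q^2)^{1/2}$ and two geometric series. The extra fiber bookkeeping and the absolute-convergence remark you include are details the paper leaves implicit, but they do not change the argument.
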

\begin{proof} Let $\xi \in X$. The result of the lemma follows from the estimate:
\[
\begin{split}
& \sum_{m = 0}^\infty q^m \frac{1- q^2}{(1 - q^{2(m+1)})^{1/2}} \cd \sum_{l = m + 1}^\infty \big\| (S^H)^* \big( ( S^H)^* \De \big)^{l - m -1}  P S^V 
(   \xi \chi_{\{q^{2l}\}}(A)\cd b^2 \cd q^{-2l} ) \big\| \\
& \Q \leq 
\sum_{m = 0}^\infty q^m \cd \sum_{l = m + 1}^\infty q^{l - m-1} \cd \| \xi \|_\infty \cd (1-q^2)^{1/2} \\
& \Q \leq (1 - q)^{-2} \cd \| \xi \|_\infty \cd (1-q^2)^{1/2}. \qedhere
\end{split}
\]
\end{proof}
We now illustrate how the horizontal integral will form the remaining part of the total quantum integral:

\begin{lemma}\label{l:quahorif}
Let $x = \mu \cd 1_{S_q^2} + \sum_{k = 0}^\infty \sum_{n =0}^\infty \la_{n,k} \cd f_{n,k} \in \T{Lip}_{D_q}(S_q^2)$ be given. We have the identity
\[
\int^H \pa_1(x) = \sum_{m = 0}^\infty \sum_{n = 0}^m \la_{n,m} \cd f_{n,m}  .
\]
\end{lemma}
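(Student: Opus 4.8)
The plan is to mirror the proof of Lemma~\ref{l:quavertf}, with the ``upward'' operators $S^H$ and $\Ga$ replaced by the ``downward'' operators $(S^H)^*$, $\De$ and $P$. First I would recall, from the proof of Lemma~\ref{l:quavertf}, the consequence of Proposition~\ref{p:totaldern}:
\[
(1-q^2)\cd\pa_1(x)\cd\chi_{\{q^{2l}\}}(A)\cd b^2\cd q^{-2l}
= \sum_{n=0}^\infty\big(A_{n,l} - B_{n,l}\big)\cd f_{n,l},
\]
where $A_{n,l} := \la_{n,l-1}\,q^{-l+1}\sqrt{1-q^{2l}}$ and $B_{n,l} := \la_{n+1,l}\,q^{-n}\sqrt{1-q^{2(n+1)}}$. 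Inserting this into the definition of $\int^H$ and using that $P S^V(f_{n,l})$ equals $f_{n+1,l}$ for $n\le l-1$ and vanishes otherwise, I obtain
\[
\int^H\pa_1(x) = \sum_{m=0}^\infty\frac{q^m}{\sqrt{1-q^{2(m+1)}}}\sum_{l=m+1}^\infty\sum_{n=0}^{l-1}\big(A_{n,l}-B_{n,l}\big)\cd(S^H)^*\big((S^H)^*\De\big)^{l-m-1}(f_{n+1,l}),
\]
all interchanges of summation being justified by absolute convergence, using the norm bound for $\int^H$ together with $\|\pa_1(x)\|_\infty<\infty$.

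The heart of the argument is a telescoping identity in $l$ and $n$. A one-step computation with the definition of $\De$ gives, for $0\le n\le l-1$,
\[
(S^H)^*\De(f_{n+2,l+1}) = q^{l-n}\Big(\tfrac{1-q^{2(n+1)}}{1-q^{2(l+1)}}\Big)^{1/2}\cd f_{n+1,l},
\]
and hence $A_{n+1,l+1}\cd(S^H)^*\big((S^H)^*\De\big)^{l-m}(f_{n+2,l+1}) = B_{n,l}\cd(S^H)^*\big((S^H)^*\De\big)^{l-m-1}(f_{n+1,l})$. Therefore the $B$-part of the double sum, after re-indexing $l\mapsto l-1$ and $n\mapsto n-1$, reproduces the $A$-part with the restricted inner range $1\le n\le l-1$ and with $l\ge m+2$. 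Subtracting, the only surviving contributions are the slice $l=m+1$ of the $A$-part (where the iterated operator is the identity and $(S^H)^*(f_{n+1,m+1}) = f_{n,m}$ for $0\le n\le m$), together with, for each $l\ge m+2$, the single ``edge'' term at $n=0$; but the latter contains $\big((S^H)^*\De\big)^{l-m-1}(f_{1,l})$ with $l-m-1\ge1$, and this vanishes because $\De(f_{1,l})=0$ (the factor $(1-q^{2(1-1)})^{1/2}$ is zero). What remains is
\[
\int^H\pa_1(x) = \sum_{m=0}^\infty\frac{q^m}{\sqrt{1-q^{2(m+1)}}}\sum_{n=0}^m A_{n,m+1}\cd f_{n,m} = \sum_{m=0}^\infty\sum_{n=0}^m\la_{n,m}\cd f_{n,m},
\]
since $A_{n,m+1} = \la_{n,m}\,q^{-m}\sqrt{1-q^{2(m+1)}}$ exactly cancels the prefactor $q^m(1-q^{2(m+1)})^{-1/2}$.

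I expect the main obstacle to be organisational rather than conceptual: keeping precise control of the three overlapping index ranges ($m$, $l$, $n$) while performing the telescoping shift, and correctly identifying which boundary contributions survive --- in particular verifying that the leftover ``edge'' terms at $n=0$ are genuinely annihilated by $\De$, and that the boundary slice $l=m+1$ collapses to exactly the claimed coefficient after the powers of $q$ and the square-roots in $A_{n,m+1}$ cancel against the prefactor. Beyond this bookkeeping, the computation is a reflection of the one already carried out for the vertical integral in Lemma~\ref{l:quavertf}.
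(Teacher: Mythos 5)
Your proposal is correct and follows essentially the same route as the paper: insert the formula from Proposition \ref{p:totaldern}, truncate the inner sum via $PS^V$, telescope the $\la_{n,l-1}$-part against the $\la_{n+1,l}$-part using the one-step identity for $(S^H)^*\De$, kill the $n=0$ edge terms (the paper invokes $((S^H)^*)^2f_{1,p}=0$, you invoke $\De(f_{1,l})=0$ --- both valid), and read off the surviving $l=m+1$ slice. The bookkeeping and the final cancellation of $A_{n,m+1}$ against the prefactor are all as in the paper's proof.
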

\begin{proof}
Using Proposition \ref{p:totaldern} we compute as follows:
\begin{equation}\label{eq:out-hder}
\begin{split}
\int^H \pa_1(x) & = \sum_{m = 0}^\infty q^m\frac{1- q^2}{(1-q^{2(m+1)})^{1/2}} \\
& \Q \Q \cd \sum_{l = m + 1}^\infty (S^H)^* \big( ( S^H)^* \De \big)^{l - m -1}   P S^V 
(   \pa_1(x)  \cd \chi_{\{q^{2l}\}}(A)\cd b^2 \cd q^{-2l} ) \\
& = \sum_{m = 0}^\infty \frac{ q^m}{(1-q^{2(m+1)})^{1/2}}  \\
& \Q \Q \cd \sum_{l = m + 1}^\infty (S^H)^* \big( ( S^H)^* \De \big)^{l - m -1}   
 \Big( \sum_{n = 0}^{l-1} \Big( \la_{n,l-1} \cd q^{-l+1} \sqrt{1 - q^{2l}} \\
& \Q \Q \Q - \la_{n+1,l} \cd q^{-n} \sqrt{1 - q^{2(n+1)}} \Big) \cd f_{n+1,l} \Big). 
\end{split}
\end{equation}
We now fix $m \in \nn \cup \{0\}$ and consider part of the first term in the above sum separately. Indeed, we only look at the part of the sum where $l \geq m +2$, saving the remaining term for later. Note that $((S^H)^*)^2 f_{1,p}=0$ for all $p \in \nn \cup \{0\}$:
\[
\begin{split}
& \sum_{l = m + 2}^\infty (S^H)^* \big( ( S^H)^* \De \big)^{l - m -1}  \sum_{n = 0}^{l-1} \la_{n,l-1} \cd q^{-l+1} \sqrt{1 - q^{2l}}  \cd f_{n+1,l}\\
& \Q =\sum_{l = m + 1}^\infty (S^H)^* \big( ( S^H)^* \De \big)^{l - m }  \sum_{n = 0}^{l-1}  \la_{n+1,l} \cd q^{-l} \sqrt{1 - q^{2(l+1)}}  \cd f_{n+2,l+1}\\
&  \Q =\sum_{l = m + 1}^\infty (S^H)^* \big( ( S^H)^* \De \big)^{l - m -1}  \sum_{n = 0}^{l-1}  \la_{n+1,l} \cd q^{-n}\sqrt{1-q^{2(n+1)}} \cd f_{n+1,l}.
\end{split}
\]
Thus, combining with Expression (\ref{eq:out-hder}), we obtain the formula
\[ 
\begin{split}
\int^H \pa_1(x) &= \sum_{m = 0}^\infty \frac{q^m}{(1-q^{2(m+1)})^{1/2}} (S^H)^* \sum_{n=0}^m \la_{n,m} \cd q^{-m}\sqrt{1-q^{2(m+1)}}f_{n+1,m+1}\\
& =\sum_{m = 0}^\infty \sum_{n=0}^m \la_{n,m} \cd f_{n,m}
\end{split}
\]
and the proof is complete.
\end{proof}

We end this subsection by providing a continuity result for the horizontal integral:

\begin{lemma}\label{l:horicont}
Let $k \in \nn\cup \{0\}$. We have the estimate:
\[
\big\| (\int^H \xi) \cd \chi_{[0,q^{2k}]}(A) \big\| \leq q^k \cd (1 - q)^{-2} \cd \| \xi \|_\infty  \Q \mbox{for all }\xi \in X.
\]
\end{lemma}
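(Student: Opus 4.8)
The plan is to follow the same pattern established in the proof of Lemma \ref{l:vertcontf} for the vertical integral, since the horizontal integral has an entirely analogous structure. The key observation is that for each $m \in \nn \cup \{0\}$ and each $l \geq m+1$, the summand $(S^H)^*\big((S^H)^*\De\big)^{l-m-1} P S^V\big(\xi \cd \chi_{\{q^{2l}\}}(A) \cd b^2 \cd q^{-2l}\big)$ lands in $Y_m$: indeed $S^V$ maps $Y_l$ to $Y_l$, the projection $P$ keeps us in $Y_l$, each application of $(S^H)^*\De$ maps $Y_l$ to $Y_{l-1}$ (with $\De$ preserving the fiber and $(S^H)^*$ lowering the index by one), and the final $(S^H)^*$ takes $Y_{m+1}$ to $Y_m$. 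Since the fibers $Y_j$ are mutually orthogonal (as $\chi_{\{q^{2j}\}}(A)$ are orthogonal projections, cf. Proposition \ref{p:fibhilb}), multiplying $\int^H \xi$ by $\chi_{[0,q^{2k}]}(A) = \sum_{j \geq k} \chi_{\{q^{2j}\}}(A)$ simply discards all contributions from $Y_m$ with $m < k$.

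Concretely, I would first record that
\[
(\int^H \xi) \cd \chi_{[0,q^{2k}]}(A) = \sum_{m = k}^\infty q^m \frac{1 - q^2}{(1 - q^{2(m+1)})^{1/2}} \cd \sum_{l = m+1}^\infty (S^H)^* \big((S^H)^* \De\big)^{l - m - 1} P S^V \big(\xi \cd \chi_{\{q^{2l}\}}(A) \cd b^2 \cd q^{-2l}\big) .
\]
Then I would estimate the norm term by term, using that $S^V$, $S^H$, $(S^H)^*$ and $P$ all have operator norm at most one, that $\|\De\|_\infty \leq q$, and that $\|\xi \cd \chi_{\{q^{2l}\}}(A) \cd b^2 \cd q^{-2l}\| \leq \|\xi\|_\infty$ (this last bound following because $\xi \cd \chi_{\{q^{2l}\}}(A) \in Y_l \cd (b^*)^2$ and $f_{n,l}(b^*)^2 q^{-2l}$ is an orthonormal basis of $Y_l \cd (b^*)^2$, or more simply because $\|\chi_{\{q^{2l}\}}(A) b^2 q^{-2l}\|_\infty \leq 1$). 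This yields the bound
\[
\big\| (\int^H \xi) \cd \chi_{[0,q^{2k}]}(A) \big\| \leq \sum_{m = k}^\infty q^m (1 - q^2)^{1/2} \cd \sum_{l = m+1}^\infty q^{l - m - 1} \cd \|\xi\|_\infty \leq \sum_{m = k}^\infty q^m \cd \|\xi\|_\infty \leq \frac{q^k}{(1-q)^2} \cd \|\xi\|_\infty ,
\]
where I have used $\sum_{l = m+1}^\infty q^{l-m-1} = (1-q)^{-1}$, then $(1-q^2)^{1/2}(1-q)^{-1} \leq 1$, and finally the geometric series $\sum_{m=k}^\infty q^m = q^k(1-q)^{-1} \leq q^k (1-q)^{-2}$.

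I do not anticipate a genuine obstacle here; the only point requiring a moment's care is verifying that each summand really does lie in $Y_m$ so that the orthogonality argument legitimately removes the low-index terms after multiplying by $\chi_{[0,q^{2k}]}(A)$ — this is where one must track the fiber-shifting behaviour of $S^H$, $(S^H)^*$, $\De$ and $P$ precisely. Everything else is a routine geometric-series estimate essentially identical to the one already carried out for $\int^V$.
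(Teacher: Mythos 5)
Your argument is the same as the paper's: use that each summand $(S^H)^*\big((S^H)^*\De\big)^{l-m-1}PS^V\big(\xi\cdot\chi_{\{q^{2l}\}}(A)\cdot b^2\cdot q^{-2l}\big)$ lies in $Y_m$, so that multiplying by $\chi_{[0,q^{2k}]}(A)$ kills the terms with $m<k$, and then estimate term by term with $\|\De\|_\infty\leq q$, the contractivity of $S^V$, $P$, $(S^H)^*$, the bound $\|\xi\cdot\chi_{\{q^{2l}\}}(A)\cdot b^2\cdot q^{-2l}\|\leq\|\xi\|_\infty$, and two geometric series. One step in your displayed chain is, however, false as written: you claim $(1-q^2)^{1/2}(1-q)^{-1}\leq 1$, but in fact
\[
(1-q^2)^{1/2}(1-q)^{-1}=\sqrt{\tfrac{1+q}{1-q}}>1 \quad\text{for } q\in(0,1),
\]
so the inequality $\sum_{m\geq k}q^m(1-q^2)^{1/2}\sum_{l\geq m+1}q^{l-m-1}\|\xi\|_\infty\leq\sum_{m\geq k}q^m\|\xi\|_\infty$ does not hold. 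The conclusion survives because you waste a factor $(1-q)^{-1}$ in your last step: grouping correctly, the inner sum gives $(1-q)^{-1}$, the outer sum gives $q^k(1-q)^{-1}$, and $(1-q^2)^{1/2}\leq 1$, so the term-by-term estimate yields exactly $q^k(1-q)^{-2}(1-q^2)^{1/2}\|\xi\|_\infty\leq q^k(1-q)^{-2}\|\xi\|_\infty$, which is the paper's computation. With that regrouping your proof is correct and coincides with the one in the paper.
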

\begin{proof}
This follows from the estimate:
\[
\begin{split}
& \sum_{m = k}^\infty q^m\frac{1- q^2}{(1-q^{2(m+1)})^{1/2}}  \cd \sum_{l = m + 1}^\infty \big\| (S^H)^* \big( ( S^H)^* \De \big)^{l - m -1}  P S^V 
(   \xi  \chi_{\{q^{2l}\}}(A)\cd b^2q^{-2l} ) \big\| \\
& \Q \leq \sum_{m = k}^\infty q^m \cd \sum_{l = m+1}^\infty q^{l - m - 1} \cd \| \xi \|_\infty 
= q^k \cd (1 - q)^{-2} \cd \| \xi \|_\infty  . \qedhere
\end{split}
\]
\end{proof}

\subsection{Proofs of main results}\label{ss:proofs}
We end this section by defining the quantum integral and proving our main results about it.

\begin{dfn}
We define the \emph{quantum integral} as the sum of the quantum vertical integral and the quantum horizontal integral:
\[
\int := \int^V + \int^H : X \to S_q^2  .
\]
\end{dfn}

\begin{proof}[Proof of Theorem \ref{t:fundam}]
This is an immediate consequence of Lemma \ref{l:quavertf} and Lemma \ref{l:quahorif}.
\end{proof}

\begin{proof}[Proof of Proposition \ref{p:continuity}]
This follows immediately from Lemma \ref{l:vertcontf} and Lemma \ref{l:horicont}.
\end{proof}

\section{The Podle\'s sphere as a spectral metric space}
We are now ready to prove the main result of this paper. We recall that the seminorm $L_{D_q} : S_q^2 \to [0,\infty]$ is coming from the D\polhk{a}browski-Sitarz spectral triple $( \C O(S_q^2), H_+ \op H_-, D_q)$ by taking commutators with the selfadjoint unbounded operator $D_q : \T{Dom}(D_q) \to H_+ \op H_-$. In fact, for any $x$ in the Lipschitz algebra $\T{Lip}_{D_q}(S_q^2) \su S_q^2$ we have the identity
\[
L_{D_q}(x) = \max\{ \| \pa_1(x) \|_\infty , \| \pa_1(x^*) \|_\infty \},
\]
see Section \ref{s:prelim} and Section \ref{s:spectral}. We are going to prove that $(S_q^2, L_{D_q})$ is a compact quantum metric space. The crucial step in this direction is contained in the following:

\begin{prop}\label{p:l-compact}
The subset
\[
\sL := \big\{ x \in S_q^2 \mid L_{D_q}(x) \leq 1 \, , \, \, \psi_\infty(x) = 0 \big\} \subseteq S_q^2
\]
is totally bounded.
\end{prop}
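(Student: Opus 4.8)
The plan is to use the ``fundamental theorem of calculus'' for $\pa_1$ (Theorem \ref{t:fundam}) together with the continuity estimate for the quantum integral (Proposition \ref{p:continuity}) in order to reduce the total boundedness of $\sL$ to the fibrewise total boundedness already established in Theorem \ref{t:0-totbou}.

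First I would record the basic properties of an element $x \in \sL$: by Lemma \ref{l:domain} we have $x \in \T{Lip}_{D_q}(S_q^2)$ with $\| \pa_1(x) \|_\infty \leq L_{D_q}(x) \leq 1$, and $\pa_1(x) \in X$ by Proposition \ref{p:totaldern}. Since $\psi_\infty(x) = 0$, Theorem \ref{t:fundam} gives $x = \int \pa_1(x)$. Boundedness of the quantum integral (equivalently, Proposition \ref{p:continuity} with $k = 0$, noting that $\chi_{[0,1]}(A) = 1_{S_q^2}$) then yields a uniform bound $\| x \| \leq M := 2(1-q)^{-2}$ valid for all $x \in \sL$, and, crucially, Proposition \ref{p:continuity} provides the uniform tail estimate
\[
\big\| x \cd \chi_{[0,q^{2N}]}(A) \big\| \leq q^N (N+2)(1-q)^{-2} \Q \T{for all } x \in \sL, \, N \in \nn \cup \{0\},
\]
whose right hand side tends to $0$ as $N \to \infty$.

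Next, since a subset of a Banach space which for every $\varepsilon > 0$ lies within distance $\varepsilon$ of some totally bounded set is itself totally bounded, it suffices to prove that for each fixed $N$ the ``head'' set $\{ x \cd (1 - \chi_{[0,q^{2N}]}(A)) \mid x \in \sL \}$ is totally bounded. Using the identity $1 - \chi_{[0,q^{2N}]}(A) = \sum_{k=0}^{N-1} \chi_{\{q^{2k}\}}(A)$ and the fact that a finite Minkowski sum of totally bounded sets is again totally bounded, this reduces to showing that for each fixed $k \in \nn \cup \{0\}$ the set $\{ x \cd \chi_{\{q^{2k}\}}(A) \mid x \in \sL \} \subseteq Y_k$ is totally bounded. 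Here I would invoke the Leibniz rule: since $\chi_{\{q^{2k}\}}(A) \in \T{Lip}_{D_q}(S_q^2)$ by Lemma \ref{l:derind} and the Lipschitz algebra is a $*$-algebra, we have $x \cd \chi_{\{q^{2k}\}}(A) \in \T{Lip}_{D_q}(S_q^2) \cap Y_k$ with
\[
\pa_1\big( x \cd \chi_{\{q^{2k}\}}(A) \big) = \pa_1(x) \cd \chi_{\{q^{2k}\}}(A) + x \cd \pa_1\big( \chi_{\{q^{2k}\}}(A) \big),
\]
so that $\big\| \pa_1\big( x \cd \chi_{\{q^{2k}\}}(A) \big) \big\|_\infty \leq 1 + M \cd \big\| \pa_1\big( \chi_{\{q^{2k}\}}(A) \big) \big\|_\infty =: R_k$, a constant independent of $x \in \sL$. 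Hence this set is contained in $R_k$ times $\big\{ y \in \T{Lip}_{D_q}(S_q^2) \cap Y_k \mid \| \pa_1(y) \|_\infty \leq 1 \big\}$, which is totally bounded by Theorem \ref{t:0-totbou}, and since rescaling preserves total boundedness the reduction is complete.

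The only place genuine analytic content enters is the tail estimate of the second paragraph, where Theorem \ref{t:fundam} is used to write $x = \int \pa_1(x)$ and Proposition \ref{p:continuity} controls the part of $x$ supported near the point $0 \in \T{Sp}(A)$; everything afterwards is bookkeeping with totally bounded sets. The points to be careful about are that the Leibniz identity for $\pa_1$ refers to the extension of $\pa_1$ to the Lipschitz algebra (an honest derivation, being the off-diagonal part of $d_q = [D_q, \cd]$), and that the uniform bound $\| x \| \leq M$ on $\sL$ is precisely what keeps the constants $R_k$ independent of $x$.
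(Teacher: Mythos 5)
Your proposal is correct and follows essentially the same route as the paper's proof: Theorem \ref{t:fundam} together with Proposition \ref{p:continuity} give the uniform bound and the uniform tail estimate, the Leibniz rule applied to the projections $\chi_{\{q^{2j}\}}(A)$ gives a uniform Lip-bound on each fiber component, and Theorem \ref{t:0-totbou} handles each fiber. The only difference is presentational bookkeeping: the paper assembles an explicit finite $\ep$-net from fiberwise nets, whereas you invoke standard stability properties of totally bounded sets (approximation within $\ep$, finite Minkowski sums, rescaling), which is equally valid.
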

\begin{proof}
Remark first that Theorem \ref{t:fundam} and Proposition \ref{p:continuity} implies that
\[
\| x \| = \| \int \pa_1(x) \| \leq \| \pa_1(x) \|_\infty \cd 2 \cd (1 - q)^{-2} \leq 2 \cd (1 - q)^{-2}
\]
for all $x \in \sL$. In particular, $\sL \subseteq S_q^2$ is a bounded subset. 

For each $j \in \nn \cup \{0\}$ define the constant
\[
C_j := 1 + 2 \cd (1 - q)^{-2} \cd  \big\| \pa_1( \chi_{\{q^{2j}\}}(A)) \big\| 
\]
and notice that
\[
\| \pa_1(x \cd \chi_{\{q^{2j}\}}(A)) \| \leq \| \pa_1(x) \|_\infty + \| x \| \cd \big\| \pa_1( \chi_{\{q^{2j}\}}(A)) \big\|
\leq C_j
\]
for all $x \in \sL$.

Let now $\ep > 0$ be given. By Proposition \ref{p:continuity} we may find a $k \in \nn$ such that
\[
\| x \cd \chi_{[0,q^{2k}]}(A) \| = \| (\int \pa_1(x) ) \cd \chi_{[0,q^{2k}]}(A) \|< \ep/2  , 
\]
for all $x \in \sL$.

By Theorem \ref{t:0-totbou} we may, for each $j \in \{0,\ldots,k-1\}$, find a finite number of elements $x_{j,1},\ldots, x_{j,n_j} \in S_q^2$ such that
\[
\big\{ y \in Y_j \cap \T{Lip}_{D_q}(S_q^2) \mid \| \pa_1(y) \| \leq 1 \big\}  \subseteq \bigcup_{i = 1}^{n_j} \B B_{\frac{\ep}{2 \cd C_j \cd k}} (x_{j,i}) ,
\]
where $\B B_r(z)$ denotes the norm-ball with radius $r > 0$ and center $z \in S_q^2$.

Define the finite subset
\[
S := \big\{ \sum_{j = 0}^{k-1} x_{j,i_j} \cd C_j \mid (i_0,\ldots,i_{k-1}) \in \{1,\ldots,n_0\} \ti \ldots \ti \{1,\ldots, n_{k-1}\} \big\}
\subseteq S_q^2  .
\]
We claim that
\begin{equation}\label{eq:inclubou}
\sL \subseteq \cup_{y \in S} \B B_\ep(y)  .
\end{equation}

Thus, let $x \in \sL$ be given. For each $j \in \{0,\ldots,k-1\}$ we may choose $i_j \in \{1,\ldots,n_j\}$ such that
\[
x \cd \chi_{\{q^{2j}\}}(A) \cd C_j^{-1} \in \B B_{\frac{\ep}{2 \cd C_j \cd k}} (x_{j,i_j})  .
\]
We thus have that
\[
\begin{split}
\| x - \sum_{j = 0}^{k-1} x_{j,i_j} \cd C_j \| 
& \leq \| x \cd \chi_{[0,q^{2k}]}(A) \| + \sum_{j = 0}^{k-1} \| x \cd \chi_{\{q^{2j}\}}(A)- x_{j,i_j} \cd C_j \| \\
& < \frac{\ep}{2} + k \cd \frac{\ep}{2 \cd k} = \ep  .
\end{split}
\]
This proves the inclusion in Equation \eqref{eq:inclubou} and hence the result of the theorem.
\end{proof}

\begin{prop}\label{p:l-kernel}
Let $x \in S_q^2$. We have the biimplication
\[
L_{D_q}(x) = 0 \lrar x = \psi_\infty(x) \cd 1_{S_q}  .
\]
\end{prop}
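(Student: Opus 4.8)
The plan is to prove both implications directly. The implication ``$\Leftarrow$'' is immediate: since $\psi_\infty(x) \cdot 1_{S_q^2}$ is a scalar multiple of the unit and $\pi$ restricts to a unital representation, we have $\pi(1_{S_q^2})(\T{Dom}(D_q)) \subseteq \T{Dom}(D_q)$ and $[D_q,\pi(1_{S_q^2})] = 0$, so $L_{D_q}(\psi_\infty(x)\cdot 1_{S_q^2}) = 0$ by Lemma \ref{l:Leibniz}. For the reverse implication ``$\Rightarrow$'', suppose $L_{D_q}(x) = 0$. By Lemma \ref{l:domain} this means $x \in \T{Lip}_{D_q}(S_q^2)$ and $d_q(x) = 0$; in particular $\pa_1(x) = 0$.

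The idea is then to feed this into the fundamental theorem for the quantum integral, Theorem \ref{t:fundam}. Since $x \in \T{Lip}_{D_q}(S_q^2)$, that theorem gives
\[
\int \pa_1(x) = x - \psi_\infty(x) \cdot 1_{S_q^2}.
\]
But $\pa_1(x) = 0$, so the left-hand side is $\int 0 = 0$ (the quantum integral is a bounded \emph{linear} operator, hence sends $0$ to $0$), and we immediately conclude $x = \psi_\infty(x) \cdot 1_{S_q^2}$. This is the entire argument; it is essentially a one-line consequence of the main structural result of the previous section.

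The only point requiring a small amount of care is that we must be sure $\pa_1(x) = 0$ follows from $L_{D_q}(x) = 0$ and not merely $\max\{\|\pa_1(x)\|_\infty, \|\pa_1(x^*)\|_\infty\} = 0$ — but those two conditions coincide by the displayed formula $L_{D_q}(x) = \max\{\|\pa_1(x)\|_\infty,\|\pa_1(x^*)\|_\infty\}$ recalled at the start of this section, which in turn rests on Lemma \ref{l:domain}. In particular $L_{D_q}(x)=0$ already forces $\|\pa_1(x)\|_\infty = 0$, i.e. $\pa_1(x)$ is the zero operator in $\B L(H_+,H_-)$, which is exactly the hypothesis needed to apply $\int$ to it. There is no real obstacle here: all the work has been front-loaded into the construction of the quantum integral and the proof of Theorem \ref{t:fundam}. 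The statement is then a clean corollary, and it is precisely the half of condition (3) of Definition \ref{d:Lcqms} that was noted as ``remaining to check'' in the discussion following Definition \ref{d:csms}.
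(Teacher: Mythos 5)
Your argument is correct and is exactly the paper's route: the paper proves this proposition as a direct consequence of Theorem \ref{t:fundam}, and your write-up simply makes explicit the same steps (Lemma \ref{l:domain} gives $x \in \T{Lip}_{D_q}(S_q^2)$ with $d_q(x)=0$, hence $\pa_1(x)=0$, and the quantum integral identity then forces $x = \psi_\infty(x)\cd 1_{S_q^2}$, while the converse is the trivial vanishing of commutators with the unit). Nothing further is needed.
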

\begin{proof}
This is a consequence of Theorem \ref{t:fundam}.
\end{proof}


\begin{thm}
The pair $(S_q^2,L_{D_q})$ is a compact spectral metric space.
\end{thm}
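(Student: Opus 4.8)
The plan is to verify the conditions of Definition~\ref{d:Lcqms} (equivalently Definition~\ref{d:csms}) by combining the structural results already assembled, and to invoke Rieffel's criterion in Theorem~\ref{t:Rieffel} for the hardest of them. As explained in the remark following Definition~\ref{d:csms}, since $L_{D_q}$ arises from the unital faithful spectral triple $(\C O(S_q^2),H_+\op H_-,D_q)$ of Theorem~\ref{t:dabsit}, it is automatically $*$-invariant, Leibniz, and lower semi-continuous, and $\cc\cd 1_{S_q^2}\su\T{ker}L_{D_q}$. So four things remain: density of $\T{dom}(L_{D_q})$, the reverse kernel inclusion $\T{ker}L_{D_q}\su\cc\cd 1_{S_q^2}$, the Leibniz property (already noted), and condition~(4), the metrization of the weak-$*$ topology.

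First I would dispose of density: the coordinate algebra $\C O(S_q^2)$ is contained in $\T{Lip}_{D_q}(S_q^2)\su\T{dom}(L_{D_q})$ by Theorem~\ref{t:dabsit}, and $\C O(S_q^2)$ is dense in $S_q^2$ by definition, so $\T{dom}(L_{D_q})$ is dense in $S_q^2$. Next, the kernel condition is exactly Proposition~\ref{p:l-kernel}: if $L_{D_q}(x)=0$ then, since $L_{D_q}(x)=\max\{\|\pa_1(x)\|_\infty,\|\pa_1(x^*)\|_\infty\}$, in particular $\pa_1(x)=0$, and Theorem~\ref{t:fundam} gives $x=\int\pa_1(x)+\psi_\infty(x)\cd 1_{S_q^2}=\psi_\infty(x)\cd 1_{S_q^2}\in\cc\cd 1_{S_q^2}$; conversely constants clearly have $L_{D_q}=0$. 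Thus $\T{ker}L_{D_q}=\cc\cd 1_{S_q^2}$.

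For condition~(4) I would apply Theorem~\ref{t:Rieffel}: having verified density, lower semi-continuity, $*$-invariance, and $\T{ker}L_{D_q}=\cc\cd 1_{S_q^2}$, it suffices to exhibit one state $\mu$ for which $\{x\in\T{dom}(L_{D_q})\mid L_{D_q}(x)\leq 1,\ \mu(x)=0\}$ is totally bounded for the $C^*$-norm. The natural choice is $\mu=\psi_\infty$, and then this set is precisely the set $\sL$ of Proposition~\ref{p:l-compact} (note $L_{D_q}(x)\leq 1$ forces $\|\pa_1(x)\|_\infty\leq 1$, which is what the proof of Proposition~\ref{p:l-compact} uses), which was shown there to be totally bounded. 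Hence Theorem~\ref{t:Rieffel} yields that $mk_{L_{D_q}}$ metrizes the weak-$*$ topology, so $(S_q^2,L_{D_q})$ is a compact quantum metric space; since it also comes from the spectral triple with unital faithful representation, it is a compact spectral metric space.

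The main obstacle in this argument is entirely concentrated in Proposition~\ref{p:l-compact}, and behind it Theorem~\ref{t:fundam} and Proposition~\ref{p:continuity}, i.e.\ the construction and estimates for the quantum integral $\int:X\to S_q^2$; the final theorem itself is a short assembly once those are in hand. One small point to be careful about is that $\psi_\infty$ is indeed a state on $S_q^2$ (it projects onto the span of the unit in the matrix-unit decomposition $S_q^2\cong\cc\cd 1\oplus\C K(\ell^2)$), so that Theorem~\ref{t:Rieffel}(2) applies verbatim; this is immediate from Proposition~\ref{p:comisom}.
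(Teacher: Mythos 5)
Your proof is correct and follows essentially the same route as the paper, whose own proof is just the one-line combination of Proposition~\ref{p:l-kernel}, Proposition~\ref{p:l-compact}, Theorem~\ref{t:Rieffel} and Theorem~\ref{t:dabsit}; you have simply spelled out the verification of density, the kernel condition, and the choice of the state $\psi_\infty$ that the paper leaves implicit.
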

\begin{proof}
A combination of Proposition \ref{p:l-kernel}, Proposition \ref{p:l-compact}, Theorem \ref{t:Rieffel} and Theorem \ref{t:dabsit} completes the proof.
\end{proof}

\bibliographystyle{plain}

\end{document}